\newtheorem{theorem}{Theorem}[section]
\newtheorem{lemma}{Lemma}[section]
\newtheorem{corollary}{Corollary}[section]
\theoremstyle{definition}
\newtheorem{definition}{Definition}[section]
\theoremstyle{remark}
\newtheorem{remark}{Remark}[section]
\numberwithin{equation}{section}
\newcommand{\absmod}[1]{\left|#1\right|}
\title{L.S.D. of sample covariances of superposition of matrices with separable covariance structure}
\author{
Javed Hazarika\\
Interdisciplinary Statistical Research Unit\\
Indian Statistical Institute, Kolkata\\
javarika@gmail.com
\and
Debashis Paul\\
Applied Statistics Unit\\
Indian Statistical Institute, Kolkata\\
\\
Department of Statistics, University of California\\
1 Shields Avenue, Davis, CA 95616, USA\\
debpaul.isi@gmail.com
}
\date{29/01/2026} 
\begin{document}
\maketitle
\begin{abstract}
 We study the asymptotic behavior of the spectra of matrices of the form $S_n = \frac{1}{n}XX^*$ where $X =\sum_{r=1}^K X_r$, where $X_r = A_r^\frac{1}{2}Z_rB_r^\frac{1}{2}$, $K \in \mathbb{N}$. Here, $\{Z_r: r\in [K]\}$ are $p\times n$ matrices containing standardized innovation entries. $\{A_r:r \in [K]\}$ and $\{B_r:r \in [K]\}$ are sequences of p.s.d. matrices of order $p$ and $n$, respectively, which are simultaneously digaonalizable within themselves. We establish the existence of a limiting spectral distribution (L.S.D.) for $S_n$ assuming that the joint spectral distributions of $\{A_r:r \in [K]\}$ and $\{B_r:r \in [K]\}$   converge to some $K$-dimensional distributions as $p,n\to \infty$ such that $p/n \to c \in (0,\infty)$. The L.S.D. of $S_n$  is characterized by a system of equations with unique solutions within the class of Stieltjes transforms of measures on $\mathbb{R}_+$. Our findings generalize existing results associated with the L.S.D. of sample covariance matrices when the data matrices have a separable covariance structure.
\end{abstract}
\noindent\textbf{Keywords: } Limiting spectral distribution; Random matrix theory; Stieltjes transform.
\section{Introduction}
There is now an extensive body of literature focusing on the limiting behavior of the spectra of sample covariance matrices corresponding to data matrices with a separable covariance structure under high dimensional asymptotic regimes. The earliest comprehensive work, in particular establishing the limiting spectral distribution (L.S.D.) of sample covariances, was by \cite{Lixin06}, which has been followed by \cite{PaulSilverstein2009},  \cite{SepCovar}, who covered various aspects of the limiting spectrum. More recently, 
CLTs for linear spectral statistics of sample covariances
have been established under a separable covariance model \cite{BaiLPseparable2019}.

Separable covariance models represent an interesting generalization of multivariate data with independent realizations. In particular, such models have been used
in spatio-temporal data analysis \cite{GentonK2015,LiGS2008}, where the row and column
covariances typically represent the spatial and temporal dimensions, respectively. The separable covariance structure in such settings implies that the pattern of temporal variations
remains the same across all spatial scales and frequencies, 
or locations. However, this restriction may be unrealistic
for various practical spatio-temporal processes, as a result of 
which a considerable effort has gone into formulating 
non-separable models for multivariate random fields.

To demonstrate the above, let $X \in \mathbb{C}^{p \times n}$ represent a random matrix with a separable covariance structure, i.e. $X = A^\frac{1}{2}ZB^\frac{1}{2}$ for some p.s.d. matrices $A \in \mathbb{C}^{p\times p}, B \in \mathbb{C}^{n \times n}$ and an innovation matrix $Z \in \mathbb{C}^{p\times n}$ with independent entries having zero mean and unit variance. Here, $A,B$ represent the population variance in the spatial and temporal components, respectively. The variance of the vectorized entries of the data matrix, i.e. $vec(X)$ is given by $\operatorname{Var}(vec(X)) = B \otimes A$.  The eigenvalues of $\operatorname{Var}(vec(X))$ are $\{a_ib_j;i \in [p],j \in [n]\}$ where $\{a_i\}_{i=1}^p,\{b_j\}_{j=1}^n$ are the eigenvalues of $A$ and $B$, respectively. This shows that the logarithm of the population eigenvalues are additive in the logarithm of the population eigenvalues of the spatial and temporal components. 

Let us now consider the generalized model $X = \sum_{r=1}^K X_r; X_r \in \mathbb{C}^{p \times n}$ with each $X_r$ having a separable covariance structure, i.e. $X_r = A_r^\frac{1}{2}Z_rB_r^\frac{1}{2}$. Due to independence across coordinates, the variance structure turns out to be $$\operatorname{Var}(vec(X)) = \sum_{r=1}^K B_r \otimes A_r.$$

Denote the eigenvalues of $A_r$ and $B_r$ as $\{a_i^{(r)}\}_{i=1}^p$ and $\{b_j^{(r)}\}_{j=1}^n$ , respectively. If we assume simultaneous diagonalizability among the $A_r$ and $B_r$ matrices separately, i.e. $A_rA_s = A_sA_r;\, B_rB_s = B_sB_r$, the eigenvalues of $\operatorname{Var}(vec(X))$ turn out to be $\{\sum_{r=1}^Ka_i^{(r)}\sigma_j^{(r)}; i \in [p], j \in [n]\}$. 

In this manuscript, we study the spectral behavior of the sample covariance matrices associated with data that may be viewed as a slice of a spatio-temporal process. This can be seen by thinking the rows of the data matrix as representing spatial coordinates and the columns indicating (regularly spaced) time points. The key development here is the extension of the separable covariance model for the data to partly overcome aforementioned limitation of a standard separable covariance model. In particular, we work under the setting where the rectangular $p\times n$ data matrix is a sum of a finite number of independent random matrices, each of which admits a separable covariance structure. We make a further structural assumption that the row-wise covariances of the component matrices are simultaneously diagonalizable, and the same is true for the column-wise covariances. The proposed model represents a generalization of the separable covariance model commonly used in spatio-temporal statistical modeling such as \cite{PaulSilverstein2009}, \cite{Kokoszka2017}, \cite{Bagchi2020}, \cite{Zimmerman2005}, \cite{Genton2006}, \cite{Genton2007}, \cite{GentonK2015}.  Under the separable covariance model (i.e., when $K=1$), the matrices $A_1$ and $B_1$ capture the spatial and temporal variability in the data, and the Kronecker product structure for the variance of $(\vec{X})$ implies that the spatial and temporal variability are independent of each other. This can be understood more clearly by carrying out spectral decompositions of the respective matrices. Especially in the case of Gaussian data, there is no loss of generality in assuming that the matrices $A_1$ and $B_1$ are diagonal with non-negative diagonal entries. In this case, it is obvious that the pattern of temporal variability is the same across spatial coordinates, up to multiplicative scale factors.

The extension to the data model under study, for the case $K>1$, thus breaks away from this rather restrictive implicit assumption. Indeed, depending on the choice of the matrices $\{A_r\}$ and $\{B_r\}$, the model allows for significant space-time interactions. For example, if the eigenvalues of $A_r$ concentrate on disjoint intervals for distinct $r$, then the corresponding data exhibits different patterns of temporal variability at different spatial scales. This is a much more realistic scenario for spatio-temporal data than what a separable model can cover. Therefore, the analysis carried out here has potential applications in capturing phenomena in the analysis of such data, if those phenomena can be quantified  through certain eigenvalue statistics. Furthermore, assuming that the proposed model provides a meaningful description of some spatio-temporal data, a natural question is the determination of the parameter $K$ (number of separable components). Finally, if the temporal dependence is assumed to be stationary, with then one may also be interested in estimating the corresponding spectral density function, and/or the distribution of eigenvalues of the spatial covariances. 
The work presented here provides a mathematical framework for addressing such questions.




We study the asymptotic properties of the spectra of the sample covariance matrix and its dual under the stated model, as the ratio $p/n$ converges to a finite, positive constant. Under this asymptotic regime and certain technical conditions on the generative model, we establish the existence of the L.S.D. of the sample covariance matrix. We further show that the Stieltjes transform of the L.S.D. of the covariance matrices of such classes of matrices can be expressed as a system of integral equations with a unique solution. This work therefore can be viewed as a stepping stone for developing statistical inference tools for spatio-temporal processes that generalize beyond separable covariance models.

The rest of the manuscript is organized as follows. Section \ref{sec:establishedResults} presents a few established results which will be utilized by us. Section \ref{sec:ModelSetup} shows the model setup and preliminaries. Section \ref{sec:MainResult} contains our main contribution which is summarized in Theorem \ref{mainTheorem}. The next section highlights a few properties through Theorem \ref{pointMassTheorem}, Corollary \ref{pointMassCorollary} and Theorem \ref{thm:Continuity}. Section \ref{sec:SpecialCases} contains a few special cases of the main result. In Theorem \ref{thm:RelaxingCommutativty} of Section \ref{sec:RelaxingCommutativity}, we show that the conclusions of our main result continue to hold under a few conditions which are a relaxation of the ones presented in Theorem \ref{mainTheorem}. Section \ref{sec:Simulations} contains some numerical simulations to validate our result.

\section{Some established results}\label{sec:establishedResults}

In this section we will state (without proof) a few established results which will be utilized throughout the manuscript. We state below Theorem 1 of \cite{GeroHill03} which characterizes the weak limit of a sequence of probability distribution functions in terms of their Stieltjes transforms.
\begin{theorem}\label{GeroHill}
Suppose that ($P_n$) are real Borel probability measures with Stieltjes transforms ($S_n$) respectively. If $\underset{n\rightarrow\infty}{\lim}S_n(z)=S(z)$ for all z with $\Im(z) > 0$, then there exists a Borel probability measure $P$ with Stieltjes transform $S_P=S$ if and only if 
$$\underset{y\rightarrow\infty}{\lim}\mathbbm{i}yS(\mathbbm{i}y)=-1$$ in which case $P_n\rightarrow P$ in distribution.
\end{theorem}

Lemma B.18 of \cite{BaiSilv09} establishes the following inequality between the Levy and uniform distance between two probability distribution functions.
  \begin{align}\label{Levy_vs_uniform}
      L(F, G) \leq ||F - G||_\infty.
  \end{align}

The \textbf{Grommer-Hamburger Theorem} from \cite{GohWimp} is an important result which states a condition for the limit of Stieltjes transforms to be a Stieltjes transform.
\begin{theorem}\label{Grommer}
    Let $\{\mu_n\}_{n=1}^\infty$ be a sequence of measures in $\mathbb{R}$ for which the total variation is uniformly bounded.
\begin{enumerate}
    \item If $\mu_n \xrightarrow{d} \mu$, then $S(\mu_n;z) \rightarrow S(\mu;z)$ uniformly on compact subsets of $\mathbb{C}\backslash\mathbb{R}$.
    \item If $S(\mu_n;z) \rightarrow  S(z)$ uniformly on compact subsets of $\mathbb{C}\backslash\mathbb{R}$, then S(z) is the Stieltjes transform of a measure on $\mathbb{R}$ and $\mu_n \xrightarrow{d} \mu$.
\end{enumerate}
\end{theorem}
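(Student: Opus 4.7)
\textbf{Plan for proof of Theorem \ref{Grommer}.} The two parts are handled by complementary compactness arguments, both resting on uniqueness of the Stieltjes transform.

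For Part (1), the strategy is to establish pointwise convergence on $\mathbb{C}\setminus\mathbb{R}$ and then upgrade it to local uniform convergence via a normal families argument. For any fixed $z$ with $\Im(z)\neq 0$, the map $x \mapsto (x-z)^{-1}$ is bounded and continuous on $\mathbb{R}$, so convergence in distribution of $\mu_n$ to $\mu$ directly yields $S(\mu_n;z) \to S(\mu;z)$. Setting $M := \sup_n |\mu_n|(\mathbb{R}) < \infty$, on any compact $K \subset \mathbb{C}\setminus\mathbb{R}$ with $d := \mathrm{dist}(K,\mathbb{R}) > 0$ one has the uniform bound $|S(\mu_n;z)| \le M/d$ for all $n$ and all $z \in K$. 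The family $\{S(\mu_n;\cdot)\}$ is holomorphic on $\mathbb{C}\setminus\mathbb{R}$ and locally uniformly bounded, hence normal by Montel's theorem. Normality together with pointwise convergence implies locally uniform convergence: any subsequence admits a locally uniformly convergent sub-subsequence whose limit, by pointwise convergence, must coincide with $S(\mu;\cdot)$, so the full sequence converges locally uniformly to $S(\mu;\cdot)$.

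For Part (2), the uniform total-variation bound places the sequence in a vaguely pre-compact set, so by Helly's selection theorem there is a subsequence $\mu_{n_k}$ converging vaguely to some Borel measure $\mu$ with $|\mu|(\mathbb{R}) \le M$. A brief truncation argument, exploiting the decay of $(x-z)^{-1}$ at infinity together with the uniform TV bound, upgrades vague convergence to convergence when integrated against $(x-z)^{-1}$, giving $S(\mu_{n_k};z) \to S(\mu;z)$ on $\mathbb{C}\setminus\mathbb{R}$. Combined with the hypothesis $S(\mu_{n_k};z) \to S(z)$, this identifies $S(z) = S(\mu;z)$ as a Stieltjes transform. For convergence of the full sequence, I would invoke the standard subsequence principle: any subsequence admits, via Helly, a further vaguely convergent sub-subsequence with limit $\mu'$; the same reasoning gives $S(\mu';z) = S(z) = S(\mu;z)$ throughout $\mathbb{C}\setminus\mathbb{R}$; and the Stieltjes inversion formula (uniqueness of the Nevanlinna--Herglotz representation) forces $\mu' = \mu$. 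Hence every subsequence has a further sub-subsequence converging (in distribution) to the same limit $\mu$, so the full sequence does as well.

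The main technical obstacle lies in Part (2): vague convergence a priori permits mass to escape to $\pm\infty$, and the Stieltjes transform restricted to compacts of $\mathbb{C}\setminus\mathbb{R}$ does not directly detect the behaviour at $z = \infty$. Fortunately, the theorem is phrased flexibly enough to allow such mass loss in the conclusion — the limit measure $\mu$ is only required to have bounded total variation, not the same total mass as the $\mu_n$. Thus the issue is primarily one of bookkeeping rather than of genuine analytic depth: the Stieltjes inversion formula pins down $\mu$ uniquely on the Borel $\sigma$-algebra of $\mathbb{R}$, and that uniqueness is precisely what allows the subsequential argument to cascade into convergence of the full sequence.
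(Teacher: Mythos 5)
The paper quotes this theorem from the appendix of \cite{GohWimp} explicitly \emph{without proof}, so there is no internal argument to compare yours against; what follows is a review of your proposal on its own terms.

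Your overall architecture for both parts is the standard one and essentially correct: bounded test functions plus Montel for Part~(1), and Helly plus Stieltjes inversion uniqueness plus the subsequence principle for Part~(2). One inconsistency should be cleaned up, however, and it concerns the mode of convergence denoted $\xrightarrow{d}$. In Part~(1) you justify pointwise convergence of the transforms by observing that $x\mapsto (x-z)^{-1}$ is bounded and continuous, which implicitly treats $\mu_n\xrightarrow{d}\mu$ as weak convergence, that is, convergence of $\int f\,d\mu_n$ for all bounded continuous $f$. In Part~(2), Helly yields only vague convergence of the extracted subsequence, and as you yourself note, mass can escape to infinity; indeed $\mu_n=\tfrac12\delta_0+\tfrac12\delta_n$ has $S(\mu_n;z)\to S(\tfrac12\delta_0;z)$ locally uniformly while $\mu_n\to\tfrac12\delta_0$ only vaguely, not weakly. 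So the $\xrightarrow{d}$ in the theorem must be vague convergence, not weak convergence; otherwise Part~(2) is simply false. Once you adopt that reading, the justification in Part~(1) needs a small repair: vague convergence does not in general control integrals of bounded continuous functions. The correct observation is that $(x-z)^{-1}\in C_0(\mathbb{R})$ (continuous and vanishing at infinity), and vague convergence combined with the uniform total-variation bound $M$ upgrades to convergence against all of $C_0$, via exactly the cutoff estimate you already sketch in Part~(2) — $|\int (x-z)^{-1}(1-\chi_R)\,d\mu_n|\le M\sup_{|x|\ge R}|(x-z)^{-1}|\to 0$ uniformly in $n$. Make that the stated reason in Part~(1) and the whole argument becomes consistent. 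The remaining ingredients — local uniform boundedness $|S(\mu_n;z)|\le M/\operatorname{dist}(K,\mathbb{R})$, Montel, Helly, uniqueness of the Stieltjes transform for finite measures, and metrizability of the vague topology on the ball of total variation $\le M$ so that the subsequence principle applies — are all correctly deployed.
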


The \textbf{Vitali-Porter Theorem} from Section 2.4 of \cite{joelschiff} is an essential tool that is useful particularly in situations where we need to establish convergence of a sequence of analytic functions (on $\mathbb{C}_+$) but are able to do it only when the imaginary component is large.
\begin{theorem}\label{VitaliPorter}
Let $\{f_n\}_{n=1}^\infty$ be a locally uniformly bounded sequence of analytic functions in a domain $\Omega$ such that $\underset{n \rightarrow \infty}{\lim}f_n(z)$ exists for each $z$ belonging to a set $E\subset \Omega$ which has an accumulation point in $\Omega$. Then $\{f_n\}_{n=1}^\infty$ converges uniformly on compact subsets of $\Omega$ to an analytic function.    
\end{theorem}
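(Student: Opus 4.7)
The plan is to combine Montel's theorem (which gives normality of the family from the local uniform boundedness hypothesis) with the identity theorem for analytic functions (which uses the accumulation-point hypothesis on $E$ to pin down the subsequential limit). This is the natural pairing: Montel supplies compactness in the topology of locally uniform convergence, while the identity theorem supplies uniqueness of the limiting object.

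First I would invoke Montel's theorem: since $\{f_n\}$ is locally uniformly bounded on the domain $\Omega$, the family is normal, so every subsequence of $\{f_n\}$ admits a further subsequence converging locally uniformly on $\Omega$ to a limit that is automatically analytic (by Weierstrass' theorem on uniform limits of analytic functions, or equivalently by Morera's theorem applied to the locally uniform limit). Call such a subsequential limit $f$.

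Next I would establish uniqueness of the subsequential limit. Suppose $\{f_{n_k}\}$ and $\{f_{m_j}\}$ converge locally uniformly on $\Omega$ to analytic limits $f$ and $g$ respectively. Since $\lim_{n \to \infty} f_n(z)$ exists for every $z \in E$, pointwise agreement on $E$ forces $f(z) = g(z)$ for all $z \in E$. Because $\Omega$ is a domain, and $E$ has an accumulation point lying in $\Omega$ itself, the identity theorem for analytic functions gives $f \equiv g$ on $\Omega$. Thus every locally uniform subsequential limit coincides with a single analytic function $f$ on $\Omega$.

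Finally I would upgrade subsequential convergence to convergence of the full sequence by a standard two-subsequence argument: if $f_n$ failed to converge to $f$ uniformly on some compact $K \subset \Omega$, there would exist $\varepsilon > 0$ and a subsequence $\{f_{n_k}\}$ with $\sup_{z \in K} |f_{n_k}(z) - f(z)| \geq \varepsilon$ for every $k$; applying Montel's theorem to this subsequence would extract a further subsequence converging locally uniformly, necessarily to $f$ by the uniqueness step, contradicting the lower bound on $K$. The main obstacle I anticipate is not any individual step but the careful application of the identity theorem, which critically depends on $\Omega$ being connected and on the accumulation point of $E$ belonging to $\Omega$ itself rather than merely to $\overline{\Omega}$; once that geometric hypothesis is in hand, the rest is routine assembly of classical complex analysis.
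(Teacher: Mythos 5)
Your proof is correct and follows the standard argument for the Vitali--Porter theorem: Montel's theorem supplies normality, the identity theorem (applied via the accumulation point of $E$ lying in the connected domain $\Omega$) forces all locally uniform subsequential limits to coincide, and the two-subsequence compactness argument then upgrades subsequential convergence of arbitrary subsequences to convergence of the full sequence. The paper states this result without proof, citing Section~2.4 of Schiff's text on normal families, so there is no internal proof to compare against; your argument is the classical one and handles the key hypotheses (connectedness of $\Omega$, accumulation point of $E$ in $\Omega$ rather than merely in $\overline{\Omega}$) correctly.
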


The \textbf{Earle Hamilton Theorem} from \cite{Harris} related to unique fixed points is of special interest to us since our main result involves existence of a unique solution to a system of integral equations.
\begin{theorem}\label{Earle}
     Let $\mathcal{D}$ be a nonempty domain in a complex Banach space $X$ and let $h:\mathcal{D} \rightarrow \mathcal{D}$ be a bounded holomorphic function. If $h(\mathcal{D})$ lies strictly inside $\mathcal{D}$, then $h$ has a unique fixed point in $\mathcal{D}$.
\end{theorem}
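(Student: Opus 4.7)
The plan is to reduce Earle--Hamilton to the Banach contraction principle, using the Carath\'eodory pseudodistance on $\mathcal{D}$ as the ambient metric. Define
\[
c_\mathcal{D}(x,y) = \sup\{\rho_\mathbb{D}(f(x), f(y)) : f : \mathcal{D} \to \mathbb{D} \text{ holomorphic}\},
\]
where $\mathbb{D}$ is the open unit disk in $\mathbb{C}$ and $\rho_\mathbb{D}$ is its hyperbolic distance. The defining supremum makes it immediate (via Schwarz--Pick) that every holomorphic self-map of $\mathcal{D}$ is non-expansive in $c_\mathcal{D}$; in particular $c_\mathcal{D}(h(x),h(y)) \le c_\mathcal{D}(x,y)$.

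The key, and hardest, step is upgrading non-expansion to a \emph{strict} contraction. The hypothesis that $h(\mathcal{D})$ lies strictly inside $\mathcal{D}$ yields $\varepsilon > 0$ such that the open $\varepsilon$-neighborhood of $h(\mathcal{D})$ is contained in $\mathcal{D}$. I would exploit this slack as follows: for any two distinct points $x,y \in \mathcal{D}$ and any holomorphic $f : \mathcal{D} \to \mathbb{D}$, the map $\lambda \mapsto f\bigl(h(x) + \lambda(h(x) - h(y))\bigr)$ extends, by the $\varepsilon$-slack, from $\mathbb{D}$ to a larger disk $r\mathbb{D}$ with radius $r > 1$ depending only on $\varepsilon$ and the norm-diameter of $h(\mathcal{D})$ (not on $x,y$). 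The Schwarz lemma applied to this extension gives a uniform factor $k = k(\varepsilon) < 1$ with $c_\mathcal{D}(h(x), h(y)) \le k\, c_\mathcal{D}(x,y)$.

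Next, iterate $x_{n+1} = h(x_n)$ from an arbitrary $x_0 \in \mathcal{D}$. Geometric decay $c_\mathcal{D}(x_n, x_{n+1}) \le k^n\, c_\mathcal{D}(x_0, x_1)$ follows from strict contraction. To convert this into norm-Cauchyness, I would pair with a suitable continuous linear functional $\ell : X \to \mathbb{C}$ mapping a neighborhood of $\overline{h(\mathcal{D})}$ into $\mathbb{D}$; such an $\ell$ shows that the Carath\'eodory pseudodistance locally dominates a positive multiple of the norm near $\overline{h(\mathcal{D})}$. Since the orbit lies in $h(\mathcal{D})$ for $n \ge 1$, it is Cauchy in norm, and the limit $x^*$ belongs to $\overline{h(\mathcal{D})} \subset \mathcal{D}$ by the strict-interior hypothesis. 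Continuity of $h$ forces $h(x^*) = x^*$, and uniqueness is immediate from the strict contraction.

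The main obstacle is the Schwarz--Pick amplification: establishing that the contraction factor $k$ can be chosen uniformly in $x,y$ on all of $\mathcal{D}$. This is delicate when $\mathcal{D}$ is unbounded or $h(\mathcal{D})$ has large norm-diameter, since the affine-line extension argument requires a $c_\mathcal{D}$-connected structure. In practice one handles this by restricting to the Carath\'eodory-connected component of a basepoint in $h(\mathcal{D})$, which suffices because the entire orbit lives there after a single application of $h$.
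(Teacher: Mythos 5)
The paper does not prove this statement; it is quoted as a cited preliminary (Theorem 3.1 of \cite{Harris}), so there is no in-paper proof to compare against. The review below is of your argument against the standard Earle--Hamilton proof that the citation points to.

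Your proposal sets up the right framework (Carath\'eodory pseudodistance, Schwarz--Pick non-expansion, upgrade to a strict contraction via the $\varepsilon$-slack, then a completeness argument), but the key amplification step is not correct as written. The map $\lambda \mapsto h(x) + \lambda\bigl(h(x)-h(y)\bigr)$ is not guaranteed to take values in $\mathcal{D}$ for $\lambda$ in the unit disk, let alone a disk of radius $r>1$: the $\varepsilon$-slack only tells you that the image lies in $\mathcal{D}$ when $|\lambda|\,\|h(x)-h(y)\| < \varepsilon$ (near $\lambda=0$, via $B(h(x),\varepsilon)$) or when $|\lambda+1|\,\|h(x)-h(y)\|<\varepsilon$ (near $\lambda=-1$, via $B(h(y),\varepsilon)$), and these two regions do not cover the whole segment, never mind a superdisk, once $\|h(x)-h(y)\|>2\varepsilon$. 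Even granting a convex $\mathcal{D}$ so the segment stays inside, getting a radius $r>1$ uniformly would require $\varepsilon$ to exceed the norm-diameter $\delta$ of $h(\mathcal{D})$, which fails in general. So the Schwarz-lemma shrinkage factor $k(\varepsilon)<1$ you want does not come out of this affine-line extension.

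The standard repair --- and the one Harris's proof actually uses --- replaces the one-dimensional affine extension by the holomorphic dilation \emph{self-map} of $\mathcal{D}$
\[
g_y(w) \;=\; h(w) + t\bigl(h(w)-h(y)\bigr) \;=\; (1+t)h(w) - t\,h(y),
\]
which maps $\mathcal{D}$ into $\mathcal{D}$ for any fixed $0<t<\varepsilon/\delta$, since $\|g_y(w)-h(w)\|\le t\delta<\varepsilon$. One then works with the infinitesimal Carath\'eodory--Reiffen--Finsler pseudometric $\alpha(x,v)=\sup\{|Df(x)v| : f:\mathcal{D}\to\mathbb{D} \text{ holomorphic}\}$ rather than the integrated pseudodistance directly. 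Non-expansion of $g_y$ gives $\alpha\bigl(g_y(y), Dg_y(y)v\bigr)\le\alpha(y,v)$; but $g_y(y)=h(y)$ and $Dg_y=(1+t)Dh$, so by homogeneity of $\alpha$ in the vector argument, $\alpha\bigl(h(y),Dh(y)v\bigr)\le(1+t)^{-1}\alpha(y,v)$. This is the uniform strict contraction at the infinitesimal level with factor $k=(1+t)^{-1}<1$, valid at every point of $\mathcal{D}$. Integrating along paths then gives the strict contraction of the integrated pseudodistance, after which the rest of your outline (norm-Cauchyness of the orbit via a suitable linear functional, limit in $\overline{h(\mathcal{D})}\subset\mathcal{D}$, uniqueness from contraction) is sound.
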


\section{Model and preliminaries}\label{sec:ModelSetup}
For $n \in \mathbb{N}$, we use the symbol $[n]$ to denote the set $\{1, \ldots, n\}$. $A_{\cdot j}$ refers to the $j^{th}$ column of the matrix $A$. Let $K \in \mathbb{N}$ be fixed. For each $r \in [K]$, suppose $\{Z_{n}^{(r)}\}_{n=1}^{\infty}$ are sequences of complex random matrices, each having dimension $p \times n$ such that $p/n \rightarrow c \in (0, \infty)$. The entries have zero mean, unit variance and \textit{satisfy some higher order moment conditions} to be stated explicitly later. Let $\{A_{n}^{(r)} \in \mathbb{C}^{p \times p}; r \in [K]\}_{n=1}^\infty$ and $\{B_n^{(r)} \in \mathbb{C}^{n \times n}; r \in [K]\}_{n=1}^\infty$ be sequences of random positive semi-definite matrices that commute among themselves and satisfy some mild limiting conditions. We are interested in the limiting behavior (as $p, n \rightarrow \infty$) of the ESDs of matrices of the type 
\begin{align}\label{defining_Sn}
S_n &= \frac{1}{n}X_nX_n^*;  \text{ where } X_n = \sum_{r=1}^KX_{n}^{(r)},\\ \notag
\text{ and, } 
X_{n}^{(r)} &= U_{n}^{(r)}Z_{n}^{(r)}V_{n}^{(r)}\,; U_n^{(r)} = (A_n^{(r)})^\frac{1}{2}\,; V_n^{(r)} = (B_n^{(r)})^\frac{1}{2}.
\end{align}
We use the method of Stieltjes Transforms to arrive at the non-random L.S.D. of such matrices under certain conditions. We define the following central objects of our analysis. Henceforth, the $Z_n^{(r)}$ matrices will be referred to as innovation matrices and $A_n^{(r)},B_n^{(r)}$ as scaling matrices.

\begin{definition}\label{def:defining_JESD}
For pairwise commuting p.s.d. matrices $M_1, \ldots, M_K \in \mathbb{C}^{p \times p}$, let $P$ be \textit{a} unitary matrix such that $M_r = P D_r P^*$ where $D_r = \operatorname{diag}(\lambda_1^{(r)}, \ldots, \lambda_p^{(r)})$. For $j \in [p]$, let $\boldsymbol{\lambda}_j := \{\lambda_{j}^{(1)}, \ldots, \lambda_{j}^{(K)}\}_{j=1}^p$, i.e. $\boldsymbol{\lambda}_j$ is the $K$-tuple consisting the $j^{th}$ eigenvalue (see Remark \ref{rem:JESD_is_well_defined}) from each of the $K$ coordinates. Let $\textbf{M} := (M_1, \ldots, M_K)$. The Joint Empirical Spectral Distribution (JESD) of $\textbf{M}$ is the probability measure on $\mathbb{R}_+^K$ that assigns equal mass to $\boldsymbol{\lambda}_j; j \in [p]$, i.e. 
\begin{align}\label{eqn:defining_JESD}
    \operatorname{JESD}(\textbf{M}) = \frac{1}{p}\sum_{j=1}^p \delta_{\boldsymbol{\lambda}_j}.
\end{align}
Let $\textbf{A}_n := (A_n^{(1)}, \ldots, A_n^{(K)})$ and $\textbf{B}_n := (B_n^{(1)}, \ldots, B_n^{(K)})$. Since these matrices commute among themselves, we define their JESDs below, similar to (\ref{eqn:defining_JESD}):
\begin{align}\label{eqn:defining_Hn_Gn}
    H_n := \operatorname{JESD}(\textbf{A}_n); \quad G_n := \operatorname{JESD}(\textbf{B}_n).
\end{align}

\end{definition}
\begin{remark}\label{rem:JESD_is_well_defined}
    Note that the choice of the unitary matrix $P$ in the spectral decomposition of the $K$ matrices is not unique. However, once we fix a $P$, the order of the $p$ eigenvalues within each of the $K$ coordinate gets fixed. However, $\operatorname{JESD}(\textbf{M})$ is independent of the choice of $P$ and is therefore well-defined.
\end{remark}

\begin{definition}
The dual of the Sample Covariance Matrix is defined as
\begin{align}\label{defining_dual}
    \tilde{S}_n := \frac{1}{n}X_n^*X_n.
\end{align}
\end{definition}
\begin{definition}\label{defining_Snj}
    $S_{nj} : = \frac{1}{n}\sum_{r \neq j}X_{\cdot r}X_{\cdot r}^*$ for $j \in [n]$.
\end{definition}

Additionally, for $z \in \mathbb{C}_+:= \{u+\mathbbm{i}v: v>0, u \in \mathbb{R}\}$, we define the following.
\begin{definition}\label{defining_Qz}
    $Q(z) := (S_n - zI_p)^{-1}$ is the resolvent of $S_n$.
\end{definition}
\begin{definition}\label{defining_Qzj}
    $Q_{-j}(z) := (S_{nj} - zI_p)^{-1}$ is the resolvent of $S_{nj}$ for $j \in [n]$.
\end{definition}

\begin{definition}\label{defining_Qz_tilde}
    $\tilde{Q}(z) := (\tilde{S}_n - zI_p)^{-1}$ is the resolvent of $\tilde{S}_n$.
\end{definition}

\begin{definition}\label{defining_s_n}
$s_n(z) := \frac{1}{p}\operatorname{trace}(Q(z))$ is the Stieltjes Transform of $F^{S_n}$.
\end{definition}

\begin{definition}\label{defining_hn}
$\textbf{h}_{n}(z) := (h_{1n}(z), \ldots, h_{Kn}(z))^T$; $h_{rn}(z) := \frac{1}{p}\operatorname{trace}\{A_{n}^{(r)}Q(z)\}, r \in  [K]$.
\end{definition}

\begin{definition}\label{defining_gn}
$\textbf{g}_{n}(z) := (g_{1n}(z), \ldots, g_{Kn}(z))^T$; $g_{rn}(z) := \frac{1}{p}\operatorname{trace}\{B_{n}^{(r)}\tilde{Q}(z)\}, r \in [K]$.
\end{definition}
\begin{remark}\label{h_rn_is_a_ST}
     Note that, $h_{rn}(\cdot)$ is the Stieltjes Transform of a measure. To see this, let $S_n = P_n\Lambda_nP_n^*$ be a spectral decomposition of $S_n$ and let $D_n^{(r)} = P_n^*A_n^{(r)}P_n$. Denote the diagonal elements of $D_n^{(r)}$ by $d_{jj}^{(r)}$ and denote the (real) eigenvalues of $S_n$ by $\lambda_j$ for $j \in [p]$. Then, observe that
     \begin{align}
         h_{rn}(z) =& \frac{1}{p}\operatorname{trace}(A_n^{(r)}Q(z)) =\frac{1}{p}\operatorname{trace}(A_n^{(r)}P_n(\Lambda_n - zI_p)^{-1}P_n^*)\\
         =& \frac{1}{p}\operatorname{trace}(D_n^{(r)}(\Lambda_n - zI_p)^{-1})
         = \frac{1}{p}\sum_{j=1}^p \frac{d_{jj}^{(r)}}{\lambda_j - z}. \notag
     \end{align}
The RHS is the Stieltjes Transform of the measure that allocates a mass of $d_{jj}^{(r)}/p$ to the point $\lambda_j^{(r)}$. Similar results hold also for $g_{rn}(\cdot)$.
\end{remark}
\section{L.S.D. under commutativity of scaling matrices}\label{sec:MainResult}

For a probability measure $\mu$ on $\mathbb{R}_+^K$, $z \in \mathbb{C}_+$ and $\textbf{p} \in \mathbb{C}_+^K$, we define the following complex valued vector function:
\begin{align}\label{defining_Operator_O}
    \textbf{O}(z, \textbf{p}, \mu) := \int \frac{\boldsymbol{\lambda}\mu(d\boldsymbol{\lambda})}{-z(1 + \boldsymbol{\lambda}^T\textbf{p})}.
\end{align}
Here, $\boldsymbol{\lambda}$ represents an arbitrary point in $\mathbb{R}_+^K$. $O_r(z, \textbf{p}, \mu)$ is the $r^{th}$ coordinate of $\textbf{O}(z, \textbf{p}, \mu)$.
\begin{theorem}\label{mainTheorem}
    Suppose the following conditions hold.
    \begin{enumerate}
        \item[\textbf{T1}] $c_n := p/n \rightarrow c \in (0, \infty)$.
        \item[\textbf{T2}] The entries of $Z_n^{(r)};r \in [K]$ are independent, have zero mean, unit variance and for some $\eta_0 > 0$, they satisfy the condition below:
        \begin{align*}
            \underset{i,j,r,n}{\sup}\,\mathbb{E}|z_{ij}^{(r)}|^{2 + \eta_0} \leq M_{2+\eta_0} < \infty.
        \end{align*}
        \item[\textbf{T3}] $A_{n}^{(r)}A_{n}^{(s)} = A_{n}^{(s)}A_{n}^{(r)}$ and $B_{n}^{(r)}B_{n}^{(s)} = B_{n}^{(s)}B_{n}^{(r)}$ for $r,s \in [K]$.
        \item[\textbf{T4}] $\{A_{n}^{(r)}, B_{n}^{(r)}: r \in [K]\}_{n=1}^\infty$ are sequences of p.s.d. matrices such that their JESDs (defined in \ref{eqn:defining_Hn_Gn}) satisfy 
        $H_n \xrightarrow{d} H$ a.s. and $G_n \xrightarrow{d} G$ a.s.  where $H,G$ are non-random probability distributions on $\mathbb{R}_+^K$ such that $H \neq \delta_\textbf{0}$, $G\neq \delta_\textbf{0}$ and none of the marginals of $H$ or $G$ is degenerate at $0$. 
        
        \item[\textbf{T5}]         
        There exists a constant $C_0>0$ such that $$\underset{n \rightarrow \infty}{\limsup} \underset{1 \leq r \leq K}{\max} \max\bigg\{\frac{1}{p}\operatorname{trace}(A_{n}^{(r)}), \frac{1}{n}\operatorname{trace}(B_{n}^{(r)})\bigg\} < C_0.$$
    \end{enumerate}
Then, $F^{S_n} \xrightarrow{d} F$ a.s. where $s_F$, the Stieltjes transform of $F$ at $z \in \mathbb{C}_+$ is given by
    \begin{align}\label{s_main_eqn}
        s_F(z) = \displaystyle \int_{\mathbb{R}_+^K} \frac{dH(\boldsymbol{\lambda})}{-z(1 + \boldsymbol{\lambda}^T\textbf{O}(z, c\textbf{h}(z), G))},
    \end{align}
    where, $\textbf{h}(z) = (h_1(z),\ldots, h_K(z))^T \in \mathbb{C}_+^K$ is the unique solution within the class of Stieltjes transforms of measures and satisfies
    \begin{align}\label{h_main_equation}
         \textbf{h}(z) &= \displaystyle \int_{\mathbb{R}_+^K} \frac{\boldsymbol{\lambda} dH(\boldsymbol{\lambda})}{-z(1 + \boldsymbol{\lambda}^T\textbf{O}(z, c\textbf{h}(z), G))}.
    \end{align}    
\end{theorem}

\begin{corollary}\label{corollary_dual_results}
    Under \textbf{T1-T5} of Theorem \ref{mainTheorem}, we have an alternate characterization for the Stieltjes Transform of the L.S.D. $F$ of $S_n$ given by
\begin{align}\label{s_alt_char_2}
    s_F(z) &= \frac{1}{c}\int \frac{dG(\boldsymbol{\theta})}{-z(1 +c\boldsymbol{\theta}^T\textbf{O}(z, \textbf{g}(z), H))} + \bigg(\frac{1}{c}-1\bigg)\frac{1}{z},
\end{align}
where $\textbf{g}(z) = (g_1(z), \ldots, g_K(z))^T \in \mathbb{C}_+^K$ is the unique solution within the class of Stieltjes transforms of measures and satisfies
\begin{align}\label{g_main_equation}
    \textbf{g}(z) &= \int \frac{\boldsymbol{\theta} dG(\boldsymbol{\theta})}{-z(1 + c\boldsymbol{\theta}^T\textbf{O}(z, \textbf{g}(z), H))}.
\end{align}
Moreover \textbf{h} (from \ref{h_main_equation}) and \textbf{g} satisfy the below:
\begin{align}\label{hg_recursive}
    \textbf{g}(z) = \textbf{O}(z, c\textbf{h}(z), G); \quad \textbf{h}(z) = \textbf{O}(z, \textbf{g}(z), H).
\end{align}
Yet another characterization of the Stieltjes Transform of the L.S.D. is given by 
\begin{align}\label{s_alt_char_1}
     s_F(z) =\frac{1}{z}\bigg(\frac{1}{c}-1\bigg) - \frac{1}{cz}\int \frac{dG(\boldsymbol{\theta})}{1 + c\boldsymbol{\theta}^T\textbf{h}(z)} = -z^{-1} - \textbf{h}^T(z)\textbf{g}(z).
\end{align}
\end{corollary}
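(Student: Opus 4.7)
The plan is to define $\textbf{g}$ so that the first half of (\ref{hg_recursive}) holds by fiat, and then to deduce every other claim by short algebraic manipulation from Theorem \ref{mainTheorem}. I set
$$\textbf{g}(z) := \textbf{O}(z, c\textbf{h}(z), G) = \int \frac{\boldsymbol{\theta}\,dG(\boldsymbol{\theta})}{-z\bigl(1 + c\boldsymbol{\theta}^T\textbf{h}(z)\bigr)}.$$
With this choice, (\ref{h_main_equation}) becomes $\textbf{h}(z) = \textbf{O}(z, \textbf{g}(z), H)$, which is the remaining half of (\ref{hg_recursive}); substituting this back into the integrand on the right-hand side of the claimed (\ref{g_main_equation}) returns $\textbf{O}(z, c\textbf{h}, G) = \textbf{g}$, so (\ref{g_main_equation}) is confirmed.

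Next I derive $s_F(z) = -z^{-1} - \textbf{h}^T(z)\textbf{g}(z)$, which is the second equality of (\ref{s_alt_char_1}). Starting from (\ref{s_main_eqn}) and using $\textbf{O}(z, c\textbf{h}, G) = \textbf{g}$ rewrites the main formula as $s_F(z) = \int dH(\boldsymbol{\lambda})/[-z(1 + \boldsymbol{\lambda}^T\textbf{g}(z))]$. Applying the elementary identity $\frac{1}{-z(1+a)} = -\frac{1}{z} + \frac{a}{z(1+a)}$ with $a = \boldsymbol{\lambda}^T\textbf{g}(z)$, integrating against $dH$, and recognising the remaining integral as $-z\textbf{O}(z, \textbf{g}, H) = -z\textbf{h}(z)$ yields the claim. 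For (\ref{s_alt_char_2}) (and the first equality in (\ref{s_alt_char_1})), the identical splitting applied to $\int dG(\boldsymbol{\theta})/[-z(1 + c\boldsymbol{\theta}^T\textbf{h}(z))]$ produces $-z^{-1} - c\textbf{h}^T\textbf{g}$; scaling by $1/c$ and adding $(\tfrac{1}{c}-1)z^{-1}$ recovers $-z^{-1} - \textbf{h}^T\textbf{g} = s_F(z)$, and pulling $-z$ out of the denominator converts this into the middle expression of (\ref{s_alt_char_1}).

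For uniqueness of $\textbf{g}$ among Stieltjes transforms of finite nonnegative measures, suppose $\hat{\textbf{g}}$ is any such solution of (\ref{g_main_equation}). I set $\hat{\textbf{h}}(z) := \textbf{O}(z, \hat{\textbf{g}}(z), H)$ and observe that (\ref{g_main_equation}) then says $\hat{\textbf{g}} = \textbf{O}(z, c\hat{\textbf{h}}, G)$, so plugging this back into the definition of $\hat{\textbf{h}}$ shows $\hat{\textbf{h}}$ satisfies (\ref{h_main_equation}). A direct check using the integral representation of $\textbf{O}$, combined with the fact that $\boldsymbol{\lambda}^T\hat{\textbf{g}}(z)$ inherits positive imaginary part from $\hat{\textbf{g}}$, verifies that $\hat{\textbf{h}}$ is again a vector of Stieltjes transforms of finite nonnegative measures, so the uniqueness clause of Theorem \ref{mainTheorem} gives $\hat{\textbf{h}} = \textbf{h}$ and hence $\hat{\textbf{g}} = \textbf{O}(z, c\textbf{h}, G) = \textbf{g}$.

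The one substantive matter not obtainable by pure algebra, and the step I expect to be the main obstacle, is identifying the constructed $\textbf{g}$ with the almost-sure limit of the empirical quantity $\textbf{g}_n(z)$ of Definition \ref{defining_gn}, so that the characterisation is not merely formal. This should be a by-product of the proof of Theorem \ref{mainTheorem}: $\textbf{g}_n(z)$ satisfies, up to the same $o(1)$ error terms controlled there, an approximate fixed-point relation $\textbf{g}_n(z) \approx \textbf{O}(z, c_n\textbf{h}_n(z), G_n)$, so the a.s.\ convergence $\textbf{h}_n \to \textbf{h}$ together with $c_n \to c$, the weak convergence $G_n \to G$, and continuity of $\textbf{O}$ in its last two arguments delivers $\textbf{g}_n(z) \to \textbf{g}(z)$ almost surely, pointwise on $\mathbb{C}_+$. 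Remark \ref{h_rn_is_a_ST} (applied to $g_{rn}$) then guarantees the limit is a vector of Stieltjes transforms of finite nonnegative measures, as required.
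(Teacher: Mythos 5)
Your proof is correct, and it takes a genuinely different route from the paper's. The paper obtains (\ref{g_main_equation}) and (\ref{s_alt_char_2}) by invoking the dual problem --- interchanging $(X,X^*)$ and $(p,n)$ and applying Theorem \ref{mainTheorem} to $\tilde S_n = X^*X/n$, then using the elementary relation between the ESDs of $XX^*/n$ and $X^*X/n$ --- and it relies on Theorem \ref{Uniqueness2} (which proves uniqueness separately for the map $\textbf{Q}_z$) for uniqueness of $\textbf{g}$. You instead take (\ref{hg_recursive}) as the starting point, define $\textbf{g}:=\textbf{O}(z,c\textbf{h},G)$, and derive (\ref{g_main_equation}), (\ref{s_alt_char_2}) and both equalities in (\ref{s_alt_char_1}) by a single partial-fraction manipulation applied to (\ref{s_main_eqn}). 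Your uniqueness argument is also more economical: instead of a second Earle--Hamilton application for $\textbf{Q}_z$, you map any purported solution $\hat{\textbf g}$ of (\ref{g_main_equation}) to $\hat{\textbf h}:=\textbf{O}(z,\hat{\textbf g},H)$, check it solves (\ref{h_main_equation}), and appeal to the uniqueness already proved for $\textbf h$. Both approaches buy the same conclusion; yours avoids the dual passage entirely and is, if anything, cleaner. The partial-fraction step for $s_F = -z^{-1} - \textbf h^T\textbf g$ is substantively the same ``few algebraic manipulations'' the paper carries out.

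One small imprecision worth fixing: when you check that $\hat{\textbf h}$ lies in the class of Stieltjes transforms, the property you need from $\hat{\textbf g}$ is not merely $\Im(\hat g_r(z))>0$, but the Nevanlinna--Herglotz inequality $\Im\bigl(z\,\hat g_r(z)\bigr)\ge 0$, which holds precisely because $\hat g_r$ is the Stieltjes transform of a measure supported on $\mathbb{R}_+$ (this is the same fact exploited by Lemma \ref{im_of_z_times_Or_positive}). Without it the sign of $\Im\bigl(-z(1+\boldsymbol{\lambda}^T\hat{\textbf g})\bigr)$ is not controlled when $\Re(z)\ne 0$. With this clarification the argument is airtight. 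Also, you are right that identifying the constructed $\textbf g$ with the a.s.\ limit of $\textbf g_n$ is a by-product of the proof of Theorem \ref{mainTheorem}: Theorems \ref{Existence_A12} and \ref{Existence_General} establish $g_{rn}(z)\xrightarrow{a.s.}g_r^\infty(z)$ directly, so the formal and probabilistic definitions coincide.
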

\begin{proof}
(\ref{hg_recursive}) is established through the proof of Theorem \ref{mainTheorem}. Assuming (\ref{s_main_eqn}), (\ref{h_main_equation}) hold, interchanging the roles of $(X,X^*)$, ($p,n$) and utilizing the relationship between the ESDs of $XX^*/n$ and $X^*X/n$, (\ref{g_main_equation}) and (\ref{s_alt_char_2}) are immediate.
 Finally, (\ref{s_alt_char_1}) follows from (\ref{h_main_equation}), (\ref{hg_recursive}) and a few algebraic manipulations as follows:
\begin{align}
    & z\sum_{r=1}^K h_rO_r = \int \frac{z - z - z\boldsymbol{\lambda}^T\textbf{O}}{-z - z\boldsymbol{\lambda}^T\textbf{O}}dH(\boldsymbol{\lambda})\\ \notag
    \implies& \sum_{r=1}^K h_r(z) \int \frac{\theta_rdG(\boldsymbol{\theta})}{1 + c\boldsymbol{\theta}^T\textbf{h}(z)} = zs_F(z) + 1 \text{, using } (\ref{s_main_eqn})\\ \notag
    \implies& s_F(z) = \frac{1}{z}\bigg(\frac{1}{c} - 1\bigg) -\frac{1}{cz}\int \frac{dG(\boldsymbol{\theta})}{1 + c\boldsymbol{\theta}^T\textbf{h}(z)} = -z^{-1} - \textbf{h}^T(z)\textbf{g}(z).
\end{align}
\end{proof}
\begin{remark}\label{rem:Deterministic_Scaling_Matrices}
    The assumptions on the scaling matrices $A_n^{(r)}, B_{n}^{(r)}; r \in [K]$ hold in an almost sure sense. Moreover, $H_n, G_n$ (defined in \ref{eqn:defining_JESD}) converge weakly to non-random limits $H$ and $G$ almost surely. We show that $F^{S_n}$ converges weakly to a non-random limit $F$ that depends on the scaling matrices only through their non-random limits $H$ and $G$. Therefore, we treat $\{A_n^{(r)}, B_{n}^{(r)}:r \in [K]\}_{n=1}^\infty$ as a non-random sequence.
\end{remark}
\subsection{Sketch of the proof}\label{ProofSketch}
First, we show that for all $z \in \mathbb{C}_+$, the system of equations spanned by (\ref{h_main_equation}), (\ref{g_main_equation}) and (\ref{hg_recursive}) can have at most one solution. This is done in Theorem \ref{Uniqueness2}. After this, we impose  a set of assumptions on $A_n^{(r)},B_n^{(r)}, Z_n^{(r)}$ similar to \cite{Lixin06}. This acts as a stepping stone to prove the result under general conditions of Theorem \ref{mainTheorem}. The assumptions are as follows.
\subsubsection{Assumptions}\label{A12}
\begin{enumerate}
    \item[\textbf{A1}]  There exists a constant $\tau> 0$ such that $\underset{n \in \mathbb{N}}{\operatorname{sup}}\, \underset{r \in [K]}{\max} \{||A_n^{(r)}||_{op},||B_n^{(r)}||_{op}\} \leq \tau$, and
    \item[\textbf{A2}] For $r \in [K]$, \,
    $\mathbb{E}z_{ij}^{(k)} = 0, \mathbb{E}|z_{ij}^{(k)}|^2 = 1, |z_{ij}^{(k)}| \leq n^b$, for some $\frac{1}{2+\eta_0}< b < \frac{1}{2}$ and $\eta_0$ is as per \textbf{T2}.
\end{enumerate}
Under \textbf{A1-A2}, the proof is done in the following steps.
\begin{itemize}
    \item[1] In Theorem \ref{CompactConvergence}, we show that for any $r \in [K]$, $\{h_{rn}(z), g_{rn}(z)\}_{n=1}^\infty$ have convergent subsequences. Assuming both sets of scaling matrices are diagonal, we construct deterministic equivalents for the resolvent of the sample covariance matrix and its dual. Using these, we show that any subsequential limit of $\{h_{rn}(z), g_{rn}(z)\}_{n=1}^\infty$ satisfies (\ref{h_main_equation}), (\ref{g_main_equation}) and (\ref{hg_recursive}), thus establishing existence and uniqueness. 
    \item[2] This unique solution ($\textbf{h}^\infty(z), \textbf{g}^\infty(z)$) when plugged into (\ref{s_main_eqn}) gives a function ($s^\infty(z)$) which satisfies the necessary and sufficient condition for a Stieltjes Transform of a probability measure. Let $F$ be the probability distribution characterized by $s^\infty(z)$. Theorem \ref{Existence_A12} shows that $s_n(z) \xrightarrow{a.s.} s^\infty(z)$ and thus $F^{S_n} \xrightarrow{d} F$ a.s.
    \item[3] When the scaling matrices are not diagonal, we construct an analog of $S_n$ with i.i.d. standard Gaussian innovations which bypasses the issue due to rotational invariance property. We use Lindeberg's principle to show that the difference between expected Stieltjes transforms arising from Gaussian and non-Gaussian innovations converges to zero.
\end{itemize}
The treatment of Theorem \ref{mainTheorem} under general conditions will be continued in Theorem \ref{Existence_General} which builds upon the results derived under \textbf{A1-A2}.
\begin{lemma}\label{im_of_z_times_Or_positive}
    Let $c,H,G$ be as in Theorem \ref{mainTheorem}. Let $z \in \mathbb{C}_+$ and $\textbf{h},\textbf{g} \in \mathbb{C}_+^K$. Then for any $r \in [K]$, we have $\Im(zO_r(z, c\textbf{h}, G)) \geq 0$ and $\Im(zO_r(z, \textbf{g}, H)) \geq 0$.
\end{lemma}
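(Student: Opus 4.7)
The plan is direct: use the definition of $\mathbf{O}$ to cancel the outer factor of $z$, then track imaginary parts through the resulting integrand.

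First I would compute $zO_r$ in closed form. By the definition in \eqref{defining_Operator_O},
\begin{equation*}
    z\,O_r(z,\mathbf{p},\mu) \;=\; z \int \frac{\lambda_r\,\mu(d\boldsymbol{\lambda})}{-z(1+\boldsymbol{\lambda}^T\mathbf{p})} \;=\; -\int \frac{\lambda_r\,\mu(d\boldsymbol{\lambda})}{1+\boldsymbol{\lambda}^T\mathbf{p}},
\end{equation*}
so the factor $z$ disappears and the sign problem reduces to analyzing $\Im\bigl(1/(1+\boldsymbol{\lambda}^T\mathbf{p})\bigr)$ for $\boldsymbol{\lambda}\in\mathbb{R}_+^K$. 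Applied with $\mathbf{p}=c\mathbf{h}$ and $\mu=G$, respectively $\mathbf{p}=\mathbf{g}$ and $\mu=H$, both claims will follow from a single elementary sign computation.

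Next I would verify that the denominator is both nonzero and has nonnegative imaginary part. For $\mathbf{p}\in\mathbb{C}_+^K$ (and also for $c\mathbf{p}$ with $c>0$) and $\boldsymbol{\lambda}\in\mathbb{R}_+^K$,
\begin{equation*}
    \Im(1+\boldsymbol{\lambda}^T\mathbf{p}) \;=\; \sum_{s=1}^K \lambda_s\,\Im(p_s) \;\geq\; 0,
\end{equation*}
with equality only when $\boldsymbol{\lambda}=\mathbf{0}$ (since $\Im(p_s)>0$ for every $s$), in which case the denominator equals $1$. So $1+\boldsymbol{\lambda}^T\mathbf{p}\neq 0$ throughout $\mathbb{R}_+^K$, and writing $w=1+\boldsymbol{\lambda}^T\mathbf{p}$ gives
\begin{equation*}
    \Im\!\left(\frac{1}{w}\right) \;=\; -\frac{\Im(w)}{|w|^2} \;\leq\; 0.
\end{equation*}

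Finally, I would integrate against the nonnegative factor $\lambda_r\,\mu(d\boldsymbol{\lambda})$ and negate. Since $\lambda_r\geq 0$ and $\mu$ is a probability measure,
\begin{equation*}
    \Im\bigl(z\,O_r(z,\mathbf{p},\mu)\bigr) \;=\; -\int \lambda_r\,\Im\!\left(\frac{1}{1+\boldsymbol{\lambda}^T\mathbf{p}}\right)\mu(d\boldsymbol{\lambda}) \;\geq\; 0.
\end{equation*}
Specializing to $(\mathbf{p},\mu)=(c\mathbf{h},G)$ and $(\mathbf{p},\mu)=(\mathbf{g},H)$ yields the two inequalities claimed. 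There is no substantial obstacle here; the only thing to be careful about is the harmless edge case $\boldsymbol{\lambda}=\mathbf{0}$, where the integrand is well defined because both numerator and denominator remain finite (denominator equals $1$, numerator equals $0$).
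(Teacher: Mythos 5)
Your proof is correct and follows essentially the same route as the paper's: cancel the outer $z$ from the definition of $\mathbf{O}$, observe that $\Im(1/(1+\boldsymbol{\lambda}^T\mathbf{p})) = -\Im(\boldsymbol{\lambda}^T\mathbf{p})/|1+\boldsymbol{\lambda}^T\mathbf{p}|^2 \le 0$ since $\boldsymbol{\lambda}\in\mathbb{R}_+^K$ and $\mathbf{p}\in\mathbb{C}_+^K$, then integrate against the nonnegative factor $\lambda_r$. Your additional remarks about the denominator being nonzero and the $\boldsymbol{\lambda}=\mathbf{0}$ edge case are harmless and correct, just slightly more verbose than the paper's one-line computation.
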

\begin{proof}
We observe that
    \begin{align*}
        \Im(zO_r(z, c\textbf{h}, G))
        = \int \frac{\theta_r \Im(\boldsymbol{\theta}^T\textbf{h})}{|1 + c\boldsymbol{\theta}^T\textbf{h}|^2}dG (\boldsymbol{\theta})
        = \sum_{s=1}^K \Im(\textbf{h}_s) \bigg(\int \frac{\theta_r\theta_s}{|1+\boldsymbol{\theta}^T\textbf{h}|^2} dG(\boldsymbol{\theta})\bigg) \geq 0.
    \end{align*}
    The other part follows similarly.
\end{proof}
\begin{lemma}\label{Lemma_Im_h_g_bound}
Let $z = u + \mathbbm{i}v;v>0$. Suppose $\textbf{h},\textbf{g}$ satisfies (\ref{h_main_equation}) and (\ref{g_main_equation}). Then for $r \in [K]$, $|h_r(z)| \leq C_0/v$ and $|g_r(z)| \leq C_0/v$.
\end{lemma}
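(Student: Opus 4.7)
The plan is to bound $|h_r(z)|$ and $|g_r(z)|$ directly from their integral representations (\ref{h_main_equation}) and (\ref{g_main_equation}) by extracting the imaginary part of the denominator. I will treat $h_r$ first; the bound for $g_r$ will follow by the same argument with the roles of $H, G$ and $c$ suitably swapped, using the corresponding half of Lemma \ref{im_of_z_times_Or_positive}.

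Starting from (\ref{h_main_equation}), the triangle inequality gives
\begin{equation*}
|h_r(z)| \;\le\; \int_{\mathbb{R}_+^K} \frac{\lambda_r\, dH(\boldsymbol{\lambda})}{\bigl|z\bigl(1 + \boldsymbol{\lambda}^T\mathbf{O}(z, c\mathbf{h}(z), G)\bigr)\bigr|}.
\end{equation*}
For every complex $w$ one has $|w| \ge \Im(w)$, so I would estimate the denominator by its imaginary part:
\begin{equation*}
\Im\bigl(z + z\boldsymbol{\lambda}^T\mathbf{O}(z, c\mathbf{h}(z), G)\bigr) \;=\; v \;+\; \sum_{r=1}^K \lambda_r\,\Im\bigl(z O_r(z, c\mathbf{h}(z), G)\bigr).
\end{equation*}
Here the first crucial input is Lemma \ref{im_of_z_times_Or_positive}, which yields $\Im(z O_r(z, c\mathbf{h}, G)) \ge 0$ for each $r$. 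Since $H$ is supported on $\mathbb{R}_+^K$ so that $\lambda_r \ge 0$, the sum above is nonnegative and the whole imaginary part is at least $v$. Consequently
\begin{equation*}
|h_r(z)| \;\le\; \frac{1}{v}\int_{\mathbb{R}_+^K} \lambda_r\, dH(\boldsymbol{\lambda}).
\end{equation*}

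It remains to control $\int \lambda_r\,dH(\boldsymbol{\lambda})$ by $C_0$. Hypothesis T4 gives $\limsup_n \tfrac{1}{p}\operatorname{trace}(A_n^{(r)}) = \limsup_n \int \lambda_r\, dF^{\mathbf{A}_n} < C_0$, and hypothesis T3 gives $F^{\mathbf{A}_n} \xrightarrow{d} H$. Since $\lambda_r$ is an unbounded continuous nonnegative function, I cannot quote weak convergence directly; instead I would truncate: for any $M > 0$, $\boldsymbol{\lambda} \mapsto \min(\lambda_r, M)$ is bounded and continuous, so by the Portmanteau theorem $\int \min(\lambda_r, M)\, dH = \lim_n \int \min(\lambda_r, M)\, dF^{\mathbf{A}_n} \le \limsup_n \int \lambda_r\,dF^{\mathbf{A}_n} \le C_0$. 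Letting $M \to \infty$ and applying monotone convergence gives $\int \lambda_r\, dH \le C_0$, proving $|h_r(z)| \le C_0/v$. The bound on $|g_r(z)|$ is obtained identically, invoking the second inequality of Lemma \ref{im_of_z_times_Or_positive} for $\Im(z O_r(z, \mathbf{g}(z), H)) \ge 0$ and the analogous moment bound $\int \theta_r\, dG \le C_0$ coming from T4 applied to $B_n^{(r)}$.

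The only mildly delicate step is the last one: upgrading the uniform trace bound under weak convergence to a bound on the first moment of the limit $H$ (resp.\ $G$), which needs the truncation-plus-monotone-convergence argument above because $\lambda_r$ is unbounded on $\mathbb{R}_+^K$. Everything else is purely a computation with imaginary parts of Stieltjes-type integrals and reduces to Lemma \ref{im_of_z_times_Or_positive}, which is already in hand.
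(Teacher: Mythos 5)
Your proof is correct and follows the same approach as the paper: bound the denominator by its imaginary part, invoke Lemma \ref{im_of_z_times_Or_positive} to show that part is at least $v$, and then use \textbf{T4} to bound $\int \lambda_r\,dH$ by $C_0$. The one place you add genuine content is the Portmanteau-plus-monotone-convergence argument transferring the trace bound of \textbf{T4} (a $\limsup$ over $n$) to the moment $\int\lambda_r\,dH$ of the limiting measure; the paper's proof simply cites \textbf{T4} at that step without elaboration, so your fill-in is a useful clarification rather than a divergence.
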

\begin{proof}
Let $r \in [K]$ and $z \in \mathbb{C}_+$. Then, using Lemma \ref{im_of_z_times_Or_positive} and \textbf{T5} of Theorem \ref{mainTheorem}, we get
    \begin{align*}
        |h_r(z)| \leq \int \frac{\lambda_r dH(\boldsymbol{\lambda})}{|-z -z\boldsymbol{\lambda}^T\textbf{O}(z, c\textbf{h}(z), G)|} \leq \int \frac{\lambda_r dH(\boldsymbol{\lambda})}{|\Im(z + z\boldsymbol{\lambda}^T\textbf{O}(z, c\textbf{h}(z), G))|} \leq \int\frac{\lambda_r dH(\boldsymbol{\lambda})}{v} \leq \frac{C_0}{v}.
    \end{align*}
    Similarly, we have $|g_r(z)| \leq C_0/v$.
\end{proof}

To establish the uniqueness of solutions of (\ref{h_main_equation}) and (\ref{g_main_equation}), for a fixed $z \in \mathbb{C}_+$, we define the functionals, $\textbf{P}_z$ and $\textbf{Q}_z$ depending on $z$ as follows:
\begin{align}\label{defining_Pz_Qz}
   \textbf{P}_z(\textbf{h}) := \int \frac{\boldsymbol{\lambda}dH(\boldsymbol{\lambda})}{-z(1 + \boldsymbol{\lambda}^T\textbf{O}(z,c\textbf{h},G))}; \quad \textbf{Q}_z(\textbf{g}) := \int \frac{\boldsymbol{\theta}dG(\boldsymbol{\theta})}{-z(1 + c\boldsymbol{\theta}^T\textbf{O}(z,\textbf{g},H))}.
\end{align}
\begin{theorem}\label{Uniqueness2}
        \textbf{(Uniqueness):} Under the conditions imposed on $H,G$ in Theorem \ref{mainTheorem}, for each $z \in \mathbb{C}_+$, there exists unique $\textbf{h},\textbf{g} \in \mathbb{C}_+^K$ such that $\textbf{P}_z(\textbf{h})=\textbf{h}$ and $\textbf{Q}_z (\textbf{g})=\textbf{g}$.
\end{theorem}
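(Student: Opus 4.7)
The approach is to take two hypothetical fixed points of $\textbf{P}_z$, couple each to an auxiliary $\textbf{g}$ via $\textbf{g}:=\textbf{O}(z,c\textbf{h},G)$, and reduce the problem to showing that a certain linear operator has spectral radius strictly less than $1$. The Earle-Hamilton theorem (Theorem \ref{Earle}) is an appealing candidate and could in principle be applied to the composite holomorphic map $(\textbf{h},\textbf{g})\mapsto(\textbf{O}(z,\textbf{g},H),\textbf{O}(z,c\textbf{h},G))$ on a bounded subdomain of $\mathbb{C}_+^{2K}$; however, verifying the ``strictly inside'' hypothesis demands quantitative lower bounds on the imaginary parts of the image, so I plan a direct Cauchy-Schwarz argument that leverages the imaginary-part identity forced by the fixed-point equations.

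Let $\textbf{h}^{(1)},\textbf{h}^{(2)}\in\mathbb{C}_+^K$ be two solutions of $\textbf{P}_z(\textbf{h})=\textbf{h}$, and set $\textbf{g}^{(i)}:=\textbf{O}(z,c\textbf{h}^{(i)},G)$, so that $\textbf{h}^{(i)}=\textbf{O}(z,\textbf{g}^{(i)},H)$ by the definition of $\textbf{P}_z$. Lemma \ref{Lemma_Im_h_g_bound} bounds $|h_r^{(i)}|\leq C_0/\Im z$. The elementary identity $(1+a)^{-1}-(1+b)^{-1}=(b-a)/[(1+a)(1+b)]$ applied to the integrals defining $\textbf{O}$ yields
\begin{align*}
\textbf{h}^{(1)}-\textbf{h}^{(2)}=\mathcal{A}\bigl(\textbf{g}^{(1)}-\textbf{g}^{(2)}\bigr),\qquad \textbf{g}^{(1)}-\textbf{g}^{(2)}=c\mathcal{B}\bigl(\textbf{h}^{(1)}-\textbf{h}^{(2)}\bigr),
\end{align*}
where $\mathcal{A}_{rs}=\int \lambda_r\lambda_s\,dH(\boldsymbol{\lambda})/\bigl[z(1+\boldsymbol{\lambda}^T\textbf{g}^{(1)})(1+\boldsymbol{\lambda}^T\textbf{g}^{(2)})\bigr]$ and $\mathcal{B}_{sr}$ is the analogous object with $G$ replacing $H$ and $c\textbf{h}^{(i)}$ replacing $\textbf{g}^{(i)}$. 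Substitution gives $\textbf{x}=c\mathcal{A}\mathcal{B}\textbf{x}$ with $\textbf{x}:=\textbf{h}^{(1)}-\textbf{h}^{(2)}$, so the task reduces to showing the spectral radius $\rho(c\mathcal{A}\mathcal{B})<1$; the analogous argument for $\textbf{Q}_z$ (using $\textbf{h}^{(i)}:=\textbf{O}(z,\textbf{g}^{(i)},H)$ as the auxiliary) is identical.

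The contraction is extracted from the imaginary-part identity forced by the equation: since $\Im(zg_r^{(i)})\geq 0$ by Lemma \ref{im_of_z_times_Or_positive},
\begin{align*}
\frac{\Im h_r^{(i)}}{\Im z}=\int \frac{\lambda_r\bigl(1+\boldsymbol{\lambda}^T\Im(z\textbf{g}^{(i)})/\Im z\bigr)}{\bigl|{-z(1+\boldsymbol{\lambda}^T\textbf{g}^{(i)})}\bigr|^2}\,dH(\boldsymbol{\lambda})\;\geq\;\int \frac{\lambda_r\,dH(\boldsymbol{\lambda})}{\bigl|{-z(1+\boldsymbol{\lambda}^T\textbf{g}^{(i)})}\bigr|^2},
\end{align*}
with the symmetric inequality on the $G$-side. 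Applying Cauchy-Schwarz entrywise to $\mathcal{A}$ and $\mathcal{B}$, substituting these bounds, and working in the weighted $\ell^2$ norm $\|\textbf{x}\|_w^2:=\sum_r|x_r|^2\cdot\Im z/(\Im h_r^{(1)}+\Im h_r^{(2)})$ (together with a symmetric weight on the $\textbf{g}$-side) produces a self-improving inequality $\|\textbf{x}\|_w^2\leq\kappa\|\textbf{x}\|_w^2$. The non-negative terms $\boldsymbol{\lambda}^T\Im(z\textbf{g}^{(i)})/\Im z$ discarded in the lower bound above reappear as a strict gap $1-\kappa>0$, whose positivity is guaranteed by the T3 non-degeneracy of $H$ and $G$ (neither is $\delta_{\textbf{0}}$ nor supported on a single coordinate axis). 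This forces $\textbf{x}=\textbf{0}$ and hence $\textbf{g}^{(1)}=\textbf{g}^{(2)}$.

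The principal obstacle is implementing the weighted Cauchy-Schwarz so that the discarded positive terms translate uniformly into the strict gap $\kappa<1$; the uniform modulus bound $|h_r^{(i)}|,|g_r^{(i)}|\leq C_0/\Im z$ from Lemma \ref{Lemma_Im_h_g_bound} supplies the quantitative control needed to keep the estimates uniform across all candidate fixed points.
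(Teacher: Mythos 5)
You take a genuinely different route from the paper. The paper's proof is short and abstract: it invokes the Earle--Hamilton fixed-point theorem (Theorem \ref{Earle}), checking via Lemma \ref{im_of_z_times_Or_positive}, Lemma \ref{Lemma_Im_h_g_bound} and Lemma \ref{PzQz_are_holomorphic} that $\textbf{P}_z$ and $\textbf{Q}_z$ are bounded holomorphic self-maps of $\mathbb{C}_+^K$ whose images sit strictly inside the domain, and then gets existence \emph{and} uniqueness of the fixed point in one stroke. You instead set up a Silverstein--Choi / Couillet--Hachem style contraction: difference two candidate fixed points, couple each to its $\textbf{g}^{(i)}$, derive $\textbf{x} = c\mathcal{A}\mathcal{B}\textbf{x}$ for $\textbf{x} = \textbf{h}^{(1)} - \textbf{h}^{(2)}$, and argue $\rho(c\mathcal{A}\mathcal{B}) < 1$ via Cauchy--Schwarz plus the imaginary-part identity. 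This route is more elementary, makes the quantitative mechanism behind uniqueness visible, and sidesteps the need to verify the ``strictly inside'' hypothesis of Earle--Hamilton (you even flag that difficulty as your reason for not going that way). The tradeoff is that your argument only delivers uniqueness; existence must still be supplied separately, as the paper does via subsequential limits of $\textbf{h}_n$. Your setup --- the identities $\textbf{h}^{(i)} = \textbf{O}(z, \textbf{g}^{(i)}, H)$, $\textbf{g}^{(i)} = \textbf{O}(z, c\textbf{h}^{(i)}, G)$, the kernels $\mathcal{A}, \mathcal{B}$, and the imaginary-part lower bound --- is correctly stated.

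That said, as written the proposal has a genuine gap: it stops exactly at the step that makes the argument a proof. You acknowledge that ``the principal obstacle is implementing the weighted Cauchy--Schwarz so that the discarded positive terms translate uniformly into the strict gap $\kappa < 1$,'' but that obstacle is not overcome in the text. What is missing is the following. Introduce the nonnegative symmetric kernels $L^{(i)}_{rs} := \int \lambda_r\lambda_s\,dH(\boldsymbol{\lambda})\big/\absmod{z(1+\boldsymbol{\lambda}^T\textbf{g}^{(i)})}^2$ and $K^{(i)}_{st} := \int \theta_s\theta_t\,dG(\boldsymbol{\theta})\big/\absmod{1+c\boldsymbol{\theta}^T\textbf{h}^{(i)}}^2$, together with $\ell^{(i)}_r := \int \lambda_r\,dH(\boldsymbol{\lambda})\big/\absmod{z(1+\boldsymbol{\lambda}^T\textbf{g}^{(i)})}^2$. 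Taking imaginary parts in the fixed-point equations gives the coupled system $\Im\textbf{h}^{(i)} = (\Im z)\boldsymbol{\ell}^{(i)} + L^{(i)}\Im(z\textbf{g}^{(i)})$ and $\Im(z\textbf{g}^{(i)}) = cK^{(i)}\Im\textbf{h}^{(i)}$, hence $(I - cL^{(i)}K^{(i)})\Im\textbf{h}^{(i)} = (\Im z)\boldsymbol{\ell}^{(i)}$ with componentwise-positive right-hand side (positivity uses T3; this is precisely where non-degeneracy of $H$ enters). This pins $\rho(cL^{(i)}K^{(i)}) < 1$ for each $i$. Cauchy--Schwarz inside the integrals then gives $\absmod{\mathcal{A}_{rs}} \leq |z|(L^{(1)}_{rs}L^{(2)}_{rs})^{1/2}$ and $\absmod{\mathcal{B}_{st}} \leq |z|^{-1}(K^{(1)}_{st}K^{(2)}_{st})^{1/2}$, and a second Cauchy--Schwarz on the finite sums interpolates the spectral radius of $c\mathcal{A}\mathcal{B}$ between $\rho(cL^{(1)}K^{(1)})$ and $\rho(cL^{(2)}K^{(2)})$, both strictly below $1$. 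Until this chain is actually carried through (it is clean when $K=1$ but needs care in the matrix case), the claimed contraction $\kappa<1$ is asserted rather than established.
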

\begin{proof}
Note that $\textbf{P}_z(\mathbb{C}_+^K) \subset \mathbb{C}_+^K$ and $\textbf{Q}_z(\mathbb{C}_+^K) \subset \mathbb{C}_+^K$ follow from Lemma \ref{im_of_z_times_Or_positive}. For fixed $z$, $\textbf{P}_z(\textbf{h}),\textbf{Q}_z(\textbf{g})$ are bounded holomorphic functions from Lemma \ref{im_of_z_times_Or_positive}, Lemma \ref{Lemma_Im_h_g_bound} and Lemma \ref{PzQz_are_holomorphic}. Thus $\textbf{P}_z(\mathbb{C}_+^K)$ and $\textbf{Q}_z(\mathbb{C}_+^K)$ are proper subsets of $\mathbb{C}_+^K$. Therefore, $\textbf{P}_z,\textbf{Q}_z$ satisfy all the conditions of Theorem \ref{Earle} thus proving the result.
\end{proof}

\begin{theorem}\label{CompactConvergence}
\textbf{(Compact convergence):} Fix $z \in \mathbb{C}_+$ and $r \in [K]$. Then any subsequence of $\mathcal{H}_r = \{h_{rn}(z): n \in \mathbb{N}\}$ and $\mathcal{G}_r = \{g_{rn}(z): n \in \mathbb{N}\}$ have further subsequences that converge uniformly in any compact subset of $\mathbb{C}_+$. Moreover, these limits are Stieltjes Transforms.
\end{theorem}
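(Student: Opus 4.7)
My plan is to combine a normal-families argument with the Grommer-Hamburger theorem. The starting observation is Remark \ref{h_rn_is_a_ST}: for each $n$ and $r \in [K]$, $h_{rn}$ is the Stieltjes transform of a positive measure $\mu_{h,r,n}$ supported on $\mathbb{R}_+$ whose total mass equals $\frac{1}{p}\operatorname{trace}(A_n^{(r)})$. An essentially identical computation (using a spectral decomposition of $\tilde{S}_n$) shows that $g_{rn}$ is the Stieltjes transform of a positive measure $\mu_{g,r,n}$ on $\mathbb{R}_+$ whose total mass is $\frac{1}{p}\operatorname{trace}(B_n^{(r)})$. By condition \textbf{T4} together with $p/n \to c \in (0,\infty)$, these total masses are almost surely bounded above by some constant $M$, uniformly in $n$ and $r$ for all sufficiently large $n$.

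Next, for any compact $K \subset \mathbb{C}_+$, let $v_0 := \inf_{z \in K}\Im z > 0$. The elementary Stieltjes transform bound $|S(\mu;z)| \leq \mu(\mathbb{R})/\Im z$ gives $|h_{rn}(z)|,|g_{rn}(z)| \leq M/v_0$ throughout $K$, so $\{h_{rn}\}$ and $\{g_{rn}\}$ are locally uniformly bounded sequences of analytic functions on $\mathbb{C}_+$. Given any subsequence $\{n_k\}$, I will fix a countable dense subset $\{z_i\}_{i \geq 1} \subset \mathbb{C}_+$ and apply Bolzano-Weierstrass together with a standard diagonal extraction to produce a further subsequence $\{n_{k_\ell}\}$ along which both $h_{rn_{k_\ell}}(z_i)$ and $g_{rn_{k_\ell}}(z_i)$ converge for every $i$. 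Since $\{z_i\}$ has accumulation points in $\mathbb{C}_+$, the Vitali-Porter Theorem (Theorem \ref{VitaliPorter}) promotes this pointwise convergence to uniform convergence on compact subsets of $\mathbb{C}_+$, furnishing analytic limits $h_r^\infty$ and $g_r^\infty$.

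The final step is to verify that $h_r^\infty$ and $g_r^\infty$ are themselves Stieltjes transforms. The conjugation symmetry $h_{rn}(\bar z) = \overline{h_{rn}(z)}$, immediate from the integral representation, extends the subsequential convergence to compact subsets of $\mathbb{C}_-$, hence to all of $\mathbb{C}\setminus\mathbb{R}$. Because the measures $\mu_{h,r,n_{k_\ell}}$ and $\mu_{g,r,n_{k_\ell}}$ have uniformly bounded total variation, the Grommer-Hamburger Theorem (Theorem \ref{Grommer}) then identifies each of $h_r^\infty$ and $g_r^\infty$ as the Stieltjes transform of a finite positive measure on $\mathbb{R}$; because the approximating measures are supported on $\mathbb{R}_+$ and $\mathbb{R}_+$ is closed, so is the weak limit. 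The main points needing care are (i) confirming that \textbf{T4} really delivers an almost-sure uniform mass bound across both $n$ and $r$, and (ii) arranging the diagonal extraction so that a single subsequence $\{n_{k_\ell}\}$ serves simultaneously for the $h_{rn}$ and the $g_{rn}$; neither is a substantive difficulty, but both must be stated cleanly so that Theorem \ref{Uniqueness2} can later be invoked on the limiting pair $(h_r^\infty, g_r^\infty)$.
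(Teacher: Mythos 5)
Your proposal is correct and follows essentially the same route as the paper: establish local uniform boundedness, invoke a normal-families argument to extract a locally uniformly convergent subsequence, then apply the Grommer--Hamburger theorem (with the total-variation bound coming from \textbf{T4}) to identify the limits as Stieltjes transforms. The only cosmetic difference is that you unroll Montel's theorem into a diagonal-extraction-plus-Vitali--Porter argument, and you are slightly more explicit about the $n/p$ correction for $g_{rn}$, the conjugate symmetry needed to pass from $\mathbb{C}_+$ to $\mathbb{C}\setminus\mathbb{R}$ before invoking Grommer--Hamburger, and the need to extract a single subsequence serving both $h_{rn}$ and $g_{rn}$ — all points the paper leaves implicit.
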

\begin{proof}
By \textit{Montel's Theorem} (Theorem 3.3 of \cite{SteinShaka}), it suffices to show that $\{h_{rn}(\cdot);r \in [K]\}$ is locally uniformly bounded in every compact subset of $\mathbb{C}_+$. Let $M \subset \mathbb{C}_+$ be an arbitrary compact set. Then, define $v_0 := \inf\{\Im(z): z \in M\}$. It is clear that $v_0 > 0$ by the compactness of $M$. By basic matrix inequalities, we have
    \begin{align}\label{h_rn_compact_bound}
        |h_{rn}(z)| = \absmod{\frac{1}{p}\operatorname{trace}(A_{n}^{(r)}Q(z))} \leq \frac{1}{p}|\operatorname{trace}(A_{n}^{(r)})| \times ||Q(z)||_{op} \leq \frac{C_0}{\Im(z)} \leq \frac{C_0}{v_0}.
    \end{align}
So, any subsequence of $\{\textbf{h}_n(\cdot)\}_{n=1}^\infty$ and $\{\textbf{g}_n(\cdot)\}_{n=1}^\infty$ have further subsequences which converge uniformly in any arbitrary compact subset $M \subset \mathbb{C}_+$. From Remark \ref{h_rn_is_a_ST} and Theorem \ref{Grommer}, the subsequential limits themselves must also be Stieltjes Transforms. Note that said theorem is applicable as the total variation norms of the underlying measures are uniformly bounded due to \textbf{T5} of Theorem \ref{mainTheorem}. The proof for $\mathcal{G}_r$ follows similarly.
\end{proof}
\begin{theorem}\label{DeterministicEquivalent}
\textbf{(Deterministic equivalent):} Under \textbf{A1-A2} and assuming $B_n^{(r)}$ are diagonal, for $z \in \mathbb{C}_+$, a deterministic equivalent for $Q(z)$ is given by 
\begin{align}
\Bar{Q}(z) = \bigg(-zI_p + \sum_{r=1}^K A_{n}^{(r)} \bigg(\frac{1}{n}\operatorname{trace}\{B_{n}^{(r)}[I_n + c_n\sum_{s=1}^K\mathbb{E}h_{sn}(z)B_{n}^{(s)}]^{-1}\}\bigg) \bigg)^{-1}.
\end{align}
\end{theorem}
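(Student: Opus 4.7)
The plan is to derive an approximate identity for $\mathbb{E}Q(z)$ from a resolvent expansion and recognize $\bar{Q}(z)$ as its solution. The diagonality of the $B_n^{(r)}$ is crucial here: it lets me write each column as $X_{\cdot j} = \sum_{r=1}^K (b_{jj}^{(r)})^{1/2} U_n^{(r)} Z_{\cdot j}^{(r)}$, so that the columns are independent across $j$ with conditional covariance $T_j := \sum_{r=1}^K b_{jj}^{(r)} A_n^{(r)}$ (using the cross-$r$ independence of \textbf{T2}). Combining the identity $Q(z)(S_n - zI_p) = I_p$ with the Sherman-Morrison formula gives $Q(z) X_{\cdot j} = \beta_j(z)\, Q_{-j}(z) X_{\cdot j}$, where $\beta_j(z) := (1 + n^{-1} X_{\cdot j}^* Q_{-j}(z) X_{\cdot j})^{-1}$, and hence
\begin{equation*}
I_p = -z\, Q(z) + \frac{1}{n}\sum_{j=1}^n \beta_j(z)\, Q_{-j}(z) X_{\cdot j} X_{\cdot j}^*.
\end{equation*}

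The heart of the proof is replacing $\beta_j(z)$ by its deterministic counterpart $1/\alpha_j(z)$, with $\alpha_j(z) := 1 + c_n \sum_{s=1}^K b_{jj}^{(s)} \mathbb{E}h_{sn}(z)$. This I would establish in three pieces: (i) a Hanson-Wright/Burkholder concentration for the quadratic form $n^{-1} X_{\cdot j}^* Q_{-j}(z) X_{\cdot j}$ around its conditional mean $n^{-1}\operatorname{trace}(T_j Q_{-j}(z))$, using the truncation \textbf{A2(iii)} and the a priori bound $\|Q_{-j}(z)\|_{op} \leq 1/\Im(z)$; (ii) a rank-one perturbation estimate $\|Q(z) - Q_{-j}(z)\|_{op} = O(1/(n\,\Im z))$ together with a martingale variance bound that allows passage from $Q_{-j}(z)$ to $\mathbb{E}Q(z)$ in trace functionals; and (iii) an analogous concentration replacing $X_{\cdot j} X_{\cdot j}^*$ by $T_j$ when tested against matrices of bounded spectral norm. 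Taking expectation of the expansion and collecting the small errors then produces the approximate identity
\begin{equation*}
-z\, \mathbb{E}Q(z) + \mathbb{E}Q(z) \cdot \frac{1}{n}\sum_{j=1}^n \frac{T_j}{\alpha_j(z)} = I_p + o(1),
\end{equation*}
with the error vanishing (in the trace-functional sense against bounded deterministic matrices) on compact subsets of $\mathbb{C}_+$.

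Since each $B_n^{(r)}$ is diagonal, so is $M_n(z) := I_n + c_n \sum_{s=1}^K \mathbb{E}h_{sn}(z) B_n^{(s)}$, with $j$-th diagonal entry exactly $\alpha_j(z)$. A direct inspection of diagonal entries then gives $n^{-1}\sum_{j=1}^n T_j / \alpha_j(z) = \sum_{r=1}^K A_n^{(r)} \cdot n^{-1}\operatorname{trace}(B_n^{(r)} M_n(z)^{-1})$, and inverting the approximate identity identifies $\mathbb{E}Q(z)$ with $\bar{Q}(z)$ up to negligible error, which is the asserted deterministic equivalence.

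The main obstacle will be the self-referential nature of the definition of $\bar{Q}(z)$: its coefficients $\alpha_j(z)$ depend on $\mathbb{E}h_{sn}(z) = p^{-1}\mathbb{E}\operatorname{trace}(A_n^{(s)} Q(z))$, which is in turn a functional of $Q(z)$ itself, so one must prevent the concentration errors from being amplified when fed back through the coefficients. I would close this loop by viewing the relation as a fixed-point equation on the vector $(\mathbb{E}h_{sn}(z))_{s=1}^K \in \mathbb{C}_+^K$ at fixed $z$ and invoking a contraction estimate in the spirit of the Earle-Hamilton argument used in the proof of Theorem \ref{Uniqueness2}, so that the final error is controlled uniformly on compact subsets of $\mathbb{C}_+$. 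A secondary technical point is that Hanson-Wright is applied under only the $4+\eta_0$ moment condition of \textbf{T2}; the truncation in \textbf{A2(iii)} with exponent $a\in(1/(4+\eta_0), 1/4)$ is designed precisely to yield the requisite tightness of tails.
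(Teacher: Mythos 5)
Your overall strategy is the same as the paper's: use the Sherman--Morrison identity to write the rank-one sum in terms of $Q_{-j}$, concentrate the resulting quadratic forms $n^{-1}X_{\cdot j}^* Q_{-j} X_{\cdot j}$ around their (conditional) traces, and exploit the diagonality of the $B_n^{(r)}$ to recognize the deterministic denominators $\alpha_j(z)$ as diagonal entries of $M_n(z)$. The paper packages this differently: it defines $F = \bar{Q}^{-1}$, applies the resolvent identity $Q - \bar{Q} = Q(F + zI - n^{-1}\sum_j X_{\cdot j}X_{\cdot j}^*)\bar{Q}$, tests against an arbitrary bounded deterministic $M_n$, and shows $\frac{1}{p}\operatorname{trace}\{(Q-\bar{Q})M_n\} \to 0$ a.s.\ by matching the two resulting terms. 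Your variant --- derive an approximate Dyson-type equation for $\mathbb{E}Q$ and then invert --- is legitimate, and the inversion step does go through because $\|\bar{Q}(z)\|_{op}\leq 1/\Im z$, so multiplying a trace-functionally-small error by $\bar{Q}$ keeps it small. The paper's version is slightly cleaner because it avoids inverting an approximate identity at all.

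The one substantive misstep is your final paragraph. There is no self-referential loop to close for this theorem. The quantity $\mathbb{E}h_{sn}(z) = p^{-1}\mathbb{E}\operatorname{trace}(A_n^{(s)}Q(z))$ is a well-defined deterministic scalar for each fixed $n$ and $z$, and $\bar{Q}(z)$ is simply \emph{defined} in terms of it; the theorem is a direct statement that a certain trace functional of $Q - \bar{Q}$ vanishes, not a fixed-point problem. The only step in the paper that touches $\mathbb{E}h_{sn}$ versus $h_{sn}$ is Lemma \ref{ConcentrationLemma}, a martingale-difference concentration bound --- no contraction or Earle--Hamilton argument appears, nor is one needed. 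Earle--Hamilton is invoked in the paper only for Theorem \ref{Uniqueness2}, to prove uniqueness of the solution to the \emph{limiting} integral equations, which is an entirely separate matter. Importing a fixed-point argument here would add machinery without buying anything and would obscure the fact that the deterministic equivalent is just a convergence statement about explicitly defined objects.
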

\begin{definition}
   For $r \in [K]$, we define the following quantities that serve as deterministic approximations to $h_{rn}(z)$.
   \begin{align}
       &\Tilde{h}_{rn}(z) := \frac{1}{p}\operatorname{trace}\{A_{n}^{(r)}\Bar{Q}(z)\} = \int \frac{\lambda_r dH_n(\boldsymbol{\lambda})}{-z(1 + \boldsymbol{\lambda}^T\textbf{O}(z, c_n\mathbb{E}\textbf{h}_n, G_n))},\label{defining_hn_tilde}\\
        &\Bar{\Bar{Q}}(z) := \bigg(-zI_p + \sum_{r=1}^K A_{n}^{(r)} \bigg(\frac{1}{n}\operatorname{trace}\{B_{n}^{(r)}[I_n + c_n\sum_{s=1}^K\tilde{h}_{sn}(z)B_{n}^{(s)}]^{-1}\}\bigg) \bigg)^{-1},\label{defining_QBarBar}\\
       &\Tilde{\Tilde{h}}_{rn}(z) := \frac{1}{p}\operatorname{trace}\{A_{n}^{(r)}\Bar{\Bar{Q}}(z)\} =  \int \frac{\lambda_r dH_n(\boldsymbol{\lambda})}{-z(1 + \boldsymbol{\lambda}^T\textbf{O}(z, c_n\tilde{\textbf{h}}_n, G_n))}.\label{defining_hn_tilde2}
   \end{align}
\end{definition}
\begin{remark}
    In the above definitions, the simplification of the tracial terms to the integral expressions is possible because of \textbf{T3} of Theorem \ref{mainTheorem}.
\end{remark}
\begin{theorem}\label{Existence_A12}\textbf{(Existence of solution under A1-A2):} Under \textbf{A1-A2} and assuming $A_n^{(r)}, B_n^{(r)}$ are diagonal, for $z \in \mathbb{C}_+$, $r \in [K]$ we have 
    \begin{enumerate}
       \item[1] $h_{rn}(z) \xrightarrow{a.s.} h_r^\infty(z)$ and $g_{rn}(z) \xrightarrow{a.s.} g_r^\infty(z)$ where $h_r^\infty(\cdot), g_r^\infty(\cdot)$ are Stieltjes Transforms,
        \item[2] $\{h_r^\infty(z)\}_{r=1}^K, \{g_r^\infty(z)\}_{r=1}^K$ uniquely satisfy (\ref{h_main_equation}), (\ref{g_main_equation}) and (\ref{hg_recursive}),
        \item[3] $s_n(z) \xrightarrow{a.s.} s^\infty(z)$ where $s^\infty$ is as defined in (\ref{s_main_eqn}), and
        \item[4] $s^\infty(\cdot)$ satisfies $\underset{y \rightarrow +\infty}{\lim}\mathbbm{i}ys^\infty(\mathbbm{i}y) = -1$.
    \end{enumerate}
\end{theorem}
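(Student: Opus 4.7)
\textbf{Proof plan for Theorem \ref{Existence_A12}.} The strategy is to prove parts 1 and 2 simultaneously by combining a concentration argument with the deterministic equivalent from Theorem \ref{DeterministicEquivalent}, then extracting subsequential limits via Theorem \ref{CompactConvergence} and invoking Theorem \ref{Uniqueness2} to upgrade to convergence along the full sequence. Concretely, decompose $h_{rn}(z) - \mathbb{E}h_{rn}(z)$ as a sum of martingale differences adapted to the filtration generated column-by-column by the innovation matrices $Z_n^{(1)},\ldots,Z_n^{(K)}$; using rank-one update identities for $Q(z)$, the truncation bound \textbf{A2}(iii), and a Burkholder-type inequality yields $h_{rn}(z) - \mathbb{E}h_{rn}(z) \to 0$ almost surely for each fixed $z$, with an analogous statement for $g_{rn}(z)$ via the dual. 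Theorem \ref{DeterministicEquivalent} gives $\mathbb{E}h_{rn}(z) - \Tilde{h}_{rn}(z) \to 0$, where $\Tilde{h}_{rn}$ is the integral expression in (\ref{defining_hn_tilde}) with $(c_n \mathbb{E}\textbf{h}_n, G_n)$ as parameters. Combining, $h_{rn}(z) - \Tilde{h}_{rn}(z) \to 0$ almost surely.

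Next, by Theorem \ref{CompactConvergence} and the Vitali--Porter Theorem \ref{VitaliPorter}, any subsequence of $\{(\textbf{h}_n, \textbf{g}_n)\}$ admits a further subsequence converging locally uniformly on $\mathbb{C}_+$ to a pair $(\textbf{h}^\infty, \textbf{g}^\infty)$ of Stieltjes transforms. Passing to the limit in the relation $h_{rn}(z) = O_r(z, c_n \mathbb{E}\textbf{h}_n, G_n) + o(1)$ and using \textbf{T1}, \textbf{T3}, bounded convergence (with integrands dominated via Lemma \ref{im_of_z_times_Or_positive}, and tails in $\lambda_r$ controlled uniformly via \textbf{T4}), one sees that $\textbf{h}^\infty$ satisfies (\ref{h_main_equation}); symmetrically $\textbf{g}^\infty$ satisfies (\ref{g_main_equation}). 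The cross-identity (\ref{hg_recursive}) falls out of the deterministic equivalent, since the inner trace in $\Bar{Q}(z)$ is exactly $O_r(z, c_n \mathbb{E}\textbf{h}_n, G_n)$ in the diagonal-$B_n^{(r)}$ setting, and passing to the limit gives $g_r^\infty(z) = O_r(z, c\textbf{h}^\infty(z), G)$; analogously from the dual construction $h_r^\infty(z) = O_r(z, \textbf{g}^\infty(z), H)$. Theorem \ref{Uniqueness2} then forces the subsequential limit to be unique, so the entire sequence converges almost surely, giving parts 1 and 2.

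For part 3, the same scheme applies to $s_n(z) = \frac{1}{p}\operatorname{trace}(Q(z))$: martingale concentration gives $s_n(z) - \mathbb{E}s_n(z) \to 0$ a.s., Theorem \ref{DeterministicEquivalent} yields $\mathbb{E}s_n(z) = \frac{1}{p}\operatorname{trace}(\Bar{Q}(z)) + o(1)$, and substituting the already-proved convergence $\mathbb{E}\textbf{h}_n(z) \to \textbf{h}^\infty(z)$ together with $H_n \xrightarrow{d} H$ produces $s_n(z) \to s^\infty(z)$ a.s., with $s^\infty$ as in (\ref{s_main_eqn}). For part 4, Lemma \ref{Lemma_Im_h_g_bound} forces $|h_r^\infty(\mathbbm{i}y)|, |g_r^\infty(\mathbbm{i}y)| \leq C_0/y$, so that $\textbf{O}(\mathbbm{i}y, c\textbf{h}^\infty(\mathbbm{i}y), G)$ has each component of order $O(1/y)$. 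Applying dominated convergence in (\ref{s_main_eqn}) then gives
\begin{align*}
    \mathbbm{i}y\, s^\infty(\mathbbm{i}y) = \int \frac{dH(\boldsymbol{\lambda})}{-(1 + \boldsymbol{\lambda}^T\textbf{O}(\mathbbm{i}y, c\textbf{h}^\infty(\mathbbm{i}y), G))} \longrightarrow -1
\end{align*}
as $y \to +\infty$, since the denominator tends to $-1$ for $H$-a.e.\ $\boldsymbol{\lambda}$ and is bounded away from zero (its modulus exceeds $1$ by the sign of the imaginary part).

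The main obstacle is the concentration step: the summands $X_n^{(r)}$ for different $r$ share column indices, so the columns of $X_n$ mix all $K$ innovation matrices, and the martingale decomposition must use a single filtration indexed by the common column index rather than by the individual $Z_n^{(r)}$; controlling the associated quadratic forms requires moment bounds from \textbf{A2} and careful exploitation of the truncation exponent $a \in (1/(4+\eta_0), 1/4)$. A second delicate point is interchanging limit and integral against $H_n$ in Step 2: the integrand carries a factor of $\lambda_r$ that is only pointwise, not uniformly, bounded, and the requisite uniform tightness of $\lambda_r\, H_n(d\boldsymbol{\lambda})$ is precisely what \textbf{T4} supplies. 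Once these two points are handled, the remaining ingredients, namely Theorems \ref{CompactConvergence}, \ref{VitaliPorter}, \ref{DeterministicEquivalent} and \ref{Uniqueness2}, interlock cleanly to yield all four claims.
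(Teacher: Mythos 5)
Your plan matches the paper's architecture closely for parts 1, 3 and 4 — martingale concentration for $h_{rn}-\mathbb{E}h_{rn}$ along a column-indexed filtration (the paper's Lemma \ref{ConcentrationLemma}), the deterministic equivalent to pin down $\mathbb{E}h_{rn}-\tilde h_{rn}\to 0$, Montel/Vitali--Porter for subsequential limits, and a pass to the limit against $H_n,G_n$. Your proof of part 4 is a genuinely different (and slightly cleaner) route than the paper's: you apply DCT directly in (\ref{s_main_eqn}) after noting $|1+\boldsymbol{\lambda}^T\mathbf{O}(\mathbbm{i}y,\cdot,\cdot)|\ge 1$ via Lemma \ref{im_of_z_times_Or_positive}, whereas the paper passes through the algebraic identity (\ref{s_alt_char_1}) and Lemma \ref{Lemma_Im_h_g_bound}. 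Both work.

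However, there is a real gap in your treatment of (\ref{hg_recursive}). You write that the cross-identity ``falls out of the deterministic equivalent, since the inner trace in $\bar Q(z)$ is exactly $O_r(z,c_n\mathbb{E}\mathbf{h}_n,G_n)$ \ldots and passing to the limit gives $g_r^\infty(z)=O_r(z,c\mathbf{h}^\infty(z),G)$.'' But the inner trace $\frac{1}{n}\operatorname{trace}\{B_n^{(r)}[I_n+c_n\sum_s\mathbb{E}h_{sn}B_n^{(s)}]^{-1}\}=-zO_r(z,c_n\mathbb{E}\mathbf{h}_n,G_n)$ is a deterministic function \emph{of $\mathbb{E}\mathbf{h}_n$}; its convergence to $-zO_r(z,c\mathbf{h}^\infty,G)$ tells you what the inner trace tends to, but says nothing about $g_{rn}(z)=\frac{1}{p}\operatorname{trace}(B_n^{(r)}\tilde Q(z))$, which is a different object built from the dual resolvent. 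Asserting the limit of the inner trace equals $g_r^\infty$ is precisely (\ref{hg_recursive}); you have not derived it, you have restated it. (The analogous statement in your parenthetical, ``$h_{rn}(z)=O_r(z,c_n\mathbb{E}\mathbf{h}_n,G_n)+o(1)$'', is also not the correct relation — it should be $h_{rn}=\tilde h_{rn}+o(1)$ where $\tilde h_{rn}$ is the $H_n$-integral, not $O_r$ itself — though I read this as a notational slip rather than a conceptual one.) The paper closes this gap with a separate contraction-in-large-$v$ argument for $\|\mathbf{x}^\infty-\mathbf{O}(z,\mathbf{y}^\infty,H)\|_1$ followed by the identity theorem. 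An alternative that would fit your framework: having proved $\mathbf{h}^\infty$ satisfies (\ref{h_main_equation}), set $\mathbf{g}':=\mathbf{O}(z,c\mathbf{h}^\infty,G)$, verify $\mathbf{g}'\in\mathbb{C}_+^K$ (using $\Im(zh_r^\infty)\ge 0$ and that $G$ is not axis-supported), and check directly that $\mathbf{Q}_z(\mathbf{g}')=\mathbf{g}'$ by substituting (\ref{h_main_equation}) in the form $\mathbf{h}^\infty=\mathbf{O}(z,\mathbf{g}',H)$; then Theorem \ref{Uniqueness2} forces $\mathbf{g}'=\mathbf{g}^\infty$. Either repair is needed; as written, part 2's cross-identity is unproved.
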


\subsection{Extending to non-diagonal scaling matrices} \label{sec:LindebergChatterjee}

Having established Theorem \ref{mainTheorem} assuming the scaling matrices $\{A_n^{(r)}, B_{n}^{(r)};r \in [K]\}_{n=1}^\infty$ are diagonal, we now consider the broader case when these matrices satisfy \textbf{T3} of Theorem \ref{mainTheorem}. For each $n \in \mathbb{N}$, there exists unitary matrices $P_n \in \mathbb{C}^{p \times p}$ and $Q_n \in \mathbb{C}^{n \times n}$ such that $A_n^{(r)} = P_n\Pi_{n}^{(r)}P_n^*; \, B_n^{(r)} = Q_n\Delta_{n}^{(r)}Q_n^*$ where $\Pi_n^{(r)}, \Delta_n^{(r)}$ are real diagonal matrices of orders $p$ and $n$ respectively. For completeness, we also define $\Omega_n^{(r)}:= (\Pi_n^{(r)})^\frac{1}{2}$ and $\Sigma_n^{(r)}:= (\Delta_n^{(r)})^\frac{1}{2}$ so that $U_n^{(r)} = P_n^*\Omega_n^{(r)}P_n^*$ and $V_n^{(r)} = Q_n\Sigma_n^{(r)}Q_n^*$. 

We construct an analog (say $T_n$) of $S_n$ where the innovations are i.i.d. Gaussian, which make them distribution-invariant under rotation. Let $W_{n}^{(r)} \in \mathbb{C}^{p \times n}$ be a sequence of matrices whose entries are i.i.d. complex standard Gaussian random variables. Replacing $Z_{n}^{(r)}$ with $W_{n}^{(r)}$ throughout in (\ref{defining_Sn})), let $Y_n := \sum_{r=1}^K U_n^{(r)}W_n^{(r)}V_n^{(r)}$ and noting that $P_n^*W_{n}^{(r)}Q_n \overset{d}{=} W_{n}^{(r)}$, we have
\begin{align}\label{defining_Tn}
    T_n := \frac{1}{n}Y_nY_n^* &= \frac{1}{n}\sum_{r,s=1}^K P_n\Omega_{n}^{(r)}(P_n^*W_{n}^{(r)}Q_n) \Sigma_{n}^{(r)}\Sigma_{n}^{(s)}(Q_n^*(W_{n}^{(s)})^*P_n)\Omega_{n}^{(s)}P_n^*\\ \notag
    &\overset{d}{=} P_n\bigg(\frac{1}{n}\sum_{r,s=1}^K \Omega_{n}^{(r)}W_{n}^{(r)}\Sigma_{n}^{(r)}\Sigma_{n}^{(s)}(W_{n}^{(s)})^*\Omega_{n}^{(s)}\bigg)P_n^*. \notag
\end{align}

For this step, we observe that we can without loss of generality, assume that the entries of $W_n^{(r)}$ are truncated at $n^b$ where $b$ is defined in \textbf{A2}. To see this, we define $\hat{T}_n$ as the analogous covariance matrix constructed with $\hat{W}_n^{(r)}$ instead of $W_n^{(r)}$. Here, $\hat{W}_n^{(r)}$ is obtained from $W_n^{(r)}$ by truncating the entries at $n^b$. So, we have
\begin{align}\label{defining_TnHat}
    \hat{T}_n := P_n\bigg(\frac{1}{n}\sum_{r,s=1}^K \Omega_{n}^{(r)}\hat{W}_{n}^{(r)}\Sigma_{n}^{(r)}\Sigma_{n}^{(s)}(\hat{W}_{n}^{(s)})^*\Omega_{n}^{(s)}\bigg)P_n^*.
\end{align}
Note that $F^{T_n} = F^{P_n^*T_nP_n}$. From Section \ref{Step10}, we have $$||F^{P_n^*T_nP_n} - F^{P_n^*\hat{T}_nP_n}|| \xrightarrow{a.s.} 0.$$ Therefore, using the fact that $F^{P_n^*\hat{T}_nP_n} = F^{\hat{T}_n}$ gives us $||F^{T_n} - F^{\hat{T}_n}|| \xrightarrow{a.s.} 0$. Here, we remark that we could not have shown this directly, i.e. simply replacing $W_n^{(r)}$ with $\hat{W}_n^{(r)}$ in (\ref{defining_Tn}) would not have worked since $P_n^*\hat{W}_{n}^{(r)}Q_n \overset{d}{=} \hat{W}_{n}^{(r)}$ does not hold as $\hat{W}_n^{(r)}$ is not spherically symmetric.

In Theorem \ref{Lindeberg_implementation} below, we use Lindeberg's principle to show that $\hat{T}_n$ and $\hat{S}_n$ approach the same weak limit (in expectation) under \textbf{A1-A2}.
\begin{definition}
    Let $N = 2Kpn$. For $r \in [K]$, $\mathcal{R}_r, \mathcal{I}_r$ are operators from $\mathbb{R}^N\rightarrow \mathbb{R}^{p \times n}$ that construct $2K$ many real matrices from a real vector \textbf{m}. $\mathcal{X}_r(\cdot)$ is an operator from $\mathbb{R}^N \rightarrow \mathbb{C}^{p \times n}$ that merges pairs of real matrices to form a complex matrix as defined below. In light of Remark \ref{rem:Deterministic_Scaling_Matrices}, $U_n^{(r)}, V_n^{(r)}$ will be assumed to be fixed.
    \begin{align*}
        \mathcal{R}_r(\textbf{m}) &:= [m_{(2r-2)pn + 1},\ldots, m_{(2r-1)pn}]_{p \times n}; \quad \mathcal{I}_r(\textbf{m}) := [m_{(2r-1)pn + 1},\ldots, m_{2rpn}]_{p \times n}.\\
        \mathcal{X}_r(\textbf{m}) &:= U_n^{(r)} \bigg(\mathcal{R}_r(\textbf{m}) + \mathbbm{i} \mathcal{I}_r(\textbf{m})\bigg)V_n^{(r)}; \quad \mathcal{X}(\textbf{m}) := \sum_{r=1}^K \mathcal{X}_r(\textbf{m}); \, \, \mathcal{S}(\textbf{m}) := \frac{1}{n}\mathcal{X}(\textbf{m})\mathcal{X}(\textbf{m})^*.
    \end{align*}
\end{definition}
\begin{theorem}\label{Lindeberg_implementation}
 With $\hat{S}_n$, $\hat{T}_n$ as defined in (\ref{defining_SnHatTnHat}), let $s_n(\cdot),t_n(\cdot)$ represent the respective Stieltjes transforms of their ESDs. Then for $z \in \mathbb{C}_+$, we have
  \begin{align*}
      |\mathbb{E}\hat{t}_n(z) - \mathbb{E}\hat{s}_n(z)| \rightarrow 0.
  \end{align*}
\end{theorem}
The proof can be found in Section \ref{sec:Proof_Lindeberg_Implementation}.

\subsection{Existence of Solution under general conditions}

Finally we generalize the result to the case when $H$ and $G$ have potentially unbounded support on $\mathbb{R}_+^K$ and the innovations satisfy \textbf{T2} of Theorem \ref{mainTheorem}. Consider the random vector $\textbf{z} = (z_1, \ldots, z_K) \sim H$. For $\tau > 0$, define $H^{\tau}(t)$ to be the distribution function of $\textbf{z}^\tau = (z_1^\tau, \ldots, z_K^\tau)$ where $z_r^\tau = z_r\mathbbm{1}_{\{z_r \leq \tau\}} + \tau\mathbbm{1}_{\{z_r > \tau\}}$ for $r \in [K]$. By Theorem \ref{Existence_A12}, for every $\tau>0$, there exists unique $(\textbf{h}^\tau, \textbf{g}^\tau)$ that completely characterize the L.S.D. in terms of integral equations involving $H^\tau$ and $G^\tau$ as described in Theorem \ref{mainTheorem}. Does ($\textbf{h}^\tau, \textbf{g}^\tau$) achieve a meaningful limit as $\tau \rightarrow \infty$? The next theorem answers this question.

\begin{theorem}\label{Existence_General}
\textbf{(Existence of solution -- general):} Under the conditions of Theorem \ref{mainTheorem}, for $z \in \mathbb{C}_+$, $r \in [K]$, we have
    \begin{enumerate}
       \item[1] $h_r^\tau(z) \xrightarrow{a.s.} h_r^\infty(z)$ and $g_r^\tau(z) \xrightarrow{a.s.} g_r^\infty(z)$ where $h_r^\infty(\cdot), g_r^\infty(\cdot)$ are Stieltjes Transforms,
        \item[2] $\{h_r^\infty(z)\}_{r=1}^K, \{g_r^\infty(z)\}_{r=1}^K$ uniquely satisfy (\ref{h_main_equation}), (\ref{g_main_equation}) and (\ref{hg_recursive}),
        \item[3] $s_n(z) \xrightarrow{a.s.} s^\infty(z)$ where $s^\infty$ is as defined in (\ref{s_main_eqn}), and
        \item[4] $s^\infty(\cdot)$ satisfies $\underset{y \rightarrow +\infty}{\lim}\mathbbm{i}ys^\infty(\mathbbm{i}y) = -1$.
    \end{enumerate}
\end{theorem}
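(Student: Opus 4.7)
My strategy is a double truncation that reduces the general case to the setting already handled in Theorem~\ref{Existence_A12}. As in the preamble to the statement, let $H^\tau,G^\tau$ denote the coordinate truncations of $H,G$ at level $\tau$, and let $A_n^{\tau,(r)},B_n^{\tau,(r)}$ be the matrices obtained by clipping the eigenvalues of $A_n^{(r)},B_n^{(r)}$ at $\tau$ in their common eigenbases, which exist by \textbf{T5}. Independently, truncate every entry of $Z_n^{(r)}$ at $n^a$ and recentre/rescale so as to satisfy~\textbf{A2}; by the standard Bai--Silverstein argument built on the uniform $(4+\eta_0)$-moment hypothesis~\textbf{T2}, this modification perturbs the Stieltjes transform $s_n(z)$ by $o_{\mathrm{a.s.}}(1)$ uniformly in $\tau$. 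Under both truncations, \textbf{A1--A2} hold and Theorem~\ref{Existence_A12} supplies deterministic limits $s^{\infty,\tau}(z),h_r^{\infty,\tau}(z),g_r^{\infty,\tau}(z)$ solving (\ref{h_main_equation})--(\ref{hg_recursive}) driven by $H^\tau$ and $G^\tau$.

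The key analytic step is to pass $\tau\to\infty$ inside those integral equations. Since \textbf{T4} furnishes the $\tau$-independent bound $\int \lambda_r\,dH^\tau \le \int \lambda_r\,dH \le C_0$, and similarly for $G^\tau$, the proof of Lemma~\ref{Lemma_Im_h_g_bound} yields $|h_r^{\infty,\tau}(z)|,|g_r^{\infty,\tau}(z)| \le C_0/\Im(z)$ uniformly in $\tau$. Hence $\{h_r^{\infty,\tau}\}_\tau$ and $\{g_r^{\infty,\tau}\}_\tau$ are normal families on $\mathbb{C}_+$, and Theorems~\ref{VitaliPorter} and~\ref{Grommer} extract a subsequence converging locally uniformly to Stieltjes transforms $h_r^\infty,g_r^\infty$. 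Lemma~\ref{im_of_z_times_Or_positive} gives $|{-z(1+\boldsymbol\lambda^T \textbf{O})}|\ge \Im(z)$, so the integrand in (\ref{h_main_equation}) is dominated by $\lambda_r/\Im(z)$; dominated convergence, combined with $H^\tau \Rightarrow H$ and uniform integrability of $\lambda_r$ under $H^\tau$ (from \textbf{T4}), lets me pass to the limit in the integral system. The limiting pair $(\textbf{h}^\infty,\textbf{g}^\infty)$ solves the system driven by $H,G$, and Theorem~\ref{Uniqueness2} forces both uniqueness and convergence of the full family, not merely the extracted subsequence.

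It remains to connect $s_n$ (built from the original scaling matrices) to $s^{\infty,\tau}$. Let $S_n^\tau$ be the sample covariance assembled with the truncated $A_n^{\tau,(r)},B_n^{\tau,(r)}$ and truncated innovations. Decomposing $X_n-X_n^\tau$ into telescoping terms and applying Markov's inequality with \textbf{T4}, namely $\#\{j:\lambda_j^{(r)}>\tau\}\le pC_0/\tau$ and its $B$-analogue, yields $\operatorname{rank}(S_n-S_n^\tau)\le \kappa(p+n)/\tau$ for some $\kappa=\kappa(K,C_0)$. The standard rank inequality (together with (\ref{Levy_vs_uniform})) then gives $\|F^{S_n}-F^{S_n^\tau}\|_\infty \le \kappa'/\tau$, whence
\begin{align*}
|s_n(z)-s_n^\tau(z)|\le \frac{\kappa'}{\tau\,\Im(z)}+o_{\mathrm{a.s.}}(1),
\end{align*}
uniformly in $n$. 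Combining this with $s_n^\tau\to s^{\infty,\tau}$ a.s.\ and $s^{\infty,\tau}\to s^\infty$ locally uniformly delivers $s_n(z)\to s^\infty(z)$ a.s., establishing items~1--3. The Stieltjes-inverse normalisation $\lim_{y\to\infty}\mathbbm{i}y\,s^\infty(\mathbbm{i}y)=-1$ transfers from each $s^{\infty,\tau}$, which enjoys the property by Theorem~\ref{Existence_A12}, because the bounds justifying dominated convergence are uniform in $\tau$.

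The principal obstacle I anticipate is the limit interchange inside the integral system: weak convergence $H^\tau\Rightarrow H$ alone is insufficient for integrands involving the unbounded factor $\lambda_r$, so the first-moment control provided by~\textbf{T4} is indispensable. A secondary bookkeeping matter is verifying that the rank perturbation bound between $S_n$ and $S_n^\tau$ really is $o(p)$ with a prefactor that vanishes as $\tau\to\infty$; this reduces to careful use of Weyl's inequality and the trace bound of~\textbf{T4}, but does not require new ideas beyond those already in the proof of Theorem~\ref{Existence_A12}.
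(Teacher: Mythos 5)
Your proposal is well organized and correctly identifies several of the moving parts (truncation of the scaling matrices, truncation of the innovation entries, normal-family/Vitali--Porter extraction of limits $h_r^\infty,g_r^\infty$, dominated convergence under the first-moment control from \textbf{T4}, and the rank-based comparison of $F^{S_n}$ with $F^{S_n^\tau}$). However, there is a genuine gap at the central step: you invoke Theorem~\ref{Existence_A12} directly for the doubly truncated model, but Theorem~\ref{Existence_A12} is stated and proved only under the additional hypothesis that $A_n^{(r)}$ and $B_n^{(r)}$ are \emph{diagonal}. After clipping eigenvalues in the common eigenbases, the matrices $A_n^{\tau,(r)},B_n^{\tau,(r)}$ are still non-diagonal in general (they are merely simultaneously diagonalizable under \textbf{T5}), so Theorem~\ref{Existence_A12} does not apply to $S_n^\tau$ as you assert. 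The paper bridges this gap by constructing the Gaussian analog $T_n^\tau$ (whose rotational invariance allows reduction to the diagonal case) and then invoking Theorem~\ref{Lindeberg_implementation} to show $|\mathbb{E}\hat{s}_n(z)-\mathbb{E}\hat{t}_n(z)|\to 0$; your proposal never addresses the transition from general (commuting) to diagonal scaling matrices and never mentions the Gaussian comparison or Lindeberg's principle. Without that step, the argument that $s_n^\tau\to s^{\infty,\tau}$ a.s.\ has no support.

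A secondary, smaller discrepancy: you centre and rescale the non-Gaussian entries $z_{ij}^{(r)}$ and claim this is handled by ``the standard Bai--Silverstein argument.'' In the paper, centring (Step 4) and rescaling (Step 5) are applied only on the Gaussian side ($\hat W_n\to\tilde W_n\to\Ddot W_n$); the truncated non-Gaussian matrix $\hat Z_n^{(r)}$ is never centred or rescaled, and the discrepancy in its first two moments is absorbed into the Lindeberg comparison. Doing the centring/rescaling on the $Z$-side is not obviously wrong, but it would require its own justification (the rank/Corollary A.42 bounds the paper uses for the Gaussian side rely on the specific chain $\hat T_n\to\tilde T_n\to\Ddot T_n$), and it does not remove the need for the Gaussian/Lindeberg bridge described above. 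Your Markov-inequality bound $\operatorname{rank}(S_n-S_n^\tau)\lesssim (p+n)/\tau$ is a serviceable alternative to the paper's argument via continuity points of $H$, and items 1, 2 and 4 of the statement (normal families, uniqueness via Theorem~\ref{Uniqueness2}, and the normalisation $\lim_{y\to\infty}\mathbbm{i}y\,s^\infty(\mathbbm{i}y)=-1$) are handled essentially as in the paper; the missing Lindeberg step is the one piece that must be supplied to make the proof of item 3 complete.
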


\begin{proof}    
In this section, we will prove (2)-(4) of Section \ref{ProofSketch} under the general conditions of Theorem \ref{mainTheorem}. We create a sequence of matrices that are asymptotically \textit{similar} in distribution to $\{S_n\}_{n=1}^\infty$ for large $n$ but satisfying properties that are amenable to our analysis. The below steps give an outline of the proof with the essential details split into individual modules. 
\begin{enumerate}
    \item[Step1] For a p.s.d. matrix $S$ and a fixed $\tau > 0$, let $S(\tau)$ represent the matrix obtained by replacing all eigenvalues of $S$ greater than $\tau$ with $\tau$ in its spectral decomposition. Noting the definition of JESD (\ref{eqn:defining_JESD}), fixing $\tau > 0$, and letting $n \rightarrow \infty$, we must have
    \begin{align*}
        H_n^\tau := F^{\textbf{A}_n(\tau)} \xrightarrow{d} H^\tau \quad G_n^\tau := F^{\textbf{B}_n(\tau)} \xrightarrow{d} G^\tau.
    \end{align*}

    Note that $F^{\textbf{A}_n} = F^{\boldsymbol{\Pi}_n}$ and $F^{\textbf{B}_n} = F^{\boldsymbol{\Delta}_n}$ because of \textbf{T3}. Therefore, we also have,
        \begin{align*}
        F^{\boldsymbol{\Pi}_n(\tau)} \xrightarrow{d} H^\tau \quad F^{\boldsymbol{\Delta}_n(\tau)} \xrightarrow{d} G^\tau.
    \end{align*}

    However, we will choose $\tau > 0$ such that $(\tau,\ldots, \tau)$ is a continuity point of $H$.
    \item[Step2] Recall the definitions of $S_n$ (\ref{defining_Sn}) and $T_n$ (\ref{defining_Tn}). Next, we construct sequences $S_n^\tau, T_n^\tau$ by spectral truncation at $\tau>0$.
    \begin{align}
        S_n^\tau &:= \frac{1}{n}X_n^\tau (X_n^\tau)^* \text{, where } X_n^\tau = \sum_{r=1}^KU_n^{(r)}(\tau)Z_n^{(r)}V_n^{(r)}(\tau) \text{, and}\\ \notag
        T_n^\tau &:= \frac{1}{n}Y_n^\tau (Y_n^\tau)^* \text{, where } Y_n^\tau =\sum_{r=1}^K\Omega_n^{(r)}(\tau)W_n^{(r)}\Sigma_n^{(r)}(\tau).
    \end{align}
    \item[Step3] \textbf{Truncation:} With $a$ as in \textbf{A2}, define $\hat{Z}_n^{(r)} := ((\hat{z}_{ij}^{(r)}))$ with $\hat{z}_{ij}^{(r)} = z_{ij}^{(r)}\mathbbm{1}_{\{|z_{ij}^{(r)}| \leq n^a\}}$. Similarly, define $\hat{W}_n^{(r)} := ((\hat{w}_{ij}^{(r)}))$ with $\hat{w}_{ij}^{(r)} = w_{ij}^{(r)}\mathbbm{1}_{\{|w_{ij}^{(r)}| \leq n^a\}}$. Now, let 
    \begin{align}\label{defining_SnHatTnHat}
        \hat{S}_n &:= \frac{1}{n}\hat{X}_n \hat{X}_n^* \text{, where } \hat{X}_n = \sum_{r=1}^KU_n^{(r)}(\tau)\hat{Z}_n^{(r)}V_n^{(r)}(\tau) \text{, and}\\ \notag
        \hat{T}_n &:= \frac{1}{n}\hat{Y}_n \hat{Y}_n^* \text{, where } \hat{Y}_n = \sum_{r=1}^K\Omega_n^{(r)}(\tau)\hat{W}_n^{(r)}\Sigma_n^{(r)}(\tau).
    \end{align}
    \item[Step4] \textbf{Centering:} Let $\tilde{X}_n = \hat{X}_n-\mathbb{E}\hat{X}_n$ and $\tilde{Y}_n:= \hat{Y}_n - \mathbb{E}\hat{Y}_n$.
    \item[Step5] \textbf{Rescaling:} 
   $\Ddot{W}_n^{(r)} = (\Ddot{w}_{ij}^{(r)}) = (\tilde{w}_{ij}^{(r)}/\rho_n)$ where $\rho_n^2 := \mathbb{E}|\tilde{w}_{11}^{(1)}|^2$. Let
    \begin{align}
         \Ddot{T}_n &:=\frac{1}{n}\ddot{Y}_n\ddot{Y}_n^* \text{, where } \ddot{Y}_n = \sum_{r=1}^KU_n^{(r)}(\tau)\Ddot{W}_n^{(r)}V_n^{(r)}(\tau).
    \end{align}  
   Let $s_n(\cdot), s_n^\tau(\cdot), \hat{s}_n(\cdot), \hat{t}_n(\cdot), \tilde{t}_n(\cdot), \Ddot{t}_n(\cdot)$ be the Stieltjes transforms of $F^{S_n}, F^{S_n^\tau}$, $F^{\hat{S}_n}, F^{\hat{T}_n}$, $F^{\tilde{T}_n}, F^{\Ddot{T}_n}$ respectively.
    \item[Step6] Noting that $\Ddot{T}_n$ satisfies \textbf{A1-A2}, by Theorem \ref{Existence_A12}, we have $F^{\Ddot{T}_n} \xrightarrow{a.s.} F^\tau$ for some probability distribution $F^\tau$ which is characterized by the triplet $(\textbf{h}^\tau, \textbf{g}^\tau, s^\tau)$ satisfying (\ref{s_main_eqn}), (\ref{h_main_equation}) and (\ref{g_main_equation}) with $H^\tau$ instead of $H$. In particular, we have $|\Ddot{t}_n(z) - s^\tau(z)| \xrightarrow{a.s.} 0$ by the same theorem. Note that $H$ is the weak limit of the JESD of $\{A_n^{(r)};r \in [K]\}_{n=1}^\infty$ which are not necessarily diagonal matrices as was assumed in Theorem \ref{Existence_A12}.
    \item[Step7] Next, we study the limiting behavior of $(\textbf{h}^\tau, \textbf{g}^\tau)$. The idea is to show that for any arbitrary subsequence of $\{\textbf{h}^\tau\}$, there exists subsequential limits that are analytic and satisfy (\ref{h_main_equation}). By Theorem \ref{Uniqueness2}, all these subsequential limits must be the same, which we denote by $\textbf{h}^\infty$. Therefore, $\textbf{h}^\tau \xrightarrow{} \textbf{h}^\infty$ and by similar arguments, $\textbf{g}^\tau \xrightarrow{} \textbf{g}^\infty$ as $\tau \rightarrow \infty$. Moreover, ($\textbf{h}^\infty, \textbf{g}^\infty$) satisfy (\ref{hg_recursive}).
    \item[Step8] We derive $s^\infty$ from $\textbf{h}^\infty$ using (\ref{s_main_eqn}) and show that $s^\infty$ satisfies the conditions of Theorem 1 of \cite{GeroHill03}. So, there exists some distribution $F^\infty$ corresponding to $s^\infty$. Moreover, we show that $s^\tau \rightarrow s^\infty$.
   \textbf{Step7} and \textbf{Step8} are shown in Appendix \ref{sec:ProofExistenceGeneral} concluding the proof of (1), (2) and (4).
    \item[Step9]  For the proof of (3), we have the following inequality:
    \begin{align*}
        |s_n(z) - s^\infty(z)| & \, \leq |s_n(z) - s_n^\tau(z)| + |s_n^\tau(z)-\hat{s}_n(z)| + |\hat{s}_n(z)-\mathbb{E}\hat{s}_n(z)|\\ 
        & +
|\mathbb{E}\hat{s}_n(z)-\mathbb{E}\hat{t}_n(z)|+|\mathbb{E}\hat{t}_n(z)-\hat{t}_n(z)|+ |\hat{t}_n(z)-\tilde{t}_n(z)|\\
&+|\tilde{t}_n(z) - \Ddot{t}_n(z)| +|\Ddot{t}_n(z)-s^\tau(z)| + |s^\tau(z) - s^\infty(z)|.
    \end{align*}
We will show that each term on the RHS goes to 0 as $n, \tau \rightarrow \infty$. \begin{itemize}
    \item $|\hat{s}_n(z)-\mathbb{E}\hat{s}_n(z)| \xrightarrow{a.s.} 0$, $|\hat{t}_n(z)-\mathbb{E}\hat{t}_n(z)| \xrightarrow{a.s.} 0$  follow from Lemma \ref{ConcentrationLemma}.
    \item From Theorem \ref{Lindeberg_implementation}, we have $|\mathbb{E}\hat{s}_n(z) - \mathbb{E}\hat{t}_n(z)|\xrightarrow{} 0$.
    \end{itemize}
From Appendix \ref{Step10} and using Lemma B.18 of \cite{BaiSilv09}, we have \begin{itemize}
    \item $L(F^{S_n}, F^{S_n^\tau}) \leq ||F^{S_n} - F^{S_n^\tau}|| \xrightarrow{a.s.} 0$,
    \item $L(F^{S_n^\tau}, F^{\hat{S}_n}) \leq ||F^{S_n^\tau} - F^{\hat{S}_n}|| \xrightarrow{a.s.} 0$,
    \item $L(F^{\hat{T}_n}, F^{\tilde{T}_n}) \leq ||F^{\hat{T}_n} - F^{\tilde{T}_n}|| \xrightarrow{a.s.} 0$, and
    \item $L(F^{\tilde{T}_n}, F^{\Ddot{T}_n}) \xrightarrow{a.s.} 0$.
\end{itemize}
  Applying Lemma \ref{Levy_Stieltjes} on the above gives $|s_n(z) - s_n^\tau(z)| \xrightarrow{a.s.} 0$, $|s_n^\tau(z) - \hat{s}_n(z)| \xrightarrow{a.s.} 0$, $|\hat{t}_n(z) - \Tilde{t}_n(z)| \xrightarrow{a.s.} 0$ and, $|\tilde{t}_n(z) - \Ddot{t}_n(z)| \xrightarrow{a.s.} 0$ respectively. From \textbf{Step6}, we have $|\Ddot{t}_n(z) - s^\tau(z)| \xrightarrow{a.s.} 0$.
\item[Step10] Hence, $s_n(z) \xrightarrow{a.s.} s^\infty(z)$ which is a Stieltjes transform. Therefore, by Theorem 1 of \cite{GeroHill03}, $F^{S_n} \xrightarrow{a.s.} F^\infty$ where $F^\infty$ is characterized by $(\textbf{h}^\infty, \textbf{g}^\infty, s^\infty)$ which satisfy (\ref{s_main_eqn}), (\ref{h_main_equation}), (\ref{g_main_equation}) and (\ref{hg_recursive}). This concludes the proof.
\end{enumerate}
\end{proof}
\section{A few properties of the L.S.D.}
\begin{theorem}\label{pointMassTheorem}
    \textbf{(Point mass at 0):} Let $F$ be the L.S.D. of $S_n$. Suppose $H$ and $G$ do not have any point mass at $\textbf{0} \in \mathbb{R}_+^K$, the point mass of the L.S.D. ($F$) at $\textbf{0}$ is given by $$F(\{\textbf{0}\}) = \max\bigg\{0, 1 - \frac{1}{c}\bigg\}.$$
\end{theorem}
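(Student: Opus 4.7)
The plan is to compute $F(\{0\})$ from the Stieltjes-inversion identity for point masses, $F(\{0\}) = -\lim_{y \downarrow 0} iy\, s_F(iy)$, by using the representation (\ref{s_alt_char_1}) together with the self-consistency relations (\ref{hg_recursive}). Starting from $zs_F(z) = -1 - z\,\textbf{h}(z)^T\textbf{g}(z)$ and substituting $-iy g_r(iy) = \int \theta_r\,dG(\theta)/(1+c\theta^T\textbf{h}(iy))$ from $\textbf{g}=\textbf{O}(z,c\textbf{h},G)$, the algebraic identity $\theta^T\textbf{h}/(1+c\theta^T\textbf{h}) = c^{-1}(1 - (1+c\theta^T\textbf{h})^{-1})$ gives
\[
iy\,s_F(iy) = \Bigl(\tfrac{1}{c}-1\Bigr) - \tfrac{1}{c}\int\frac{dG(\theta)}{1+c\theta^T\textbf{h}(iy)}.
\]
A dual manipulation using $\textbf{h}=\textbf{O}(z,\textbf{g},H)$ yields $iy\,s_F(iy) = -\int dH(\lambda)/(1+\lambda^T\textbf{g}(iy))$. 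Setting $L_H := \lim_{y\downarrow 0}\int dH(\lambda)/(1+\lambda^T\textbf{g}(iy))$ and $L_G := \lim_{y\downarrow 0}\int dG(\theta)/(1+c\theta^T\textbf{h}(iy))$, one obtains $F(\{0\}) = L_H = 1 - 1/c + L_G/c$, reducing the problem to computing $L_H$ (or equivalently $L_G$).

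Two preliminary observations support the analysis. First, because each $h_r,g_r$ is the Stieltjes transform of a positive measure on $\mathbb{R}_+$ (Remark \ref{h_rn_is_a_ST}), a direct computation gives $\operatorname{Re}(h_r(iy)) \ge 0$ and $\operatorname{Re}(g_r(iy)) \ge 0$; hence for $\theta, \lambda \in \mathbb{R}_+^K$, $\operatorname{Re}(1+c\theta^T\textbf{h}(iy)), \operatorname{Re}(1+\lambda^T\textbf{g}(iy)) \ge 1$ and both integrands are bounded by $1$ in modulus, uniformly in $y>0$. Second, the rank bound $\operatorname{rank}(S_n) \le \min(p,n)$ gives $F^{S_n}(\{0\}) \ge \max(0, 1-1/c_n)$, and Portmanteau on the closed set $\{0\}$ together with almost-sure weak convergence of $F^{S_n}$ to $F$ yields the lower bound $F(\{0\}) \ge \max(0, 1-1/c)$.

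For the matching upper bound, the case $c<1$ reduces to $c\ge 1$ via the spectral identity $pF^{S_n} = nF^{\tilde S_n} + (p-n)\delta_0$ and Corollary \ref{corollary_dual_results}, so it suffices to prove $F(\{0\}) \le 1-1/c$ for $c \ge 1$. Using $F(\{0\}) = L_H$, I would argue that every subsequential limit of $\textbf{g}(iy)$ as $y \downarrow 0$ is finite: if some $g_r(iy_k) \to \infty$ along a subsequence $y_k \downarrow 0$, a case analysis over the subset of blowing-up components combined with the nondegeneracy in condition T3 on $H$ would force $|1+\lambda^T\textbf{g}(iy_k)| \to \infty$ for $H$-a.e.\ $\lambda$; DCT with the uniform bound then gives $L_H = 0$, contradicting $F(\{0\}) \ge 1-1/c > 0$. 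Once every subsequential limit is finite, the uniqueness statement Theorem \ref{Uniqueness2} yields a full limit $\textbf{g}_0 \in \mathbb{C}^K$. Concurrently, $-iy h_r(iy) = \int \lambda_r\,dH/(1+\lambda^T\textbf{g}(iy)) \to h_r^* := \int \lambda_r\,dH/(1+\lambda^T\textbf{g}_0)$, which has strictly positive real part by nondegeneracy of $H$, so $\textbf{h}(iy) \sim \textbf{h}^*/(-iy)$; passing to the limit in (\ref{hg_recursive}) gives $g_{0,r} = \int \theta_r\,dG/(c\theta^T\textbf{h}^*)$. Two evaluations of the bilinear form $\textbf{h}^{*T}\textbf{g}_0$,
\[
\textbf{h}^{*T}\textbf{g}_0 = \int \frac{\theta^T\textbf{h}^*}{c\theta^T\textbf{h}^*}\,dG = \tfrac{1}{c},\qquad \textbf{h}^{*T}\textbf{g}_0 = \int \frac{\lambda^T\textbf{g}_0}{1+\lambda^T\textbf{g}_0}\,dH = 1 - L_H,
\]
combine to yield $L_H = 1 - 1/c$, so $F(\{0\}) = 1 - 1/c$, matching the lower bound.

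The main obstacle is the finiteness claim for $\textbf{g}(iy)$ at $y=0$. If only a strict subset $S \subsetneq [K]$ of components blows up, then $|1+\lambda^T\textbf{g}(iy_k)|$ need not blow up on the coordinate subspace $\{\lambda : \lambda_r = 0 \text{ for all } r \in S\}$, so vanishing of the integrand on $H$-almost every $\lambda$ is not immediate from condition T3 alone (which only precludes $H$ from being \emph{entirely} supported on a single axis). Handling this requires a joint inductive analysis of subsequential limits of both $\textbf{h}(iy)$ and $\textbf{g}(iy)$, using the coupled equations in (\ref{hg_recursive}) to propagate the blowup across coordinates and force the required pointwise a.e.\ convergence. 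This is the principal technical step distinguishing the multi-component ($K>1$) setting from the classical $K=1$ case.
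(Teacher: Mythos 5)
Your proposal takes a genuinely different route from the paper's. The paper argues probabilistically: a permutation/convexity argument shows $\textbf{x}^*S_n\textbf{x}$ stochastically dominates the corresponding quadratic form with scalar scaling matrices $\bar U_r,\bar V_r$, which is then compared with the explicit Marchenko--Pastur limit of Corollary \ref{corollary_AB_identity} to conclude that $\lambda_{\min}(S_n)$ is bounded away from zero when $c<1$; the cases $c>1$ and $c=1$ are handled via the $F$--$\bar F$ relationship (\ref{F_Fbar_relationship}) and the $1/\sqrt{t}$ edge behavior of the MP density. You instead work analytically with the fixed-point system (\ref{h_main_equation})--(\ref{hg_recursive}) and extract $F(\{0\})=-\lim_{y\downarrow 0}\mathbbm{i}y\,s_F(\mathbbm{i}y)$ from the boundary behavior of $\textbf{h}(\mathbbm{i}y),\textbf{g}(\mathbbm{i}y)$. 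The bilinear-form double evaluation $\textbf{h}^{*T}\textbf{g}_0=1/c=1-L_H$ is clean, and, if completed, would also deliver the boundary asymptotics $\textbf{h}(\mathbbm{i}y)\sim \textbf{h}^*/(-\mathbbm{i}y)$, $\textbf{g}(\mathbbm{i}y)\to\textbf{g}_0$ for $c\ge 1$ — a sharper statement than what the paper later extracts in Lemma \ref{Lemma_c_geq_1}.

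However, the gap you flag is real and not a formality, and it leaves the $c\ge1$ branch incomplete. Condition \textbf{T3} of Theorem \ref{mainTheorem} only forbids $H$ from being supported entirely on one axis; it does not rule out positive $H$-mass on a coordinate subspace $\{\boldsymbol{\lambda}:\lambda_r=0,\,r\in S\}$ for a proper subset $S\subsetneq[K]$. If only the coordinates of $\textbf{g}(\mathbbm{i}y_k)$ in $S$ diverge, then $1+\boldsymbol{\lambda}^T\textbf{g}(\mathbbm{i}y_k)$ stays bounded on that subspace, and DCT yields only $L_H\ge H(\{\boldsymbol{\lambda}:\lambda_r=0,\,r\in S\})$ rather than $L_H=0$, so the contradiction you invoke does not materialize. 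Ruling out partial blowup across the coupled system is precisely the ``joint inductive analysis'' you mention and have not supplied. Two subsidiary points also need repair: Theorem \ref{Uniqueness2} gives uniqueness for $z\in\mathbb{C}_+$, not for boundary limits at $z=0$, although your final identity is the same for any finite subsequential $\textbf{g}_0$ so this is fixable; and when $c=1$ the lower bound $1-1/c$ is zero, so the contradiction strategy collapses and the same partial-blowup analysis is needed to conclude $L_H=0$ directly, whereas the paper handles $c=1$ by an explicit MP comparison.
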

The proof has been done in Section \ref{sec:proofpointMassTheorem}.

\begin{corollary}\label{pointMassCorollary}
        \textbf{(Point Mass at 0)} In addition to Theorem \ref{pointMassTheorem}, if $H(\{\textbf{0}\}) = 1-\alpha$ and $G(\{\textbf{0}\}) = 1- \beta$ where $0 < \alpha, \beta \leq 1$, the point mass of the L.S.D. ($F$) at $\textbf{0}$ is given by 
    $$F(\{\textbf{0}\}) = \max\bigg\{1-\alpha, 1 - \frac{\beta}{c}\bigg\}.$$
\end{corollary}
\begin{remark}
    See Figure \ref{fig:pointMassAt0} for an illustration of this result.
\end{remark}
\begin{figure}
    \centering
    \includegraphics[width=0.65\linewidth]{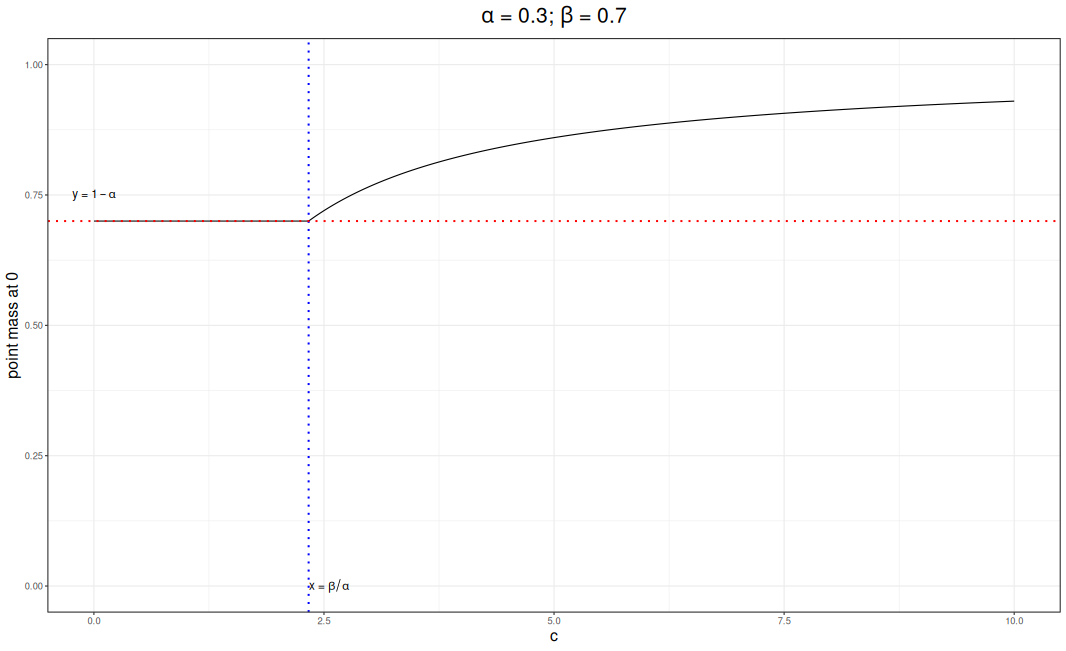}
    \caption{Point Mass of the L.S.D. at $0$ as a function of $c$}
    \label{fig:pointMassAt0}
\end{figure}
\begin{proof}
We express $H$ and $G$ in terms of probability distributions $H_1, G_1$ over $\mathbb{R}_+^K$ that are free of any point mass at $\textbf{0} \in \mathbb{R}^K$ as follows:
\begin{align*}
    H = (1-\alpha)\delta_\textbf{0} + \alpha H_1; \quad
    G = (1-\beta)\delta_\textbf{0} + \beta G_1.
\end{align*}
For a fixed $z \in \mathbb{C}_+$, let $(\textbf{h,g})$ be the unique solution of (\ref{hg_recursive}) corresponding to aspect ratio $c$ and population spectral distributions $H$ and $G$. Noting that $0<\alpha,\beta \leq 1$, we define $c_1 = c\alpha/\beta$, $\textbf{h}_1 = \textbf{h}\beta/\alpha$ and $\textbf{g}_1=\textbf{g}$. Since $c\textbf{h} = c_1\textbf{h}_1$, we observe that
\begin{align*}
   \textbf{h}_1 =& \int \frac{\boldsymbol{\lambda}dH_1(\boldsymbol{\lambda})}{-z/\beta + \boldsymbol{\lambda}^T\int \frac{\boldsymbol{\theta}dG_1(\boldsymbol{\theta})}{1+c_1\boldsymbol{\theta}^T\textbf{h}_1}}; \quad \textbf{g}_1 = \int \frac{\boldsymbol{\theta}dG_1(\boldsymbol{\theta})}{-z/\beta + c_1\boldsymbol{\theta}^T\int \frac{\boldsymbol{\lambda}dH_1(\boldsymbol{\lambda})}{1+\boldsymbol{\lambda}^T\textbf{g}_1}}. 
\end{align*}
Therefore we have the below equivalence relationships 
\begin{align}\label{h_g_z_z_by_beta}
    \textbf{h}(z,c,H,G)=\textbf{h}(z/\beta,c_1,H_1,G_1) = \textbf{h}_1; \quad  \textbf{g}(z,c,H,G)=\textbf{g}(z/\beta,c_1,H_1,G_1)=\textbf{g}_1.
\end{align}
Thus ($\textbf{h}_1,\textbf{g}_1$) can be interpreted as the unique solution of (\ref{hg_recursive}) at $z/\beta$ corresponding to the modified aspect ratio $c_1=c\alpha/\beta$ and population spectral distributions $H_1$ and $G_1$. 

When $c_1 \geq 1$, By Lemma \ref{Lemma_c_geq_1}, for some $r_0 \in [K]$, we have $\underset{\epsilon \downarrow 0}{\lim}\,h_{r_0}(\mathbbm{i}\epsilon, c_1, H_1,G_1)= \infty$. Therefore, by (\ref{h_g_z_z_by_beta}), we have $\underset{\epsilon \downarrow 0}{\lim}\,h_{r_0}(\mathbbm{i}\epsilon, c, H,G)= \underset{\epsilon \downarrow 0}{\lim}\,h_{r_0}(\mathbbm{i}\epsilon/\beta, c_1, H_1,G_1)= \infty$. Note that for any $r \in [K]$, we have $\Re(h_r(\mathbbm{i}\epsilon)) \geq 0$. For $\boldsymbol{\theta} \in \mathbb{R}_+^K$, we have
\begin{align*}
    \absmod{\frac{1}{1 + c\boldsymbol{\theta}^T\textbf{h}(\mathbbm{i}\epsilon)}} \leq \frac{1}{\Re(1 + c\boldsymbol{\theta}^T\textbf{h}(\mathbbm{i}\epsilon))} \leq 1.
\end{align*}
Finally applying DCT in (\ref{s_alt_char_2}), we have 
\begin{align*}
     -zs(z) =& 1 - \frac{1}{c} + \frac{1}{c}\int\frac{dG(\boldsymbol{\theta})}{1 + c\boldsymbol{\theta}^T \textbf{h}(z)} = 1 - \frac{1}{c} + \frac{1-\beta}{c}+\frac{\beta}{c}\int\frac{dG_1(\boldsymbol{\theta})}{1 + c\boldsymbol{\theta}^T \textbf{h}(z)}\\ \notag
    \implies \underset{\epsilon \downarrow 0}{\lim}\mathbbm{i}\epsilon s(\mathbbm{i}\epsilon) =& 1 - \frac{\beta}{c} + \underset{\epsilon \downarrow 0}{\lim} \int \frac{dG_1(\boldsymbol{\theta})}{1+c\boldsymbol{\theta}^T\textbf{h}(\mathbbm{i}\epsilon)} = 1 - \frac{\beta}{c}.
\end{align*}
Similarly when $c_1\leq 1$ i.e., $c \leq \beta/\alpha$, by interchanging the roles of (H,G), $(\alpha,\beta)$ and $(p,n)$ the L.S.D. of the dual has a point mass at 0 amounting to $\bar{F}(\{0\}) = 1 - \frac{\alpha}{1/c} = 1- c\alpha$.
Now noting the relationship between the Stieltjes transforms of $F$ and $\bar{F}$, we have
\begin{align*}
     s(z) = \bigg(1-\frac{1}{c}\bigg)\frac{1}{-z} + \frac{1}{c}\bar{s}(z) \implies  \underset{\epsilon \downarrow 0}{\lim} \bigg( -\mathbbm{i}\epsilon s(-\mathbbm{i}\epsilon)\bigg) =& 1 - \frac{1}{c} + \frac{1}{c} \underset{\epsilon \downarrow 0}{\lim}  \bigg(-\mathbbm{i}\epsilon \bar{s}(\mathbbm{i}\epsilon)\bigg) \\ \notag    
    =& 1 - \frac{1}{c} + \frac{1}{c}(1 - c\alpha) = 1-\alpha.
\end{align*}
\end{proof}
\begin{theorem}\label{thm:Continuity}
    \textbf{(Continuity):} For a fixed $z \in \mathbb{C}_+$, the unique solutions of (\ref{h_main_equation}) and (\ref{g_main_equation}) can be considered as functions of $H$ and $G$. For any $D_0 < \infty$, the restriction of these functions on the set 
    $$\mathcal{D}_K = \{\mu_K: \mu_K \text{ is a probability measure on } \mathbb{R}_+^K; \int \lambda_r^2 d\mu_K(\boldsymbol{\lambda}) \leq D_0; r \in [K]\}$$ are continuous.
\end{theorem}
The proof has been done in Section \ref{sec:proofContinuity}.

\section{A few Special Cases}\label{sec:SpecialCases}
   In this section, we present a few special cases in which our main result reduces to simplified versions. The proofs follow directly from Theorem \ref{mainTheorem} itself and are omitted except for the last one. We start with the case where all the marginal population spectral distributions are functionally related.
\begin{corollary}
   Fix $\textbf{a} = (a_1,\ldots,a_K)$ and $\textbf{b}=(b_1,\ldots,b_K)$ where $a_r,b_r>0$ for all $r \in [K]$. Let $H,G$ be supported on $\mathcal{S}_H = \{(a_1\lambda, \ldots,a_K\lambda): \lambda\sim H_1\}$ and $\mathcal{S}_G = \{(b_1\theta, \ldots,b_K\theta): \theta\sim G_1\}$ respectively for some uni-variate distributions $H_1,G_1$ on $\mathbb{R}_+$.  
   Let $\tilde{h},\tilde{g}$ be analytic functions on $\mathbb{C}^+$ uniquely satisfying the integral equations:
\begin{align*}
\tilde{h}(z) = \int \frac{\lambda dH_1(\lambda)}{-z + \lambda \textbf{a}^T\textbf{b}\int \frac{\theta dG_1(\theta)}{1+c\theta\textbf{a}^T\textbf{b}\tilde{h}(z)}}~, \qquad
\tilde{g}(z) = \int \frac{\theta dG_1(\theta)}{-z + c\theta \textbf{a}^T\textbf{b}\int \frac{\lambda dH_1(\lambda)}{1+\lambda\textbf{a}^T\mathbf{b}\tilde{g}(z)}}, \quad z \in \mathbb{C}^+.
\end{align*}
Then with $h_r(z) = a_r \tilde{h}(z)$, and $g_r(z)=b_r \tilde{g}(z)$, for $r \in [K]$, $(\textbf{h}(z),\textbf{g}(z))$ is the unique analytic solution on $\mathbb{C}^+$ of the system of integral equations describing the Stieltjes transform of the L.S.D. as per Theorem \ref{mainTheorem}.
\end{corollary}
This shows that in the case that the marginal population spectral distributions are scale multiples of each other, the L.S.D. of the sample covariance reduces to that under a separable covariance model.

\begin{corollary}
   In theorem \ref{mainTheorem}, suppose for all $r \in [K]$, we have $A_n^{(r)}=A_n$ and $B_n^{(r)}$ follow Assumptions \textbf{T3} and \textbf{T4}. Note that $H$ reduces to a univariate distribution over $\mathbb{R}_+$ in this case. The Stieltjes transform $s_F$, of the L.S.D. at $z \in \mathbb{C}_+$ simplifies to the following form:
   \begin{align*}
       s_F(z) = \int_{\mathbb{R}_+} \frac{dH(\lambda)}{-z(1 + \lambda\sum_{r=1}^Kg_r(z)},
   \end{align*}
   where $g_r(z)$ is given by (\ref{g_main_equation}).
   In addition to the conditions on $A_n^{(r)}$, suppose $B_n^{(r)} = \beta_r I_n$ with $\beta_r >0$ for all $r \in [K]$. The Stieltjes transform retains the above expression with the following simplification for $g_r(z)$:
   \begin{align*}
       g_r(z) = \frac{\beta_r}{-z(1 + ch(z)\sum_{s=1}^K\beta_s)}, \text{ for all } r \in [K].
   \end{align*}
\end{corollary}

\begin{corollary}\label{corollary_AB_identity}
    Suppose for $r \in [K], n \in \mathbb{N}$, we have $A_n^{(r)}=\alpha_r I_p$ and $B_n^{(r)} = \beta_r I_n$ where $\alpha_r,\beta_r >0$. Let $\gamma = \boldsymbol{\alpha}^T\boldsymbol{\beta}$. Then the L.S.D. of $F^{S_n}$ at $z \in \mathbb{C}_+$ is characterized by the following Stieltjes Transform:
    $$s(z) = \bigg(\frac{\gamma(1-c) - z}{2cz\gamma }+\frac{\sqrt{(z-\gamma(1+\sqrt{c})^2)(z-\gamma(1-\sqrt{c})^2)}}{2cz\gamma}\bigg).$$
Further, if the innovation entries are i.i.d. standard Gaussian and $0 < c < 1$, we have  $$\lambda_{min}(S_n) \xrightarrow{a.s.} \gamma(1-\sqrt{c})^2.$$
\end{corollary}
\begin{proof}
    Let $z \in \mathbb{C}_+$ be arbitrary and the unique solution of (\ref{h_main_equation}) be $\textbf{h}(z) \in \mathbb{C}_+^K$. Denoting $\tilde{h}_r(z)=h_r(z)/\alpha_r$, we find that $\tilde{h}_r(z)=\tilde{h}_s(z)=s(z), r\neq s \in [K]$. This is because 
    $$h_{rn}(z) = \frac{1}{p}\operatorname{trace}(A_n^{(r)}Q(z))= \frac{\alpha_r}{p}\operatorname{trace}(Q(z))=\alpha_r s_n(z).$$ 
    
    Since $h_{rn}(z) \xrightarrow{a.s.} h_r(z)$ and $s_n(z) \xrightarrow{a.s.} s(z)$, we have $s(z)=h_r(z)/\alpha_r$ for any $r \in [K]$.
    
    For any $r \in [K]$, (\ref{h_main_equation}) implies that
    \begin{align*}
        -zh_r =& \bigg(1 + \frac{\boldsymbol{\alpha}^T \boldsymbol{\beta}}{-z(1+c\sum_{s=1}^K\beta_s h_s)}\bigg)^{-1}\\
        \implies\tilde{h}_r =& \bigg(-z + \frac{\boldsymbol{\alpha}^T \boldsymbol{\beta}}{1+c\boldsymbol{\alpha}^T\boldsymbol{\beta}\tilde{h}_r}\bigg)^{-1}\\
        \implies  \tilde{h}_r =&  \frac{1+c\gamma\tilde{h}_r}{-z(1+c\gamma\tilde{h}_r) + \gamma}\\
        \implies c\gamma z\tilde{h}&_r^2 +\tilde{h}_r(z + (c -1)\gamma) + 1 = 0\\
        \implies \tilde{h}_r=& \frac{\gamma(1-c)-z}{2c\gamma z}+\frac{\sqrt{z-z_+}\sqrt{z-z_-}}{2\gamma cz},
    \end{align*}
    where $z_\pm = \gamma (1\pm\sqrt{c})^2$.
    For the second result, note that
    \begin{align*}
        X_n = \sum_{r=1}^K\sqrt{\alpha_r\beta_r} Z_n^{(r)} \overset{d}{=} \sqrt{\gamma}Z \implies S_n = \frac{1}{n}X_nX_n^* \overset{d}{=} \gamma \bigg(\frac{1}{n}ZZ^*\bigg),
    \end{align*}
where $Z \in \mathbb{C}^{p \times n}$ is another standard Gaussian matrix independent of $Z_n^{(r)}$.
From Theorem 2 of \textcite{BaiYin}, it is clear that $\lambda_{min}(S_n) \xrightarrow{a.s.} \gamma (1-\sqrt{c})^2$.
\end{proof}

\section{Relaxation of commutativity requirement}\label{sec:RelaxingCommutativity}

We present a set of conditions which are strictly weaker than \textbf{T3}, but which are sufficient for Theorem \ref{mainTheorem} to hold without exact pairwise commutativity within $\{A_n^{(r)}\}_r$ and within $\{B_n^{(r)}\}_r$. For simplicity, in this section we shall refer to $A_n^{(r)},B_n^{(r)},Z_n^{(r)}$ as $A_r,B_r,Z_r$ respectively, for all $r \in [K]$. For the result below, we denote the nuclear norm of a matrix $S$ by $||S||_*$.

We shall use the suffix $r \rightarrow s$ to indicate that the eigenbasis of the $r^{th}$ coordinate will be imposed on the spectrum of the $s^{th}$ coordinate. For $r \in [K]$, we construct a $K-$tuple of pairwise commuting matrices as follows: 
$$A_{r \rightarrow s} := \begin{cases}
    P_{r}D_{s}P_{r}^* & \text{, if } r \neq s,\\
    A_r & \text{, if } r = s,
\end{cases} \,{ and }
\quad B_{r \rightarrow s} := \begin{cases}
    Q_{r}E_{s}Q_{r}^* & \text{, if } r \neq s,\\
    B_r & \text{, if } r = s.
\end{cases}$$
Then for any fixed $s \in [K]$, $(A_r,A_{r\rightarrow s})$ and $(B_r,B_{r\rightarrow s})$ share the same eigenbasis for all $r \in [K]$. We shall use the following notations:
$$\textbf{A}_{r\rightarrow} := (A_{r\rightarrow 1},\ldots, A_{r\rightarrow K}) \text{ and }\textbf{B}_{r\rightarrow} := (B_{r\rightarrow 1},\ldots, B_{r\rightarrow K}); \,r \in [K].$$

Thus for any $r \in [K]$, the above construction ensures that all the $K$ matrices in $\textbf{A}_{r\rightarrow}$ and in $\textbf{A}_{r\rightarrow}$ satisfy pairwise commutativity. This in particular, allows us to define the JESDs (recall \ref{def:defining_JESD}) of these K-tuples of matrices.

\begin{theorem}\label{thm:RelaxingCommutativty}
Instead of \textbf{T3} of Theorem \ref{mainTheorem}, suppose any of the following conditions hold:
\begin{enumerate}
    \item[\textbf{C1}:] For some $r_0,s_0 \in [K]$, $\underset{r\in [K]}{\max}\,\{\operatorname{rank}(A_{r} - A_{r_0\rightarrow r}), \operatorname{rank}(B_{r} - B_{s_0\rightarrow r})\}= o(p)$,
    \item[\textbf{C2}:] For some $r_0,s_0 \in [K]$, $\underset{r\in [K]}{\max}\,\{\operatorname{rank}(P_{r} - P_{r_0}), \operatorname{rank}(Q_{r} - Q_{s_0})\} = o(p)$, and
    \item[\textbf{C3}:] For some $r_0,s_0 \in [K]$,
$\underset{r \in [K]}{\max}\,\{||A_r - A_{r_0\rightarrow r}||_*, ||B_r - B_{s_0\rightarrow r}||_*\}= o(p)$.
\end{enumerate}
Let $H_n = \operatorname{JESD}(\textbf{A}_{r_0 \rightarrow})$ and $G_n = \operatorname{JESD}(\textbf{B}_{s_0 \rightarrow})$ and suppose $H_n \xrightarrow{d} H; \, G_n \xrightarrow{d} G$ and $H, G$ satisfy the conditions in \textbf{T4} of Theorem \ref{mainTheorem}. Then, the conclusion of Theorem \ref{mainTheorem} holds.
\end{theorem}
The proof has been done in Section \ref{sec:proofRelaxingCommutativty}.
\begin{remark}\label{SpecialCasesOfNonCommutativity}
As a generalization of the Householder construction for unitary matrices, let $P_{r} = I_p - 2U_{r}U_{r}^*$, where $U_{r}$ is a $p \times {k_r}$ matrix with orthonormal vectors as columns. If $\underset{r \in [K]}{\max}\{k_r\} = o(p)$, then $P_{r}$ satisfy \textbf{C2} of Theorem \ref{thm:RelaxingCommutativty}. Therefore, if the eigen bases of $A_n^{(r)}, B_n^{(r)}; r \in [K]$ are constructed as above separately, the result of Theorem \ref{mainTheorem} holds even without commutativity among $A_n^{(r)}$ and among $B_n^{(r)}$.
\end{remark}
\section{Simulation Study}\label{sec:Simulations}

We repeated these simulations at for $K=1,2,3$ and multiple values of $c = 0.5, 1,1,2.5$. Fixing $p=1000$, we get $n=p/c$. For each value of $K$ and $c$, the following steps were performed.

\textbf{Construction of $Z_n^{(r)}$}:
The entries of $Z_n^{(r)} \in \mathbb{C}^{p \times n}$ are sampled as i.i.d. standard complex Gaussian entries (\textbf{Example 1}) and t-distribution with $3$ degrees of freedom (\textbf{Example 2}).

\textbf{Construction of $A_n^{(r)}$}: We construct diagonal matrices $D_n^{(r)} \in \mathbb{R}^{p \times p}$ with i.i.d. samples from an exponential distribution with scale parameter $r$. A common unitary matrix, $P_n \in \mathbb{C}^{p \times p}$ is used for all $r \in [K]$. Then, define $A_n^{(r)} := P_n D_n^{(r)}P_n^*$.

\textbf{Construction of $B_n^{(r)}$}: Similarly, diagonal matrix $E_n^{(r)} \in \mathbb{R}^{n \times n}$ are constructed with i.i.d. samples from an exponential distribution with scale parameter $2r$. As before, a common unitary matrix, $Q_n \in \mathbb{C}^{n \times n}$ is used for all $r \in [K]$. Then, define $B_n^{(r)} := Q_n E_n^{(r)}Q_n^*$.

Then the sample covariance matrix, $S_n$ is constructed as per (\ref{defining_Sn}). We generate the histogram of the eigenvalues of $S_n$ and superimpose the numerical approximate of the theoretical density from the Stieltjes Transform as predicted in Theorem \ref{mainTheorem}. 

We observe that the spread of the spectrum is larger when the innovations are distributed as i.i.d. $t$ distribution with $3$ degrees of freedom (Figure \ref{Figure2}) as compared with the one with standard Gaussian innovations (Figure \ref{Figure1}). The histograms suggest that the bulk of the ESD largely coincides with its theoretical counterpart. However, the edge of the empirical spectrum seems to spread significantly to the right. The latter phenomenon is possibly due to slower convergence of the edge because of lack of higher moments of the innovations.

\begin{figure}[ht]
 \begin{subfigure}{0.3\textwidth}
     \includegraphics[width=\textwidth]{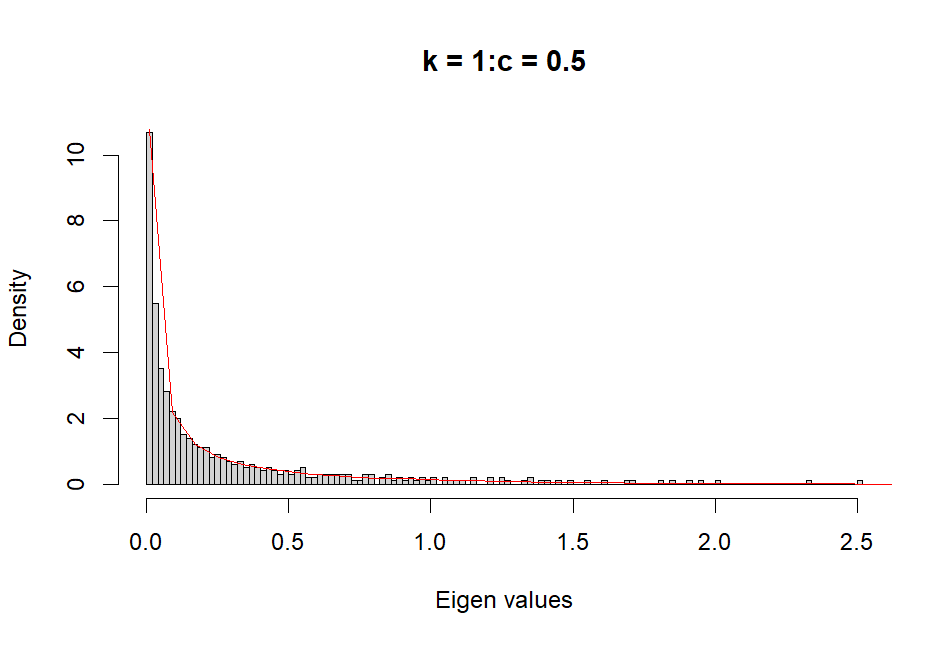}
     \caption{K=1:c=0.5}
     \label{fig1}
 \end{subfigure}
 \hfill
 \begin{subfigure}{0.3\textwidth}
     \includegraphics[width=\textwidth]{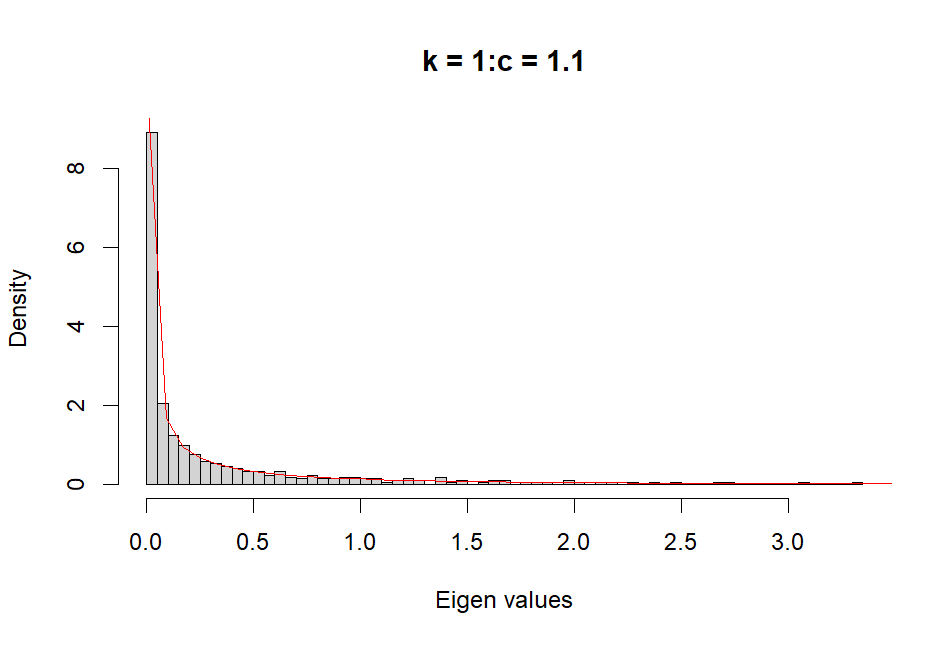}
     \caption{K=1:c=1.1}
     \label{fig2}
 \end{subfigure}
 \hfill
 \begin{subfigure}{0.3\textwidth}
     \includegraphics[width=\textwidth]{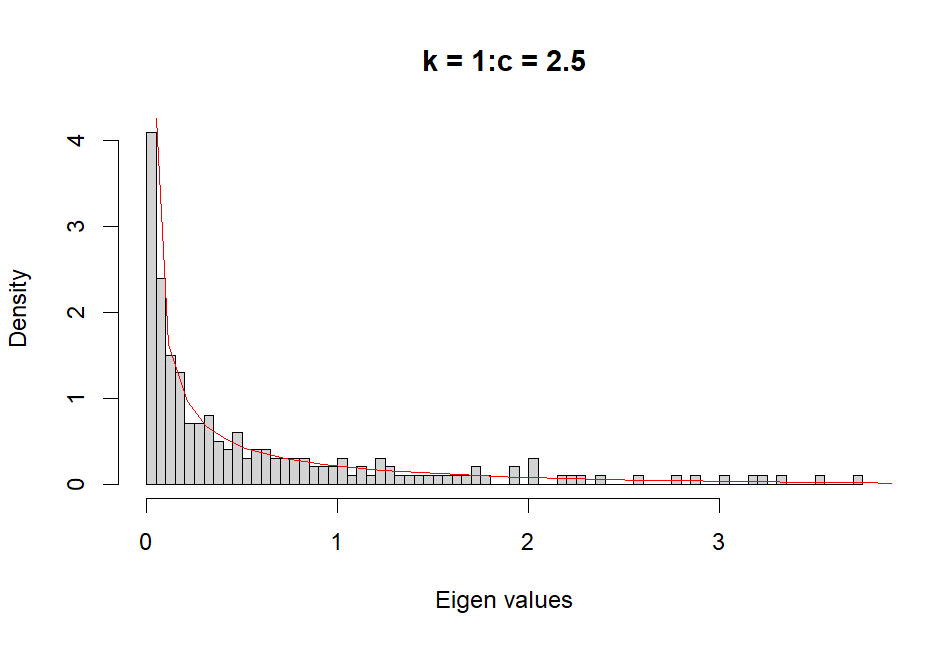}
     \caption{K=1:c=2.5}
     \label{fig3}
 \end{subfigure}
  \medskip
 \begin{subfigure}{0.3\textwidth}
     \includegraphics[width=\textwidth]{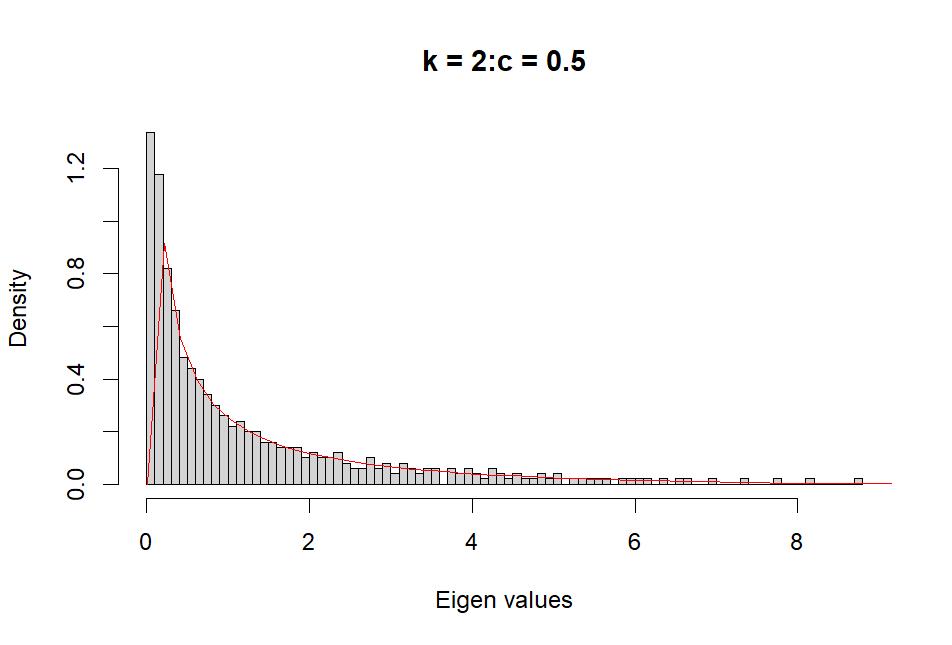}
     \caption{K=2:c=0.5}
     \label{fig4}
 \end{subfigure}
 \hfill
 \begin{subfigure}{0.3\textwidth}
     \includegraphics[width=\textwidth]{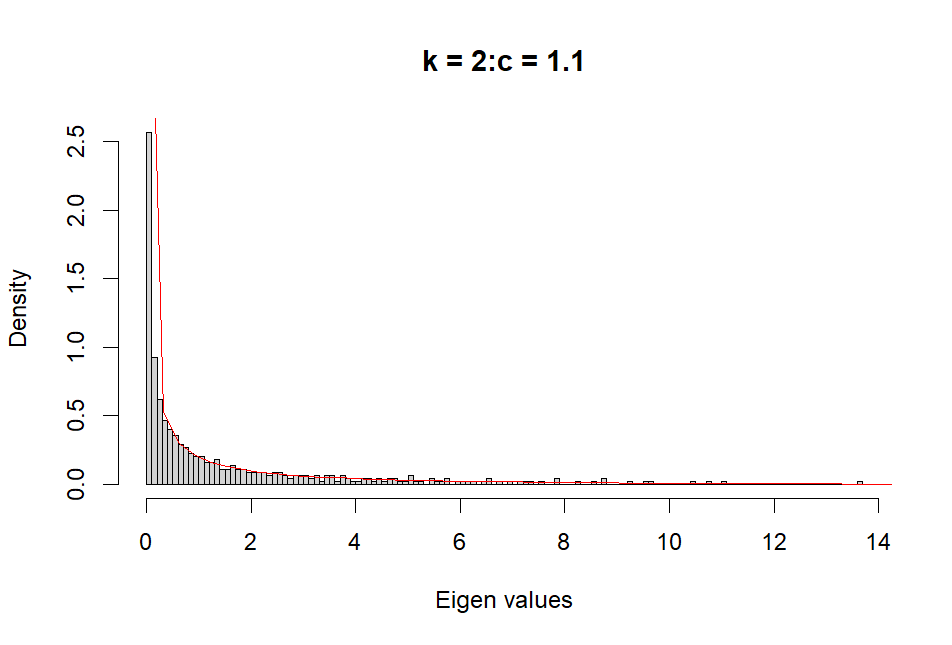}
     \caption{K=2:c=1.1}
     \label{fig5}
 \end{subfigure}
 \hfill
 \begin{subfigure}{0.3\textwidth}
     \includegraphics[width=\textwidth]{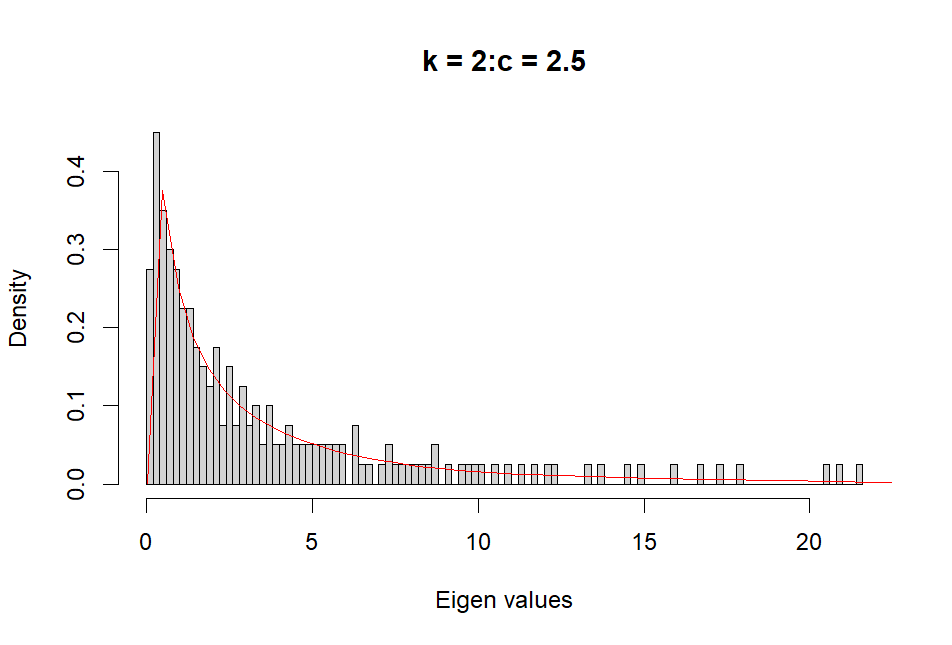}
     \caption{K=2:c=2.5}
     \label{fig6}
 \end{subfigure}
  \medskip
   \begin{subfigure}{0.3\textwidth}
     \includegraphics[width=\textwidth]{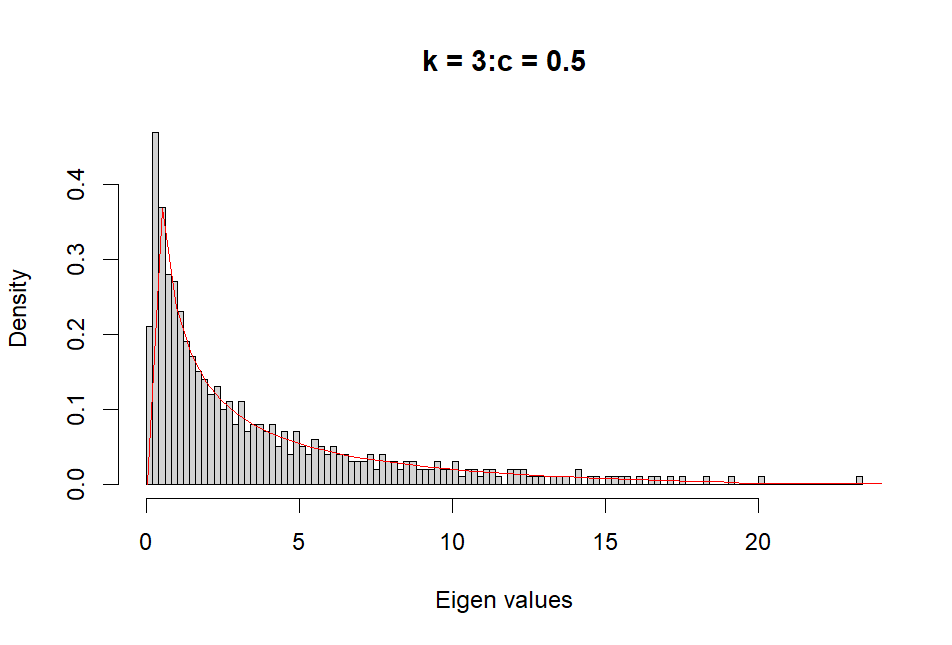}
     \caption{K=3:c=0.5}
     \label{fig7}
 \end{subfigure}
 \hfill
 \begin{subfigure}{0.3\textwidth}
     \includegraphics[width=\textwidth]{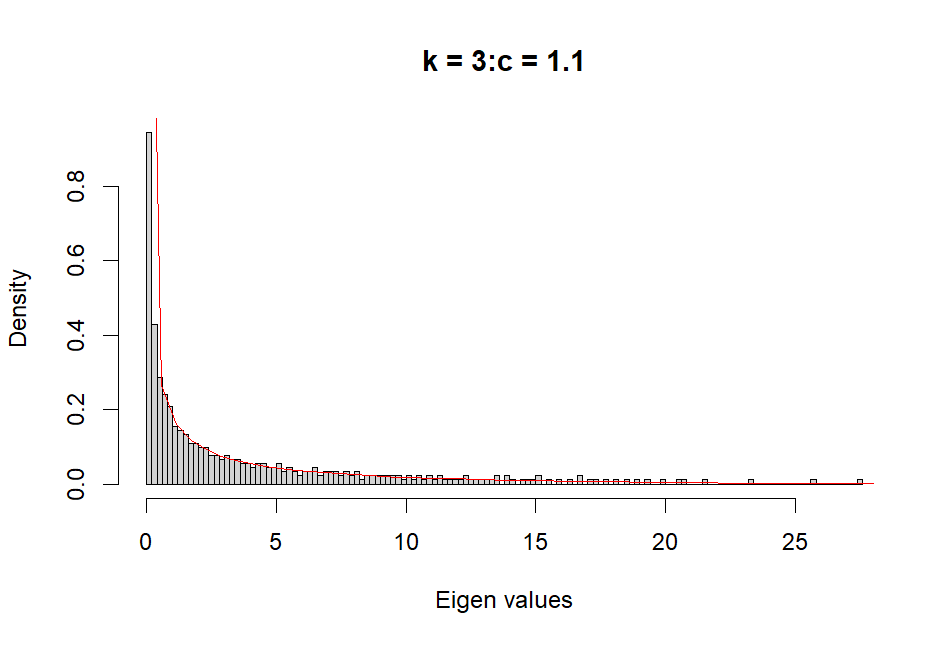}
     \caption{K=3:c=1.1}
     \label{fig8}
 \end{subfigure}
 \hfill
 \begin{subfigure}{0.3\textwidth}
     \includegraphics[width=\textwidth]{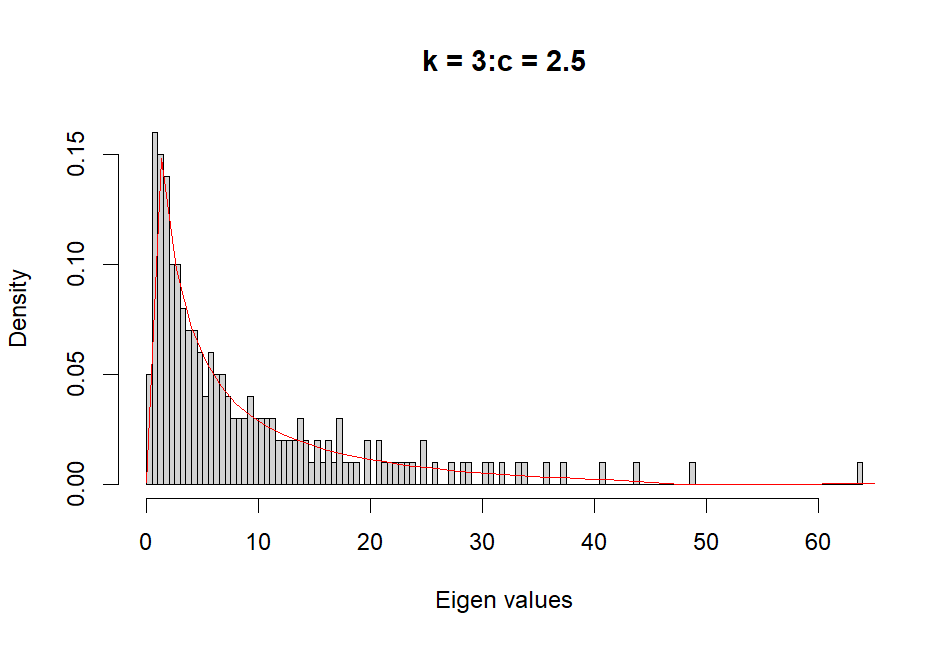}
     \caption{K=3:c=2.5}
     \label{fig9}
 \end{subfigure}
 \caption{\textbf{Example 1}: Simulated vs. \textcolor{red}{Theoretical} limit distributions for various values of $K$ and $c$ with standard Gaussian innovations.}
 \label{Figure1}
\end{figure}

\begin{figure}[ht]
 \begin{subfigure}{0.3\textwidth}
     \includegraphics[width=\textwidth]{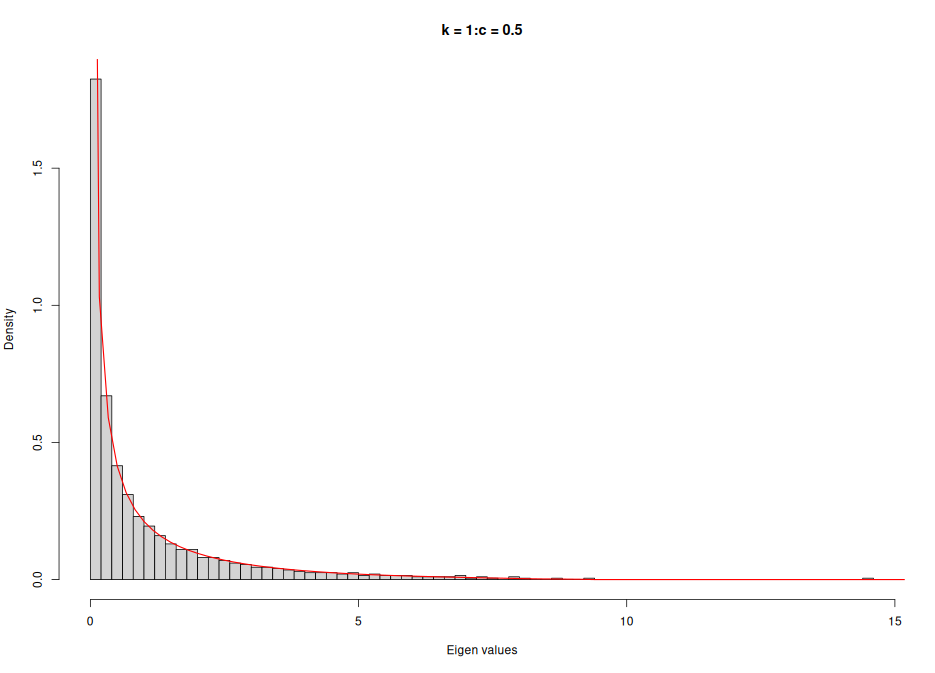}
     \caption{K=1:c=0.5}
     \label{fig10}
 \end{subfigure}
 \hfill
 \begin{subfigure}{0.3\textwidth}
     \includegraphics[width=\textwidth]{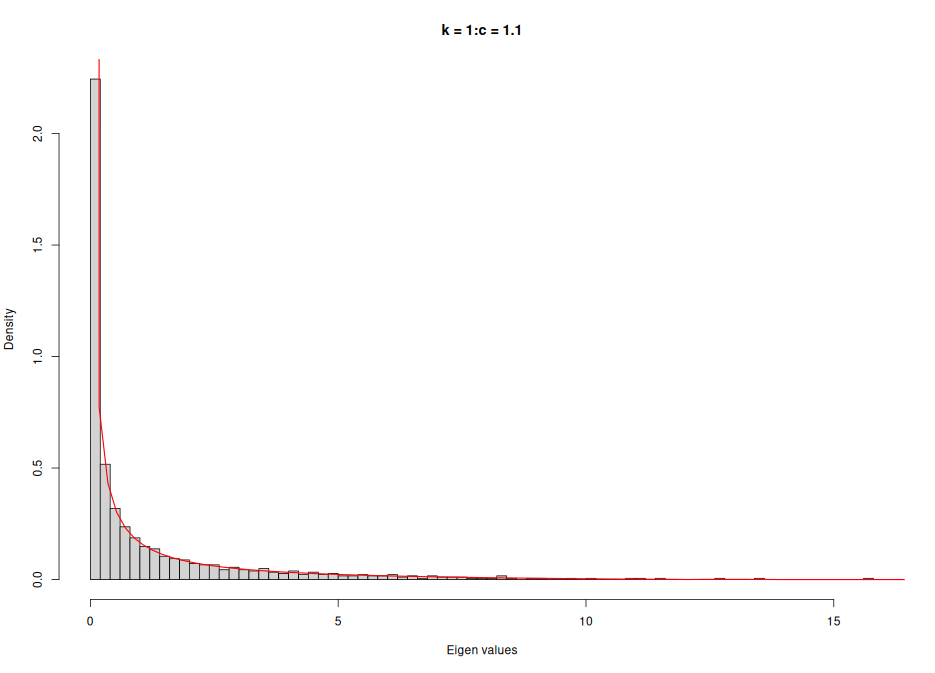}
     \caption{K=1:c=1.1}
     \label{fig11}
 \end{subfigure}
 \hfill
 \begin{subfigure}{0.3\textwidth}
     \includegraphics[width=\textwidth]{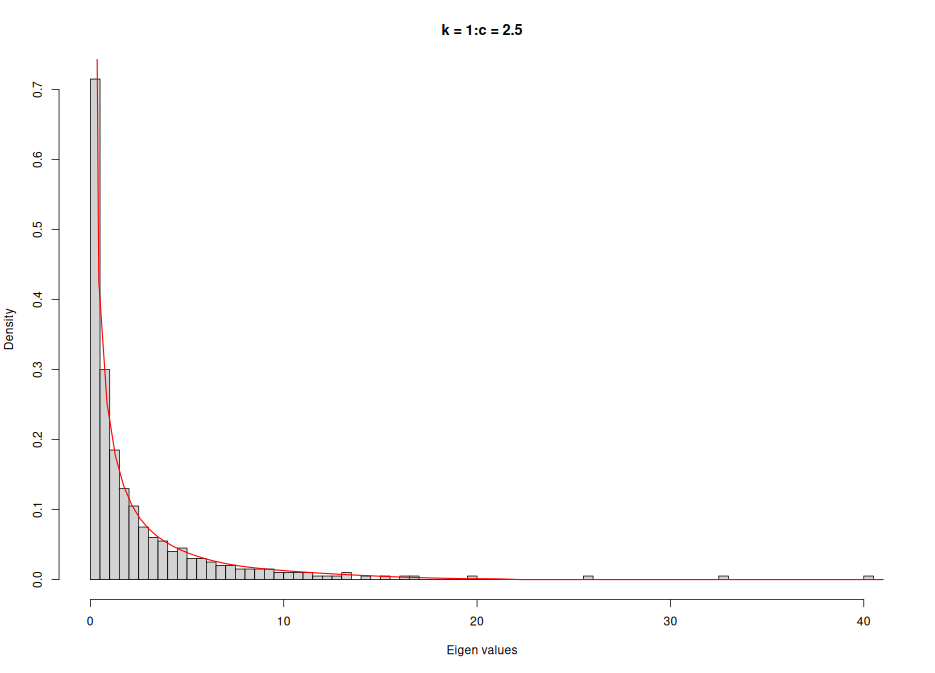}
     \caption{K=1:c=2.5}
     \label{fig12}
 \end{subfigure}
  \medskip
 \begin{subfigure}{0.3\textwidth}
     \includegraphics[width=\textwidth]{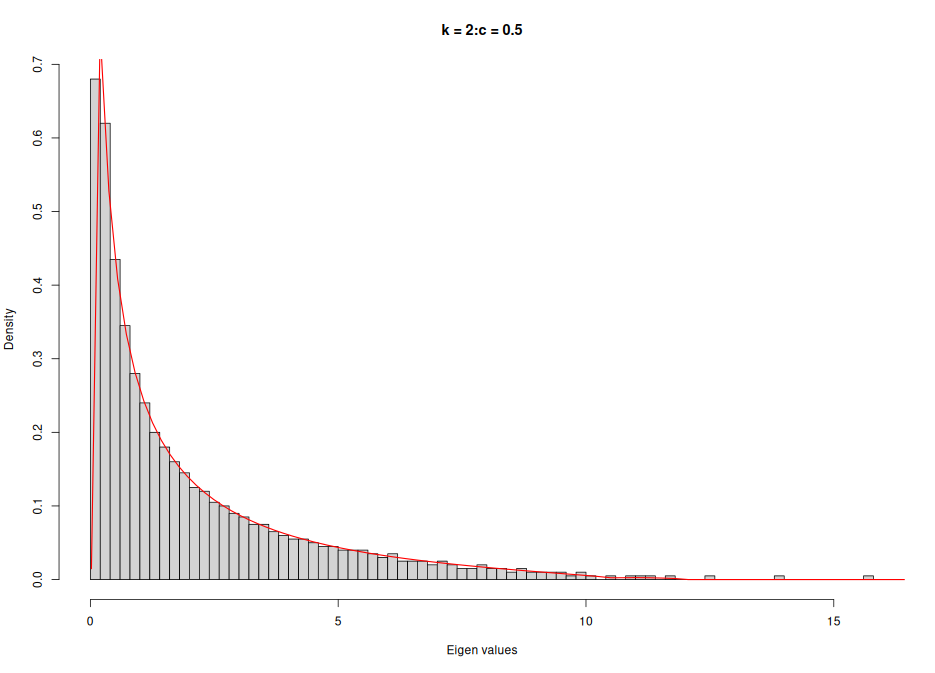}
     \caption{K=2:c=0.5}
     \label{fig13}
 \end{subfigure}
 \hfill
 \begin{subfigure}{0.3\textwidth}
     \includegraphics[width=\textwidth]{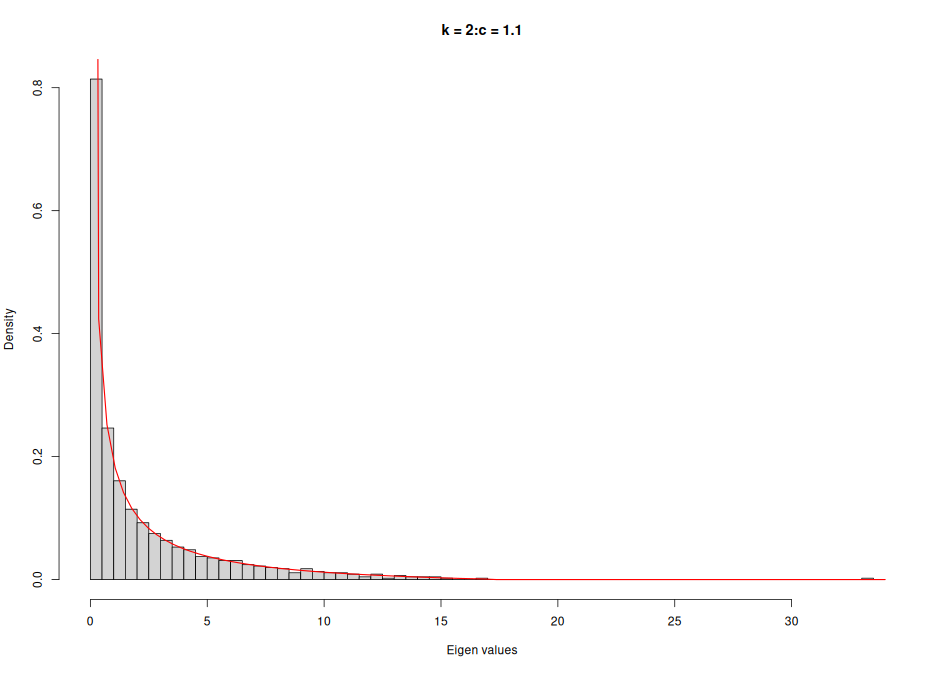}
     \caption{K=2:c=1.1}
     \label{fig14}
 \end{subfigure}
 \hfill
 \begin{subfigure}{0.3\textwidth}
     \includegraphics[width=\textwidth]{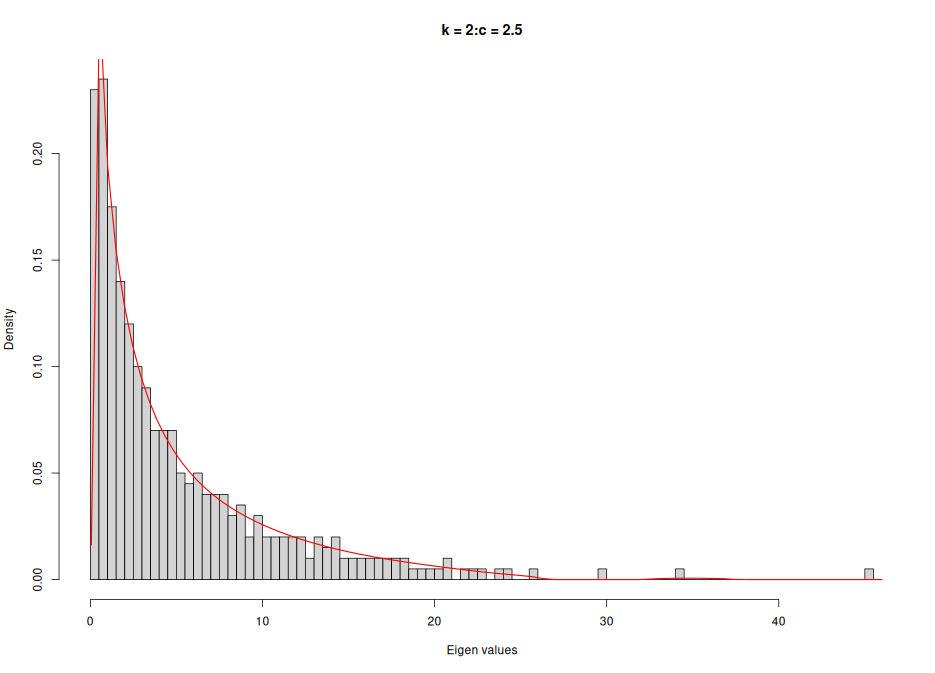}
     \caption{K=2:c=2.5}
     \label{fig15}
 \end{subfigure}
  \medskip
   \begin{subfigure}{0.3\textwidth}
     \includegraphics[width=\textwidth]{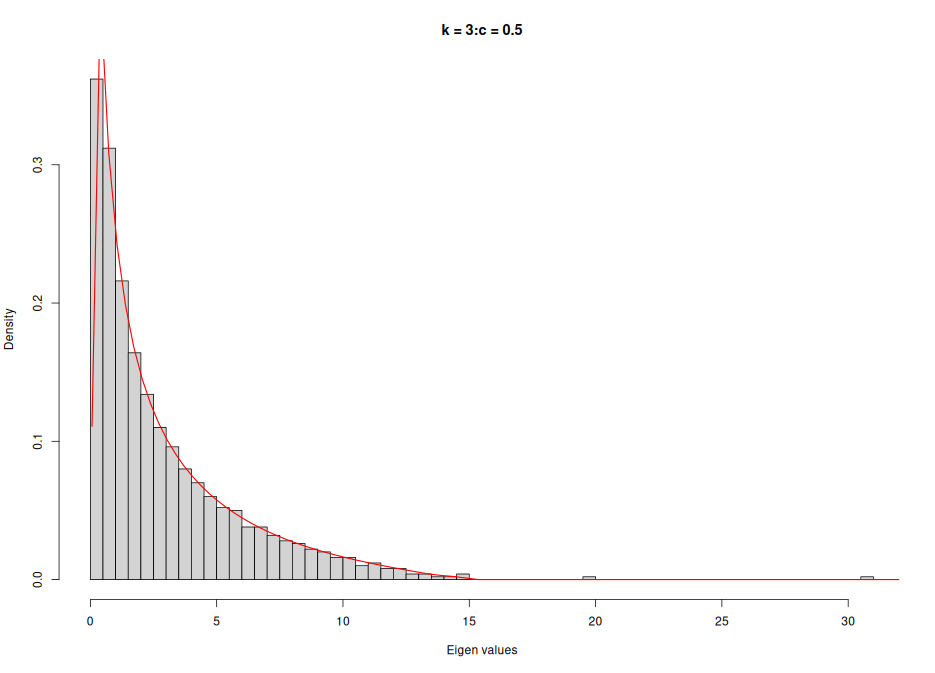}
     \caption{K=3:c=0.5}
     \label{fig16}
 \end{subfigure}
 \hfill
 \begin{subfigure}{0.3\textwidth}
     \includegraphics[width=\textwidth]{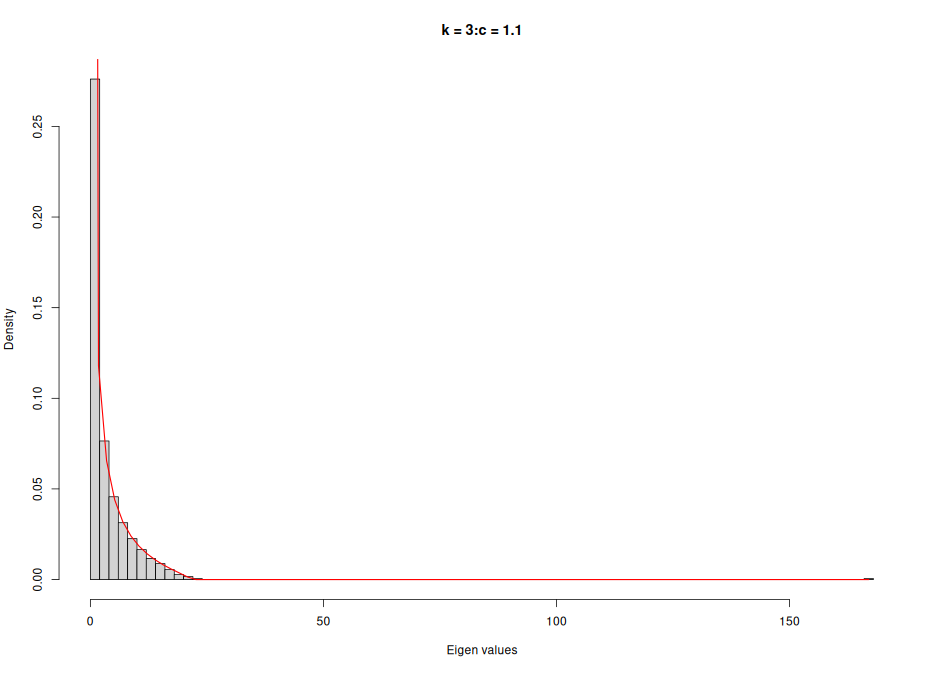}
     \caption{K=3:c=1.1}
     \label{fig17}
 \end{subfigure}
 \hfill
 \begin{subfigure}{0.3\textwidth}
     \includegraphics[width=\textwidth]{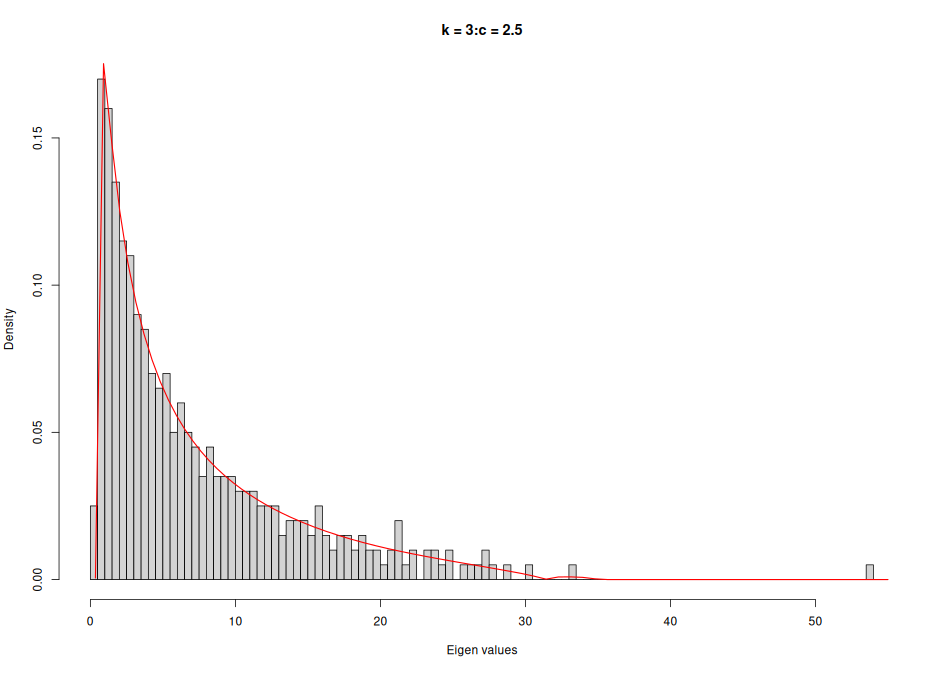}
     \caption{K=3:c=2.5}
     \label{fig18}
 \end{subfigure}
 \caption{\textbf{Example 2}: Simulated vs. \textcolor{red}{Theoretical} limit distributions for various values of $K$ and $c$ when the innovations follow $t$-distribution with $3$ degrees of freedom.}
 \label{Figure2}
\end{figure}

\printbibliography

\newpage
\appendix
\section{Some basic results}\label{R123}
\begin{itemize}
 \item[\textbf{R0}:] \textbf{Resolvent identity}: 
 \begin{align}\label{R0}
     A^{-1} - B^{-1} = A^{-1}(B - A)B^{-1} = B^{-1}(B - A)A^{-1}.
 \end{align}
    \item[\textbf{R1}:] For Hermitian matrices $A, B \in \mathbb{C}^{p \times p}$, by Lemma 2.4 of \cite{BaiSilv95}, we have
    \begin{align}\label{R1}
    ||F^{A} - F^{B}|| \leq \frac{1}{p}\operatorname{rank}(A - B).
    \end{align}
    \item[\textbf{R2}:]  From Lemma 2.1 of \cite{BaiSilv95}, for a rectangular matrix, we have
    \begin{align}\label{R2}
      \operatorname{rank}(A) \leq \sum_{i,j} \mathbbm{1}_{\{a_{ij} \neq 0\}}.  
    \end{align}
    \item[\textbf{R3}:] For rectangular matrices $A,B,P,Q, X$ of compatible dimensions, we have 
    \begin{align}\label{R3}
        \operatorname{rank}(AXB-PXQ) \leq \operatorname{rank}(A-P)+\operatorname{rank}(B-Q).
    \end{align}
    \item[\textbf{R4}:] For a p.s.d. matrix B and any square matrix A, we have 
    \begin{align}\label{R5}
        |\operatorname{trace}(AB)| \leq ||A||_{op} \operatorname{trace}(B).
    \end{align}
    \item[\textbf{R5}:] For $N \times N$ matrices $A, B$, we have 
    \begin{align}\label{R6}
        |\operatorname{trace}(AB)| \leq N||A||_{op}\,||B||_{op}.
    \end{align}
    \item[\textbf{R6}:] By Lemma B.18 of \cite{BaiSilv09}, we have
    \begin{align}\label{Levy_vs_Uniform}
        L(F, G) \leq ||F - G||_\infty.
    \end{align}
    \item[\textbf{R7}:] Let $A$ and $B$ be two $p \times n$ matrices and the ESDs of $S = AA^*$ and $\bar{S} = BB^*$ be denoted by $F^S$ and $F^{\bar{S}}$. Then, by Corollary A.42 of \cite{BaiSilv09}, we have
    \begin{align}\label{CorollaryA42}
        L^4(F^S, F^{\bar{S}}) \leq \frac{2}{p^2}(\operatorname{trace}(AA^* + BB^*))(\operatorname{trace}[(A-A^*)(B-B^*)]).
    \end{align}
    \item[\textbf{R8}:] Let $X = R+\mathbbm{i}S$, where $R,S$ are real matrices of order $p\times n$. Then,
\begin{align}\label{partial_derivative_formula}
    \frac{\partial AXB}{\partial r_{ij}} = A_{\cdot i}B_{j\cdot} \quad \text{and } \quad \frac{\partial AXB}{\partial s_{ij}} = \mathbbm{i}A_{\cdot i}B_{j\cdot}.
\end{align}
\end{itemize}

\begin{lemma}
With $f$ defined in (\ref{defining_f}) and denoting $\partial_x$ instead of $\frac{d}{dx}$, we have
$$\partial_x^3 f = \frac{1}{p}\operatorname{trace}\bigg(-6\mathcal{Q}(\partial_x \mathcal{S}) \mathcal{Q} (\partial_x \mathcal{S}) \mathcal{Q} (\partial_x \mathcal{S}) \mathcal{Q} + 3 \mathcal{Q}(\partial_x \mathcal{S}) \mathcal{Q}(\partial_x^2 \mathcal{S}) \mathcal{Q} + 3 \mathcal{Q}(\partial_x^2 \mathcal{S}) \mathcal{Q}(\partial_x \mathcal{S}) \mathcal{Q}\bigg).$$
\end{lemma}

\begin{proof}
Following calculations done in \cite{Chatterjee} and Supplementary Material of \cite{Staionary2}, we differentiate both sides w.r.t. $x$ to get the following expression:
\begin{align*}
    &\mathcal{Q}(S-zI)=I\\
    \implies& (\partial_x \mathcal{Q})\mathcal{S}+\mathcal{Q}\partial_x\mathcal{S} = z\partial_x\mathcal{Q}\\
    \implies& \mathcal{Q}(\partial_x\mathcal{S}) = -(\partial_x\mathcal{Q}) (\mathcal{S}-zI)\\
    \implies& \partial_x\mathcal{Q} = -\mathcal{Q}(\partial_x\mathcal{S})\mathcal{Q}.
\end{align*}

Differentiating the above w.r.t. $x$ again, we derive
\begin{align*}
    \partial_x^2\mathcal{Q} = 2\mathcal{Q}(\partial_x\mathcal{S})\mathcal{Q}(\partial_x\mathcal{S})\mathcal{Q}-\mathcal{Q}(\partial_x^2\mathcal{S})\mathcal{Q}.
\end{align*}
Differentiating once again and noting that $\partial_x^3S = 0$, we derive the following expression:
\begin{align}\label{third_derivative_formula}
    \partial_x^3 \mathcal{Q} = -6\mathcal{Q}(\partial_x \mathcal{S}) \mathcal{Q} (\partial_x \mathcal{S}) \mathcal{Q} (\partial_x \mathcal{S}) \mathcal{Q} + 3 \mathcal{Q}(\partial_x \mathcal{S}) \mathcal{Q}(\partial_x^2 \mathcal{S}) \mathcal{Q} + 3 \mathcal{Q}(\partial_x^2 \mathcal{S}) \mathcal{Q}(\partial_x \mathcal{S}) \mathcal{Q}.
\end{align}
This concludes the proof.
\end{proof}

\begin{lemma}\label{Levy_Stieltjes}
    Let $\{F_n, G_n\}_{n=1}^\infty$ be sequences of distribution functions on $\mathbb{R}$ with $s_{F_n}(z), s_{G_n}(z)$ denoting their respective Stieltjes transforms at $z \in \mathbb{C}_+$. If $L(F_n, G_n) \rightarrow 0$, then $|s_{F_n}(z) - s_{G_n}(z)| \rightarrow 0$.
\end{lemma}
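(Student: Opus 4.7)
The plan is to recast $s_{F_n}(z)-s_{G_n}(z)$ as an integral against the difference of the distribution functions themselves via integration by parts, and then to bound the resulting integral using the defining inequality of the Lévy metric. Fix $z=u+\mathbbm{i}v$ with $v>0$ and write $\epsilon_n:=L(F_n,G_n)\to 0$. Since $F_n,G_n$ are probability distribution functions (so $F_n-G_n$ vanishes at $\pm\infty$) and the function $x\mapsto 1/(x-z)$ is smooth with derivative $-1/(x-z)^2$ pointwise bounded by $1/v^2$, a standard integration by parts gives $s_{F_n}(z)=\int F_n(x)/(x-z)^2\,dx$ (the boundary contributions vanish because $1/(x-z)\to 0$ at $\pm\infty$), and hence
\begin{align*}
s_{F_n}(z)-s_{G_n}(z)=\int_{\mathbb{R}}\frac{F_n(x)-G_n(x)}{(x-z)^{2}}\,dx.
\end{align*}

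The second step is to invoke the defining inequality of the Lévy metric. From $F_n(x-\epsilon_n)-\epsilon_n\le G_n(x)\le F_n(x+\epsilon_n)+\epsilon_n$ one extracts the pointwise bound $|F_n(x)-G_n(x)|\le [F_n(x+\epsilon_n)-F_n(x-\epsilon_n)]+\epsilon_n$. Plugging this in, the contribution of the additive $\epsilon_n$ piece is controlled by $\epsilon_n\int |x-z|^{-2}\,dx=\pi\epsilon_n/v$. For the remaining oscillation term I would apply Fubini's theorem to rewrite
\begin{align*}
\int[F_n(x+\epsilon_n)-F_n(x-\epsilon_n)]\,\frac{dx}{|x-z|^{2}}=\int_{\mathbb{R}}\Bigl(\int_{y-\epsilon_n}^{y+\epsilon_n}\frac{dx}{|x-z|^{2}}\Bigr)\,dF_n(y)\le\frac{2\epsilon_n}{v^{2}},
\end{align*}
using $|x-z|^{2}\ge v^{2}$ and that $F_n$ is a probability measure. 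Collecting both pieces yields $|s_{F_n}(z)-s_{G_n}(z)|\le \epsilon_n\bigl(\pi/v+2/v^{2}\bigr)\to 0$, which is the desired conclusion, in fact with an explicit quantitative rate in $L(F_n,G_n)$.

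I do not anticipate any serious obstacle here; the argument is a routine manipulation of Stieltjes transforms and the Lévy metric. The only minor point requiring a moment of care is the integration-by-parts step, where the identity uses $F_n$ itself (not $1-F_n$), and one must verify that $F_n(x)/(x-z)\to 0$ as $x\to+\infty$ even though $F_n(x)\to 1$; this is immediate from $1/(x-z)\to 0$. The structural reason the proof succeeds is that the test function $1/(x-z)$ is globally Lipschitz on $\mathbb{R}$ whenever $\Im z>0$, with Lipschitz constant $1/v^{2}$, so the local-oscillation control provided by the Lévy metric is precisely matched to the regularity of the integrand.
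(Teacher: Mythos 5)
Your proof is correct, and it takes a genuinely different route from the paper. The paper appeals to the equivalence between the L\'evy metric and the bounded-Lipschitz metric (citing Dudley's Corollary~18.4 and Theorem~8.3 to get $\tfrac{1}{2}\beta \le L \le 3\sqrt{\beta}$), then observes that $g_z(x)=(x-z)^{-1}$ has finite $\|\cdot\|_{BL}$ norm, so that $\beta(F_n,G_n)\le 2L(F_n,G_n)\to 0$ controls $|\int g_z\,dF_n-\int g_z\,dG_n|$. You instead work directly with the defining inequality of the L\'evy metric: integrate by parts to express the Stieltjes-transform difference as $\int (F_n-G_n)(x-z)^{-2}\,dx$, split $|F_n-G_n|$ into a local oscillation term plus an $\epsilon_n$ term, and control the oscillation term by Fubini and the uniform bound $|x-z|^{2}\ge v^{2}$. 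This is entirely self-contained (no external metric-comparison theorem needed) and yields the explicit linear rate $|s_{F_n}(z)-s_{G_n}(z)|\le L(F_n,G_n)\,(\pi/v + 2/v^{2})$, whereas the paper's argument, as written, only produces a qualitative limit (though one could extract a rate from the BL bound too). One cosmetic point: the infimum defining $L(F_n,G_n)$ need not be attained exactly as stated, so one should strictly take $\epsilon_n$ slightly larger than $L(F_n,G_n)$ (e.g.\ $\epsilon_n = L(F_n,G_n)+1/n$); this does not affect the conclusion.
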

\begin{proof}
Let $\mathcal{P}(\mathbb{R})$ be the set of all probability distribution functions on $\mathbb{R}$. Then the bounded Lipschitz metric is defined as follows: 
$$\beta: \mathcal{P}(\mathbb{R}) \times \mathcal{P}(\mathbb{R}) \rightarrow \mathbb{R}_{+} \text{, where } \beta(F, G) := \underset{}{\sup} \bigg\{\bigg|\int hdF - \int hdG\bigg|: ||h||_{BL} \leq 1\bigg\},$$

$$\text{and, }||h||_{BL} = \sup\{|h(x)|: x \in \mathbb{R}\} + \underset{x \neq y}{\sup}\dfrac{|h(x) - h(y)|}{|x - y|}.$$

From Corollary 18.4 and Theorem 8.3 of \cite{Dudley}, we have the following relationship between Levy (L) and bounded Lipschitz ($\beta$) metrics:
\begin{align}\label{Levy_B}
\frac{1}{2}\beta(F, G) \leq L(F, G) \leq 3\sqrt{\beta(F, G)}.
\end{align}

Fix $z \in \mathbb{C}^R$ arbitrarily. Define $g_z(x) := (x - z)^{-1}$. Note that, $|g_z(x)| \leq 1/|\Im(z)|$ $\forall x \in \mathbb{R}$. Therefore, we have
$$|g_z(x_1) - g_z(x_2)| = \bigg|\frac{1}{x_1 - z} - \frac{1}{x_2 - z}\bigg| = \frac{|x_1 - x_2|}{|x_1 - z||x_2-z|} \leq \frac{|x_1 - x_2|}{\Im^2(z)}.$$

Note that, $||g_z||_{BL} \leq {1}/{|\Im(z)|} + {1}/{\Im^2(z)} < \infty$. Then, $||g||_{BL} = 1$ where, $g := {g_z}/{||g_z||_{BL}}$.

By (\ref{Levy_B}) and (\ref{Levy_vs_uniform}), we have 
\begin{align*}
L(F_n, G_n) \rightarrow 0
\longleftrightarrow  & \beta(F_n, G_n) \rightarrow 0\\
\implies & \bigg|\int_\mathbb{R}g(x)dF_n(x) - \int_\mathbb{R}g(x)dG_n(x)\bigg| \rightarrow 0\\
\implies & \bigg|\int_\mathbb{R}\frac{1}{x - z}dF_n(x) - \int_\mathbb{R}\frac{1}{x-z}dG_n(x)\bigg| \rightarrow 0\\
\implies & |s_{F_n}(z) - s_{G_n}(z)| \longrightarrow 0.
\end{align*}
\end{proof}

\begin{lemma}\label{lA.3}
    Let $\{X_{jn}, Y_{jn}: 1 \leq j \leq n\}_{n=1}^\infty$ be triangular arrays of random variables. Suppose $\underset{1 \leq j \leq n}{\max}|X_{jn}| \xrightarrow{a.s.} 0$ and $\underset{1 \leq j \leq n}{\max}|Y_{jn}| \xrightarrow{a.s.} 0$. Then, $\underset{1 \leq j \leq n}{\max}|X_{jn} + Y_{jn}| \xrightarrow{a.s.} 0$.
\end{lemma}

\begin{proof}
Let $A_x := \{\omega: \underset{n \rightarrow \infty}{\lim}\underset{1 \leq j \leq n}{\max}|X_{jn}(\omega)| = 0\}$, $A_y := \{\omega: \underset{n \rightarrow \infty}{\lim}\underset{1 \leq j \leq n}{\max}|Y_{jn}(\omega)| = 0\}$. Then $\mathbb{P}(A_x) = 1 = \mathbb{P}(A_y)$. Then, for all $\omega \in A_x \cap A_y$, we have $0 \leq |X_{jn}(\omega) + Y_{jn}(\omega)| \leq |X_{jn}(\omega)| + |Y_{jn}(\omega)|$. Hence, 
$\underset{n \rightarrow \infty}{\lim}\underset{1 \leq j \leq n}{\max}|X_{jn}(\omega) + Y_{jn}(\omega)| = 0$. But, $\mathbb{P}(A_x \cap A_y) = 1$. Therefore, the result follows.
\end{proof}

\begin{lemma}\label{lA.4}
     Let $\{A_{jn}, B_{jn}, C_{jn}, D_{jn}: 1 \leq j \leq n\}_{n=1}^\infty$ be triangular arrays of random variables. Suppose $\underset{1 \leq j \leq n}{\max}|A_{jn} - C_{jn}| \xrightarrow{a.s.} 0$ and $\underset{1 \leq j \leq n}{\max}|B_{jn} - D_{jn}| \xrightarrow{a.s.} 0$ and for some $B_1, B_2 \geq 0$, there exists $N_0 \in \mathbb{N}$ such that $|C_{jn}| \leq B_1$ a.s. and $|D_{jn}| \leq B_2$ a.s. when $n > N_0$. Then, $\underset{1 \leq j \leq n}{\max}|A_{jn}B_{jn} - C_{jn}D_{jn}| \xrightarrow{a.s.} 0$.
\end{lemma}

\begin{proof}

Let $\Omega_1 = \{\omega: \underset{n \rightarrow \infty}{\lim}\underset{1 \leq j \leq n}{\max}|A_{jn}(\omega) - C_{jn}(\omega)|= 0\}$, $\Omega_2 = \{\omega: \underset{n \rightarrow \infty}{\lim}\underset{1 \leq j \leq n}{\max}|B_{jn}(\omega) - D_{jn}(\omega)|= 0\}$, $\Omega_3 = \{\omega: |C_{jn}(\omega)| \leq B_1 \text{ for } n > N_0\}$ and $\Omega_4 = \{\omega: |D_{jn}(\omega)| \leq B_2 \text{ for } n > N_0\}$. Then $\Omega_0 = \cap_{j=1}^4\Omega_j$ is a set of probability $1$. Then, for all $\omega \in \Omega_0$, $\underset{1 \leq j \leq n}{\max} |B_{jn}(\omega)| \leq B_2$ eventually for large $n$. Therefore for  $\omega \in \Omega_0$ and large $n$, we get the following:
\begin{align*}
\underset{1 \leq j \leq n}{\max}|A_{jn}B_{jn} - C_{jn}D_{jn}| 
&\leq \underset{1 \leq j \leq n}{\max}|A_{jn} - C_{jn}||B_{jn}| + \underset{1 \leq j \leq n}{\max}|C_{jn}||B_{jn} - D_{jn}|\\
&\leq B_2 \underset{1 \leq j \leq n}{\max}|A_{jn} - C_{jn}| + B_1 \underset{1 \leq j \leq n}{\max}|B_{jn} - D_{jn}| \xrightarrow{a.s.} 0.
\end{align*}
\end{proof}

\begin{lemma}\label{lA.5}
     Let $\{X_{jn}, Y_{jn}: 1 \leq j \leq n\}_{n=1}^\infty$ be triangular arrays of random variables such that $\underset{1 \leq j \leq n}{\max}|X_{jn} - Y_{jn}| \xrightarrow{a.s.} 0$. Then $|\frac{1}{n}\sum_{j=1}^n (X_{jn} - Y_{jn})| \xrightarrow{a.s.} 0$.
\end{lemma}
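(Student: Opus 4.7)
The plan is to reduce this to the hypothesis via two elementary inequalities: the triangle inequality to pull the absolute value inside the sum, and then the bound of an average by its maximum. Concretely, for every $n$ and every $\omega$, one has
\begin{align*}
    \bigg|\frac{1}{n}\sum_{j=1}^n (X_{jn}(\omega) - Y_{jn}(\omega))\bigg| \leq \frac{1}{n}\sum_{j=1}^n |X_{jn}(\omega) - Y_{jn}(\omega)| \leq \underset{1 \leq j \leq n}{\max}|X_{jn}(\omega) - Y_{jn}(\omega)|.
\end{align*}
So the inequality chain is purely deterministic and holds pointwise in $\omega$.

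Next I would invoke the hypothesis. Let $\Omega_0 = \{\omega : \underset{n \rightarrow \infty}{\lim}\underset{1 \leq j \leq n}{\max}|X_{jn}(\omega) - Y_{jn}(\omega)| = 0\}$, which has probability one by assumption. For every $\omega \in \Omega_0$, the right-hand side above tends to $0$, hence by the sandwich argument the left-hand side tends to $0$ as well. Since this happens on a set of full probability, the almost sure convergence $|\frac{1}{n}\sum_{j=1}^n (X_{jn} - Y_{jn})| \xrightarrow{a.s.} 0$ follows immediately.

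There is essentially no obstacle here; the lemma is a one-line consequence of the standard fact that a Cesaro-type average is dominated by the corresponding maximum, combined with the hypothesis. The only thing worth being careful about is to carry out the argument pointwise on the almost sure set rather than attempting to manipulate modes of convergence, which keeps the reasoning transparent and avoids any uniform integrability considerations.
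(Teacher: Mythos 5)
Your proof is correct and follows essentially the same route as the paper: bound the average by the maximum via the triangle inequality, then pass to the almost sure set where the maximum tends to zero. The paper phrases the final step with an explicit $\epsilon$ rather than a pointwise sandwich, but this is merely a cosmetic difference.
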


\begin{proof}
    Let $M_n := \underset{1 \leq j \leq n}{\max}|X_{jn} - Y_{jn}|$. We have, 
    
    $$\bigg|\frac{1}{n}\sum_{j=1}^n (X_{jn} - Y_{jn})\bigg| \leq \frac{1}{n}\sum_{j=1}^n |X_{jn} - Y_{jn}| \leq M_n.$$ Let $\epsilon > 0$ be arbitrary. There exists $\Omega_0 \subset \Omega$ such that $\mathbb{P}(\Omega_0) = 1$ and for all $\omega\in \Omega_0$, we have $M_n(\omega) < \epsilon$ for sufficiently large $n \in \mathbb{N}$. Then, 
    
    $$\mathbb{P}\bigg(\{\omega: |\frac{1}{n}\sum_{j=1}^n (X_{jn} - Y_{jn})| < \epsilon\}\bigg) = 1.$$ 
    Since $\epsilon > 0$ is arbitrary, the result follows.
\end{proof}

We state the following result (Lemma B.26 of \cite{BaiSilv09}) without proof.
\begin{lemma}\label{Burkholder}
Let $A = (a_{ij})$ be an $n \times n$ non-random matrix and $x = (x_1,\ldots, x_n)^T$ be a vector of independent entries. Suppose $\mathbb{E}x_i = 0, \mathbb{E}|x_i|^2 = 1$, and $\mathbb{E}|x_i|^l \leq \nu_l$. Then for $k \geq 1$, there exists $C_k > 0$ independent of $n$ such that 
\begin{align*}
  \mathbb{E}|x^*Ax - \operatorname{trace}(A)|^k \leq C_k\bigg((\nu_4\operatorname{trace}(AA^*))^\frac{k}{2} + \nu_{2k}\operatorname{trace}\{(AA^*)^\frac{k}{2}\}\bigg).  
\end{align*}
\end{lemma}

For a deterministic matrix $A$ with $||A||_{op} < \infty$, let $B = {A}/{||A||_{op}}$. Then, $||B||_{op} = 1$ and by \ref{R6}, $\operatorname{trace}(BB^*) \leq n||B||_{op}^2 = n$ and $\operatorname{trace}\{(BB^*)^\frac{k}{2}\} \leq n||B||_{op}^k = n$. Therefore, by Lemma \ref{Burkholder}, we have
\begin{align}\label{I1}
    &\mathbb{E}|x^*Bx - \operatorname{trace}(B)|^k 
    \leq C_k\bigg((\nu_4\operatorname{trace}(BB^*))^\frac{k}{2} + \nu_{2k}\operatorname{trace}\{(BB^*)^\frac{k}{2}\}\bigg)\\ \notag
    \implies & \frac{\mathbb{E}|x^*Ax - \operatorname{trace}(A)|^k }{||A||_{op}^k} \leq C_k[(n\nu_4)^\frac{k}{2} + n\nu_{2k}]\\ \notag
    \implies & \mathbb{E}|x^*Ax - \operatorname{trace}(A)|^k  \leq C_k||A||_{op}^k[(n\nu_4)^\frac{k}{2} + n\nu_{2k}]. 
\end{align}
We will be using this simplified form of the inequality going forward.

\begin{lemma}\label{quadraticForm}
     Let $\{x_{jn}: j \in [n]\}_{n =1}^\infty$ be a triangular array of complex valued random vectors in $\mathbb{C}^p$ with independent entries. For $r \in [n]$, denote the $r^{th}$ element of $x_{jn}$ as $x_{jn}^{(r)}$. Suppose $\mathbb{E}x_{jn}^{(r)} = 0, \mathbb{E}|x_{jn}^{(r)}|^2 = 1$ and for $k \geq 1$ and  $|x_{jn}| \leq n^b$ for some $0<b < \frac{1}{2}$. Suppose $A_j \in \mathbb{C}^{p \times p}$ is independent of $x_{jn}$ and $||A_j||_{op} \leq B$ a.s. for some $B > 0$. Then, $$\underset{1 \leq j \leq n}{\max} \absmod{\dfrac{1}{n}x_{jn}^*Ax_{jn} - \dfrac{1}{n}\operatorname{trace}(A_j)} \xrightarrow{a.s} 0.$$
\end{lemma}
\begin{proof}
   We start by constructing the following bounds. \begin{enumerate}
       \item $\nu_4 := \underset{j;n}{\sup} \mathbb{E}|x_{jn}|^4 \leq \sup n^{2b}\mathbb{E}|x_{jn}|^2 = n^{2b}$.
       \item In general, when $k \geq 2$, we similarly deduce that $\nu_{2k}= \underset{j;n}{\sup} \mathbb{E}|x_{jn}|^{2k} \leq n^{2b(k-1)}$.
   \end{enumerate}
For arbitrary $\delta > 0$ and $k \geq 1$, we have
\begin{align*}
p_n&:= \mathbb{P}\bigg(\underset{1 \leq j \leq n}{\max} \left|\dfrac{1}{n}x_{jn}^*A_jx_{jn} - \dfrac{1}{n}\operatorname{trace}(A_j)\right| > \delta\bigg)\\
 &\leq \sum_{j=1}^{n} \mathbb{P}\bigg(\left|\dfrac{1}{n}x_{jn}^*A_jx_{jn} - \dfrac{1}{n}\operatorname{trace}(A_j)\right| > \delta\bigg) \text{, by union bound}\\
&\leq \sum_{j=1}^{n} \dfrac{\mathbb{E}\left|\dfrac{1}{n}x_{jn}^*A_jx_{jn} - \dfrac{1}{n}\operatorname{trace}(A_j)\right|^{k}}{\delta^{k}} \text{, for any } k \in \mathbb{N}\\
&= \sum_{j=1}^{n} \dfrac{\mathbb{E} \bigg(\mathbb{E} \bigg[|\dfrac{1}{n}x_{jn}^*A_jx_{jn} - \dfrac{1}{n}\operatorname{trace}(A_j)|^k\bigg|A_j\bigg]\bigg)}{\delta^{k}}\\
&\leq \sum_{j=1}^{n} \dfrac{\mathbb{E}||A_j||_{op}^{k}C_{k}((n\nu_4)^\frac{k}{2} + n\nu_{2k})}{n^{k}\delta^{k}}  \text{ by } (\ref{I1})\\
&\leq \sum_{j=1}^{n} \dfrac{K[(n^{1+2b})^\frac{k}{2} + n^{1+2b(k-1)}]}{n^{k}} \text{, where } K = C_k\bigg(\frac{B}{\delta}\bigg)^k,\\
&= \frac{K}{n^{k(\frac{1}{2}-b) - 1}} + \frac{K}{n^{k(1 - 2b) + 2b-2}}.
\end{align*}
Since $b < 0.5$ and the above inequality holds for arbitrary $k \in \mathbb{N}$, we can choose $k \in \mathbb{N}$ large enough so that $\min\{k(0.5-b) - 1, k(1-2b)+2b-2\} > 1$ to ensure that $\sum_{n=1}^\infty p_n$ converges. Therefore, by Borel Cantelli lemma, we have the result.
\end{proof}

\begin{corollary}\label{quadraticForm_xy}
Let $\{u_{jn},v_{jn}:j \in [n]\}_{n=1}^\infty$ be triangular arrays and $A_j$ be complex matrices as in Lemma \ref{quadraticForm} with $u_{jn}$ and $v_{jn}$ independent of each other. Then,  
$$\underset{1 \leq j \leq n}{\max}\absmod{\frac{1}{n}u_{jn}^*A_jv_{jn}}\xrightarrow{ a.s.} 0.$$
\end{corollary}
\begin{proof}
    Let $R_j(u,v) := \frac{1}{n}u_{jn}^*A_jv_{jn}$. Similarly, $R_j(v,v), R_j(u,u), R_j(v,u)$ are defined in the obvious manner. Let $x_{jn} = \frac{1}{\sqrt{2}}(u_{jn} + v_{jn})$. Now applying Lemma \ref{quadraticForm}, we get
\begin{align}\label{sum}
    &\underset{1 \leq j \leq n}{\max}\absmod{\frac{1}{n}x_{jn}^*A_jx_{jn} - \frac{1}{n}\operatorname{trace}(A_j)} \xrightarrow{a.s.} 0\\ \notag
    \implies& \underset{1 \leq j \leq n}{\max}\absmod{\frac{1}{2}(R_j(u,u) - T_j) + \frac{1}{2}(R_j(v,v) - T_j) + \frac{1}{2}(R_j(u,v) + R_j(v,u))} \xrightarrow{a.s.} 0,
\end{align}
where $T_j := \frac{1}{n}\operatorname{trace}(A_j)$. Now setting $x_{jn} = \frac{1}{\sqrt{2}}(u_{jn} + \mathbbm{i}v_{jn})$ and by Lemma \ref{quadraticForm}, we get
\begin{align}\label{antisum}
    &\underset{1 \leq j \leq n}{\max}\absmod{\frac{1}{n}x_{jn}^*A_jx_{jn} - \frac{1}{n}\operatorname{trace}(A_j)} \xrightarrow{a.s.} 0\\\notag
    \implies& \underset{1 \leq j \leq n}{\max}\absmod{\frac{1}{2}(R_j(u,u) - T_j) + \frac{1}{2}(R_j(v,v) - T_j) + \frac{1}{2}\mathbbm{i}(R_j(u,v) - R_j(v,u))} \xrightarrow{a.s.} 0.
\end{align}
Using Lemma \ref{lA.3} on (\ref{sum}) and (\ref{antisum}), we get $\underset{1 \leq j \leq n}{\max}|R_j(u,v)| \xrightarrow{a.s.} 0$.
\end{proof}

\section{Existence of solution under A1-A2}
\begin{lemma}\label{Lemma:OneRankTracePerturbation}
For $z \in \mathbb{C}_+$, $j \in [n]$ and a deterministic matrix $M \in \mathbb{C}^{p \times p}$, we have
    \begin{align*}
        \operatorname{trace}\{M(Q(z) - Q_{-j}(z))\} \leq \frac{||M||_{op}}{\Im(z)}.
    \end{align*}
\end{lemma}
\begin{proof}
    The proof is similar to that of Lemma 2.6 of \cite{BaiSilv95}. Note that the result holds even without \textbf{A1-A2}.
\end{proof}
\begin{lemma}\label{ConcentrationLemma}
Let $z \in \mathbb{C}_+$ and $r \in [K]$. Under \textbf{A1}, we have \begin{enumerate}
    \item[] $|h_{rn}(z) - \mathbb{E}h_{rn}(z)| \xrightarrow{a.s.} 0\,\text{, } |g_{rn}(z) - \mathbb{E}g_{rn}(z)| \xrightarrow{a.s.} 0\,\text{, and } |s_n(z)-\mathbb{E}s_n(z)| \xrightarrow{a.s.}  0.$
\end{enumerate}
\end{lemma}
\begin{proof}
Define $\mathcal{F}_j = \sigma(\{X_{\cdot k}:j+1 \leq k \leq n\})$ and for a measurable function $f$, we denote $\mathbb{E}_{j}f(X) := \mathbb{E}(f(X)|\mathcal{F}_j)$ for $0 \leq j \leq n-1$ and $\mathbb{E}_{n}f(X) := \mathbb{E}f(X)$. For $r \in [K]$, we observe that
\begin{align*}
    h_{rn}(z) - \mathbb{E}h_{rn}(z)
    &= \dfrac{1}{p}\operatorname{trace}(A_{n}^{(r)}Q(z)) - \mathbb{E}\bigg(\dfrac{1}{p}\operatorname{trace}(A_{n}^{(r)}Q(z))\bigg)\\
    &= \dfrac{1}{p}\sum_{j=1}^{n}(\mathbb{E}_{j-1} - \mathbb{E}_{j})\operatorname{trace}(A_{n}^{(r)}Q(z))\\
    &= \dfrac{1}{p}\sum_{j=1}^{n}(\mathbb{E}_{j-1} - \mathbb{E}_{j})\bigg(\underbrace{\operatorname{trace}(A_{n}^{(r)}Q(z)) - \operatorname{trace}(A_{n}^{(r)}Q_{-j}(z))}_{:= Y_j}\bigg)\\
    &= \dfrac{1}{p}\sum_{j=1}^{n}\underbrace{(\mathbb{E}_{j-1} - \mathbb{E}_{j}) Y_j}_{:= D_j}
    = \frac{1}{p}\sum_{j=1}^{n}D_j.
\end{align*}
By Lemma \ref{Lemma:OneRankTracePerturbation}, we have $|Y_j| \leq \tau/\Im(z)$ for any $j \in [n]$. So, $\absmod{D_j} \leq {2\tau}/{\Im(z)}$. By Lemma 2.12 of \cite{BaiSilv09}, there exists $K_4$ depending only on $z \in \mathbb{C}_+$ such that
\begin{align}\label{concInequality}
   \mathbb{E}\absmod{h_{rn}(z) - \mathbb{E}h_{rn}(z)}^4 = \mathbb{E}\absmod{\frac{1}{p}\sum_{j=1}^n D_j}^4 &\leq \frac{K_4}{p^4} \mathbb{E}\bigg(\sum_{j=1}^n |D_j|^2\bigg)^2\\  \notag
   &\leq \frac{K_4}{p^4}\bigg(n\frac{4\tau^2}{\Im^2(z)}\bigg)^2 
   = \frac{K_4}{c_n^2n^2}\frac{16\tau^4}{\Im^4(z)} \rightarrow 0.
\end{align}

By Borel Cantelli Lemma, we have $|h_{rn}(z) - \mathbb{E}h_{rn}(z)| \xrightarrow{a.s.} 0$. The second result follows similarly. For the last result,
\begin{align*}
    s_n(z) - \mathbb{E}s_n(z) = \frac{1}{p}\sum_{j=1}^p(\mathbb{E}_{j-1} - \mathbb{E}_{j})\bigg(\operatorname{trace}(Q(z)) - \operatorname{trace}(Q_{-j}(z))\bigg) = \frac{1}{p}\sum_{j=1}^nD_j.
\end{align*}
By Lemma \ref{Lemma:OneRankTracePerturbation}, $|D_j| \leq 2/\Im(z)$. Using the same arguments as above, the result follows. 
\end{proof}
\begin{lemma}\label{Im_of_hrn_awayfromzero}
    Let $z \in \mathbb{C}_+$ and $r \in [K]$. Under \textbf{A1-A2}, for sufficiently large $n$, we have
 $$\Im(h_{rn}(z)) \geq B_1(z) > 0 \text{ and } \Im(g_{rn}(z)) \geq B_2(z) > 0,$$ 
    where $B_i:\mathbb{C}_+ \rightarrow \mathbb{R}_+$ are deterministic functions.
\end{lemma}

\begin{proof}
Fix $r \in [K]$ and Let $z = u + \mathbbm{i}v$ with $v > 0$. Under \textbf{A1}, we have $||A_{n}^{(r)}||_{op} \leq \tau$. Since $H_n$ and $H$ are compactly supported on (a subset of) $[0, \tau]^K$ and $H_n \xrightarrow{d} H$ a.s., we get
\begin{align}
    \int_{[0,\tau]^K}\lambda_r dH_n(\boldsymbol{\lambda}) \xrightarrow{} \int_{[0,\tau]^K}\lambda_r dH(\boldsymbol{\lambda}); \quad r \in [K].
\end{align}
Moreover, these limits must be positive since none of the marginals of $H$ is degenerate at $0$. Therefore for large $n$, we have
\begin{align}\label{trace_limit}
\frac{1}{p}\operatorname{trace}(A_{n}^{(r)}) = \int_{[0,\tau]^K}\lambda_r dH_n(\boldsymbol{\lambda}) \xrightarrow{} \int_{[0,\tau]^K}\lambda_r dH(\boldsymbol{\lambda}) > 0.
\end{align}

Recalling Remark \ref{h_rn_is_a_ST}, we have
$$h_{rn}(z) = \frac{1}{p}\sum_{j=1}^p \frac{d_{jj}^{(r)}}{\lambda_j - z},$$
where $\{\lambda_j\}_{j=1}^p$ are the real eigenvalues of $S_n$ and $d_{jj}^{(r)}$ are the diagonal elements of $D_n^{(r)} = P_n^*A_n^{(r)}P_n$ with $P_n\Lambda_nP_n^*$ being a spectral decomposition of $S_n$. For any $\delta > 0$, we see that
\begin{align}
||S_n||_{op} =&\, ||\frac{1}{n}X_n||_{op}^2\\\notag
\leq& \,\sum_{r=1}^K (||\frac{1}{n}U_n^{(r)}Z_n^{(r)}V_n^{(r)}||_{op})^2 \\ \notag
\leq& \,K\sum_{r=1}^K(||U_{n}^{(r)}||_{op}||V_n^{(r)}||_{op}||Z_n^{(r)}||_{op})^2\\\notag
\leq& \, 2K\tau(1+\sqrt{c_n})^2 + \delta.
\end{align}

Let $B = 2K\tau(1+\sqrt{c})^2$. Then $\mathbb{P}(|\lambda_j| > B \hspace{2mm} i.o.) = 0$.

\footnote{$\operatorname{sgn}(x)$ is the sign function}
Define $B^* :=\left\{\begin{matrix}
    -B\operatorname{sgn}(u) & \text{ if } u \neq 0,\\
    B &  \text{            if } u = 0.
\end{matrix} \right.$

Then $(\lambda_j - u)^2 \leq (B^* - u)^2$. Therefore,
\begin{align*}
    \Im(h_{rn}(z)) &= \frac{1}{p}\sum_{j=1}^p\frac{d_{jj}^{(r)}v}{(\lambda_j - u)^2 + v^2}\\
    &\geq \frac{1}{p}\sum_{j=1}^p \frac{d_{jj}^{(r)}v}{(B^* - u)^2 + v^2}\\
    &= \frac{v}{(B^* - u)^2 + v^2} \bigg(\frac{1}{p}\sum_{j=1}^p d_{jj}^{(r)}\bigg)\\
    &= \frac{v}{(B^* - u)^2 + v^2} \bigg(\frac{1}{p}\operatorname{trace}(A_{n}^{(r)})\bigg) \text{, as } \operatorname{trace}(D_n^{(r)}) = \operatorname{trace}(A_{n}^{(r)})\\
    &\longrightarrow\, \frac{v}{(B^* - u)^2 + v^2}\bigg(\int_{[0,\tau]^K}\lambda_r dH(\boldsymbol{\lambda})\bigg) := M_r
    > 0 \text{ from } (\ref{trace_limit}).
\end{align*}
Define $B_1(z) := \underset{r \in [K]}{\min}M_r$. This lower bound works for $\Im(h_{rn}(z))$ for all $r \in [K]$. The proof for $\Im(g_{rn}(z))$ is similar.
\end{proof}

\begin{corollary}\label{DeterministicEquivalent2}
Under \textbf{A1-A2} and assuming $A_n^{(r)}$ are diagonal, for $z \in \mathbb{C}_+$, a deterministic equivalent for $\tilde{Q}(z)$ is given by 
\begin{align}
\Bar{\tilde{Q}}(z) = \bigg(-zI_n + c_n\sum_{r=1}^K B_{n}^{(r)} \bigg(\frac{1}{p}\operatorname{trace}\{A_{n}^{(r)}[I_p + \sum_{s=1}^K\mathbb{E}g_{sn}(z)A_{n}^{(s)}]^{-1}\}\bigg) \bigg)^{-1}.
\end{align}
\end{corollary}
\begin{remark}
    The proof is similar to that of Theorem \ref{DeterministicEquivalent}.
\end{remark}
\begin{definition}
Similar to (\ref{defining_hn_tilde}-\ref{defining_hn_tilde2}), we define the following quantities:
\begin{align}
    &\Tilde{g}_{rn}(z) := \frac{1}{n}\operatorname{trace}\{B_{n}^{(r)}\Bar{\tilde{Q}}(z)\} = \int \frac{\theta_r dG_n(\boldsymbol{\theta})}{-z(1 + c_n\boldsymbol{\theta}^T\textbf{O}(z, \mathbb{E}\textbf{g}_n, H_n))},\label{defining_gn_tilde}\\
    &\Bar{\Bar{\tilde{Q}}}(z) := \bigg(-zI_n + c_n\sum_{r=1}^K B_{n}^{(r)} \bigg(\frac{1}{p}\operatorname{trace}\{A_{n}^{(r)}[I_p + \sum_{s=1}^K\tilde{g}_{sn}(z)A_{n}^{(s)}]^{-1}\}\bigg)\bigg)^{-1},\label{defining_Q_tilde_BarBar}\\
    &\Tilde{\Tilde{g}}_{rn}(z) := \frac{1}{n}\operatorname{trace}\{B_{n}^{(r)}\Bar{\Bar{\tilde{Q}}}(z)\} =  \int \frac{\lambda_r dH_n(\boldsymbol{\lambda})}{-z(1 + c_n\boldsymbol{\theta}^T\textbf{O}(z, \tilde{\textbf{g}}_n, H_n))}.\label{defining_gn_tilde2}
\end{align}
\end{definition}

\begin{lemma}\label{F3_F4_results}
Let $\textbf{h}_n, \textbf{g}_n$ be as defined in (\ref{defining_hn}) and (\ref{defining_gn}) respectively and $H_n$, $G_n$ be as defined in (\ref{eqn:defining_JESD}). For each $z \in \mathbb{C}_+$, let $\{\textbf{p}_n(z),\textbf{q}_n(z) \in \mathbb{C}_+^K\}_{n=1}^\infty$ be sequences (deterministic or random) such that $||\textbf{h}_n-\textbf{p}_n||_1 \xrightarrow{} 0$ a.s. and $||\textbf{g}_n-\textbf{q}_n||_1 \xrightarrow{} 0$ a.s. Under \textbf{A1-A2}, we have the following results:
\begin{enumerate}
    \item[(1)] $||\textbf{O}(z, c_n\textbf{h}_n(z), G_n) - \textbf{O}(z, c_n\textbf{p}_n(z), G_n)||_1 \xrightarrow{a.s} 0$, and
    \item[(2)] $||\textbf{O}(z, \textbf{g}_n(z), H_n) - \textbf{O}(z, \textbf{q}_n(z), H_n)||_1 \xrightarrow{a.s.} 0$.
\end{enumerate}

Further if $\textbf{x}^\infty, \textbf{y}^\infty$ are any subsequential limits of $\{\textbf{h}_{n_k}, \textbf{g}_{n_k}\}_{k=1}^\infty$ respectively and $H,G$ are as defined in Theorem \ref{mainTheorem}, we have
\begin{enumerate}
    \item[(3)] $||\textbf{O}(z, c_{n_k}\textbf{h}_{n_k}(z), G_{n_k}) - \textbf{O}(z, c\textbf{x}^\infty(z), G)||_1 \xrightarrow{a.s.} 0$, and
    \item[(4)] $||\textbf{O}(z, \textbf{g}_{n_k}(z), H_{n_k}) - \textbf{O}(z, \textbf{y}^\infty(z), H)||_1 \xrightarrow{a.s.} 0$.
\end{enumerate}
\end{lemma}

\begin{remark}
    Here $\textbf{h}_n(\cdot),\textbf{p}_n(\cdot),\textbf{x}^\infty(\cdot),\textbf{y}^\infty(\cdot)$ are all (analytic) mappings from $\mathbb{C}_+^K$ to $\mathbb{C}_+^K$. So once we fix a $z \in \mathbb{C}_+$, we will almost exclusively refer to the complex vectors $\textbf{h}_n(z),\textbf{g}_n(z) \in \mathbb{C}_+^K$ by $\textbf{h}_n,\textbf{g}_n$ respectively unless stated otherwise. The same convention will be followed for $\tilde{\textbf{h}}_n(\cdot), \tilde{\tilde{\textbf{h}}}_n(\cdot),\tilde{\textbf{g}}_n(\cdot), \tilde{\tilde{\textbf{g}}}_n(\cdot)$ and their (subsequential) limits.
\end{remark}

\begin{proof}
Fix $z \in \mathbb{C}_+$. By Lemma \ref{Im_of_hrn_awayfromzero}, for any $r \in [K]$, we $\Im(h_{rn}(z)) \geq B_1(z) > 0$  for sufficiently large $n$. Similarly for any $r \in [K]$, we also have $\Im(p_{rn}(z)) \geq B_1(z) > 0$ since $||\textbf{h}_n-\textbf{p}_n||_1 \xrightarrow{a.s.} 0$. Therefore, we see that
\begin{align*}
    &||\textbf{O}(z, c_n\textbf{h}_n, G_n) - \textbf{O}(z, c_n\mathbf{p}_n, G_n)||_1\\ \notag
    \leq& \, \frac{1}{|z|}\sum_{r=1}^K\int\absmod{\frac{\theta_r}{1+c_n\boldsymbol{\theta}^T\mathbf{h}_n}-\frac{\theta_r}{1+c_n\boldsymbol{\theta}^T\mathbf{p}_n}}dG_n(\boldsymbol{\theta})\\ \notag
    \leq& \,\frac{1}{|z|}\sum_{r=1}^K \sum_{s=1}^K c_n|h_{sn} - p_{sn}| \int\absmod{\frac{\theta_r\theta_s}{(1+c_n\boldsymbol{\theta}^T\mathbf{h}_n)(1 + c_n\boldsymbol{\theta}^T\mathbf{p}_n)}}dG_n(\boldsymbol{\theta}) \\ \notag
    \leq& \,\frac{1}{|z|}\sum_{r=1}^K \sum_{s=1}^K c_n|h_{sn} - p_{sn}| \int\absmod{\frac{1}{c_n^2\Im(h_{rn})\Im(p_{sn})}}dG_n(\boldsymbol{\theta}) \\ \notag
    \leq& \,\frac{1}{|z|}\frac{Kc_n}{c_n^2B_1^2(z)}||\mathbf{h}_n - \mathbf{p}_n||_1 \leq \, \frac{2K}{c|z|B_1^2(z)}||\mathbf{h}_n - \mathbf{p}_n||_1.
    \end{align*}
The establishes the first result. The second result follows similarly from the second result of Lemma \ref{Im_of_hrn_awayfromzero}.

For the third and fourth results, recall that Theorem \ref{CompactConvergence} guarantees the existence of subsequential limits of $\{\textbf{h}_n\}$ and $\{\textbf{g}_n\}$. We observe that
\begin{align*}
    &||\textbf{O}(z, c_{n_k}\textbf{h}_{n_k}, G_{n_k}) - \textbf{O}(z, c\textbf{x}^\infty, G)||_1 \\ \notag
    \leq& \, ||\textbf{O}(z, c_{n_k}\textbf{h}_{n_k}, G_{n_k}) - \textbf{O}(z, c_{n_k}\textbf{h}_{n_k}, G)||_1 + ||\textbf{O}(z, c_{n_k}\textbf{h}_{n_k}, G) - \textbf{O}(z, c\textbf{x}^\infty, G)||_1\\ \notag
    \leq& \, \frac{1}{|z|}\sum_{r=1}^K \bigg(\int \absmod{\frac{\theta_r}{1 + c_{n_k} \boldsymbol{\theta}^T\textbf{h}_{n_k}}}d\{G_{n_k}(\boldsymbol{\theta}) - G(\boldsymbol{\theta})\} + \int\absmod{\frac{\theta_r}{1+c_{n_k}\boldsymbol{\theta}^T\textbf{h}_{n_k}}-\frac{\theta_r}{1+c\boldsymbol{\theta}^T\textbf{x}^\infty}}dG(\boldsymbol{\theta})\bigg).
    \end{align*}
 Since 
$||\textbf{h}_{n_k}-\textbf{x}^\infty||_1 \xrightarrow{} 0$, we also have $\Im(x_r^\infty(z))\geq B_1(z)>0$. This leads to the following bounds on the integrands associated with the $r^{th}$ term of the above expression:
\begin{align*}
    \absmod{\frac{\theta_r}{1 + c_{n_k} \boldsymbol{\theta}^T\textbf{h}_{n_k}}} \leq \frac{1}{c_n\Im(h_{r,n_k})} \leq \frac{2}{cB_1(z)} \text{ and }\absmod{\frac{\theta_r}{1 + c \boldsymbol{\theta}^T\textbf{x}^\infty}} \leq \frac{1}{c\Im(h_{r})} \leq \frac{1}{cB_1(z)}.
\end{align*}
Applying DCT and using the fact that $G_n \xrightarrow{d} G$, the third result follows. The fourth result follows similarly.
\end{proof}

\begin{lemma}\label{hn_gn_tilde_tilde2}
    Let $z \in \mathbb{C}_+$ and $r \in [K]$. Under \textbf{A1-A2}, we have $$|\tilde{h}_{rn}(z) - \tilde{\tilde{h}}_{rn}(z)| \xrightarrow{} 0 \text{ and }|\tilde{g}_{rn}(z) - \tilde{\tilde{g}}_{rn}(z)| \xrightarrow{} 0.$$
\end{lemma}

\begin{proof}
Let $r \in [K]$.  From Theorem \ref{DeterministicEquivalent} and Lemma \ref{ConcentrationLemma}, we have $||\tilde{\textbf{h}}_n - \mathbb{E}\textbf{h}_n|| \rightarrow 0$ as $n \rightarrow \infty$. Under \textbf{A1}, we have $\boldsymbol{\lambda} \in [0, \tau]^K$. By Lemma \ref{F3_F4_results} and the below string of inequalities the first result is immediate.
    \begin{align*}
        \absmod{\tilde{h}_{rn}(z) - \tilde{\tilde{h}}_{rn}(z)}
        &= \absmod{\int \frac{\lambda_rdH_n(\boldsymbol{\lambda})}{-z(1 + \boldsymbol{\lambda}^T\textbf{O}(z, c_n\mathbb{E}\textbf{h}_n, G_n))} - \int \frac{\lambda_rdH_n(\boldsymbol{\lambda})}{-z(1 + \boldsymbol{\lambda}^T\textbf{O}(z,c_n\tilde{\textbf{h}}_n, G_n))}}\\  \notag
        &\leq |z|  \int \frac{\absmod{\lambda_r \boldsymbol{\lambda}^T(\textbf{O}(z, c_n\mathbb{E}\textbf{h}_n, G_n) - \textbf{O}(z, c_n\tilde{\textbf{h}}_n, G_n))} dH_n(\boldsymbol{\lambda})}{|-z - z\boldsymbol{\lambda}^T\textbf{O}(z, c_n\mathbb{E}\textbf{h}_n, G_n)| \, |-z - z\boldsymbol{\lambda}^T\textbf{O}(z, c_n\tilde{\textbf{h}}_n, G_n)|}\\ \notag
        &\leq \frac{|z|}{v^2}\sum_{s=1}^K |O_s(z, c_n\mathbb{E}\textbf{h}_n, G_n) - O_s(z, c_n\tilde{\textbf{h}}_n, G_n)|\int \lambda_r\lambda_sdH_n(\boldsymbol{\lambda}) \\ \notag
        &\leq \frac{|z|\tau^2}{v^2}||\textbf{O}(z, c_n\mathbb{E}\textbf{h}_n, G_n) - \textbf{O}(z, c_n\tilde{\textbf{h}}_n, G_n))||_1 \xrightarrow{} 0.
    \end{align*}
    The second result follows similarly.
\end{proof}

\begin{lemma}\label{uniformConvergence}
\textbf{Uniform Convergence Results:} Recall the definition of $E_j(r,s)$ and $F_j(r,s)$ from (\ref{defining_Ejrs}) and (\ref{defining_Fjrs}) respectively for $r,s \in [K]$. Under \textbf{A1-A2}, we have the following uniform convergence results.
\begin{equation*}\label{uniformResults}
\left\{ \begin{aligned} 
    &\underset{1\leq j\leq n}{\max }|F_j(r,r) - b_{j}^{(r)}m_{rn}| \xrightarrow{a.s.} 0; \hspace{4mm}
    \underset{1\leq j\leq n}{\max }|F_j(r,s)|\xrightarrow{a.s.} 0, r \neq s;\\
    &\underset{1\leq j\leq n}{\max }|E_j(r,r) - c_nb_{j}^{(r)}h_{rn}| \xrightarrow{a.s.} 0; \hspace{1mm}
    \underset{1\leq j\leq n}{\max }|E_j(r,s)|\xrightarrow{a.s.} 0, r \neq s.
\end{aligned} \right.
\end{equation*}    
\end{lemma}

\begin{proof}
We begin with the following simplification of $E_j(r,s)$:
    \begin{align*}
        E_j(r,s) = \frac{1}{n}(X_{\cdot j}^{(r)})^*Q_{-j}X_{\cdot j}^{(s)} = \frac{1}{n}b_j^{(r)}Z_{\cdot j}U_n^{(r)}Q_{-j}U_n^{(r)}Z_{\cdot j}^{(s)}.
    \end{align*}
Recall that $U_n^{(r)} = (A_n^{(r)})^\frac{1}{2}$ and the fact that under \textbf{A1-A2}, we can use Lemma \ref{quadraticForm} and Lemma \ref{Lemma:OneRankTracePerturbation} to get 
\begin{align*}
    &\underset{1 \leq j \leq n}{\max}\absmod{E_j(r,r) - \frac{1}{n}b_j^{(r)}\operatorname{trace}(A_n^{(r)}Q_{-j})} \xrightarrow{a.s.} 0\\
    \implies& \underset{1 \leq j \leq n}{\max}\absmod{E_j(r,r) - c_nb_j^{(r)}\bigg(\frac{1}{p}\operatorname{trace}(A_n^{(r)}Q_{-j})\bigg)} \xrightarrow{a.s.} 0\\ 
    \implies& \underset{1 \leq j \leq n}{\max}\absmod{E_j(r,r) - c_nb_j^{(r)}\bigg(\frac{1}{p}\operatorname{trace}(A_n^{(r)}Q_{})\bigg)} \xrightarrow{a.s.} 0\\
    \implies& \underset{1 \leq j \leq n}{\max}\absmod{E_j(r,r) - c_nb_j^{(r)}h_{rn}} \xrightarrow{a.s.} 0.
\end{align*}
For $r\neq s$, $ \underset{1 \leq j \leq n}{\max}|E_j(r,s)| \xrightarrow{a.s.} 0$ follows from Lemma \ref{quadraticForm_xy} and Lemma \ref{Lemma:OneRankTracePerturbation}. Consider the below expansion of $F_j(r,s)$: 
\begin{align*}
    F_j(r,s) = \frac{1}{n}X_{\cdot j}^{(r)}\bar{Q}M_nQ_{-j}X_{\cdot j}^{(s)} =  \frac{1}{n}\sqrt{b_{j}^{(r)}b_{j}^{(s)}}(Z_{\cdot j}^{(r)})^*U_{n}^{(r)}\Bar{Q}M_nQ_{-j}U_{n}^{(s)}Z_{\cdot j}^{(s)}.
\end{align*}

By Lemma \ref{quadraticForm} and Lemma \ref{Lemma:OneRankTracePerturbation}, we have 
\begin{align*}
    &\underset{1 \leq j \leq n}{\max}\absmod{F_j(r,r) - \frac{1}{n}b_j^{(r)}\operatorname{trace}(A_n^{(r)}\bar{Q}M_nQ_{-j})} \xrightarrow{a.s.} 0\\
    \implies&\underset{1 \leq j \leq n}{\max}\absmod{F_j(r,r) - \frac{1}{n}b_j^{(r)}\operatorname{trace}(A_n^{(r)}\bar{Q}M_nQ_{})} \xrightarrow{a.s.} 0\\
    \implies& \underset{1 \leq j \leq n}{\max}\absmod{F_j(r,r) - b_j^{(r)}m_{rn}} \xrightarrow{a.s.} 0.
\end{align*}
The last result follows from Lemma \ref{quadraticForm_xy} and Lemma \ref{Lemma:OneRankTracePerturbation}.
\end{proof}

\subsection{Proof of Theorem \ref{DeterministicEquivalent}}\label{sec:ProofDeterministicEquivalent}
\begin{proof}
Fix $z \in \mathbb{C}_+$. Define $F(z) := \bigg(\Bar{Q}(z)\bigg)^{-1}$. We will use $Q, \bar{Q}, Q_{-j}$ for simplicity.
From the resolvent identity (\ref{R0}), we have 
\begin{align}\label{resolventIdentity}
Q - \Bar{Q} = Q \bigg(F + zI_p - \dfrac{1}{n}\sum_{j=1}^{n}X_{\cdot j}X_{\cdot j}^{*}\bigg)\Bar{Q}.
\end{align}

Using the above, we get
\begin{align}\label{Q_minus_QBar}
& \frac{1}{p}\operatorname{trace}\{(Q - \Bar{Q})M_n\}\\
=&\,\frac{1}{p}\operatorname{trace} \{Q(F + zI_p)\Bar{Q}M_n \} - \frac{1}{p} \operatorname{trace} \{Q \bigg(\dfrac{1}{n}\sum_{j=1}^{n}X_{\cdot j}X_{\cdot j}^{*})\bigg) \Bar{Q} M_n \} \notag \\
=&\,\frac{1}{p}\operatorname{trace} \{(F + zI_p)\Bar{Q}M_nQ \} - \frac{1}{p} \operatorname{trace} \{\bigg(\dfrac{1}{n}\sum_{j=1}^{n}X_{\cdot j}X_{\cdot j}^{*})\bigg) \Bar{Q} M_n Q\}\notag \\
=&\, \underbrace{\frac{1}{p}\operatorname{trace} \{(F + zI_p)\Bar{Q}M_nQ \}}_{{Term}_1} - \underbrace{\frac{1}{p} \sum_{j=1}^{n}\dfrac{1}{n} X_{\cdot j}^{*}\Bar{Q}M_nQX_{\cdot j}}_{{Term}_2}. \notag
\end{align}

Since we have assumed that $B_n^{(r)}$ are diagonal, let $b_j^{(r)}$ represent the $j^{th}$ diagonal element of $B_{n}^{(r)}$. In particular, this means that the $j^{th}$ column of $X_n^{(r)}$ can be expressed as $X_{\cdot j}^{(r)} = (U_n^{(r)}Z_n^{(r)}V_n^{(r)})_{\cdot j} = \sqrt{b_j^{(r)}}U_n^{(r)}Z_n^{(r)}$. To establish ${Term}_1 - {Term}_2 \xrightarrow{a.s.} 0$, we define the following quantities.

For $j \in [n], r,s \in [K]$, define
\begin{align}
    &E_j(r,s) := \frac{1}{n}(X_{\cdot j}^{(r)})^*Q_{-j}X_{\cdot j}^{(s)} = \frac{1}{n}\sqrt{b_{j}^{(r)}b_{j}^{(s)}}(Z_{\cdot j}^{(r)})^*U_{n}^{(r)}Q_{-j}U_{n}^{(s)}Z_{\cdot j}^{(s)}, \label{defining_Ejrs}\\
 &F_j(r,s) := \frac{1}{n}(X_{\cdot j}^{(r)})*\Bar{Q}M_nQ_{-j}X_{j}^{(s)} = \frac{1}{n}\sqrt{b_{j}^{(r)}b_{j}^{(s)}}(Z_{\cdot j}^{(r)})^*U_{n}^{(r)}\Bar{Q}M_nQ_{-j}U_{n}^{(s)}Z_{\cdot j}^{(s)}, \label{defining_Fjrs}\\
&m_{rn}(z) := \frac{1}{n}\operatorname{trace}\{A_{n}^{(r)}\Bar{Q}M_nQ\} \text{ for } r \in [K]. \label{defining_mrn}
\end{align}

The following relationship between $Q$ and $Q_{-j}$ for $j \in [n]$ is a direct consequence of Woodbury's formula:
\begin{align}\label{oneRankPerturbation}
    QX_{\cdot j} = \frac{Q_{-j}X_{\cdot j}}{1 + \frac{1}{n}X_{\cdot j}^*Q_{-j}X_{\cdot j}}.
\end{align}

Using (\ref{oneRankPerturbation}) and noting that $X_{\cdot j} = \sum_{r=1}^KX_{\cdot j}^{(r)}$, $Term_2$ of (\ref{Q_minus_QBar}) can be simplified as follows:
\begin{align}\label{T2_simplification}
&{Term}_2
        =\frac{1}{p}\sum_{j=1}^{n}\frac{1}{n} X_{\cdot j}^{*}\Bar{Q}M_nQX_{\cdot j} = \frac{1}{p}\sum_{j=1}^{n} \frac{\frac{1}{n}X_{\cdot j}^{*}\Bar{Q}M_nQ_{-j}X_{\cdot j}}{1 + \frac{1}{n}X_{\cdot j}^*Q_{-j}X_{\cdot j}} = \frac{1}{p}\sum_{j=1}^n\frac{\sum _{r,s=1}^KF_j(r,s)}{1 + \sum_{r,s=1}^KE_j(r,s)}.
\end{align}

To proceed further, we need the limiting behaviour of $F_j(r,s), E_j(r,s)$. From Lemma \ref{uniformConvergence}, we have the results given below.
\begin{equation*}\label{uniformResults1}
\left\{ \begin{aligned} 
    &\underset{1\leq j\leq n}{\max }|F_j(r,r) - b_{j}^{(r)}m_{rn}| \xrightarrow{a.s.} 0; \hspace{4mm}
    \underset{1\leq j\leq n}{\max }|F_j(r,s)|\xrightarrow{a.s.} 0, r \neq s;\\
    &\underset{1\leq j\leq n}{\max }|E_j(r,r) - c_nb_{j}^{(r)}h_{rn}| \xrightarrow{a.s.} 0; \hspace{1mm} 
    \underset{1\leq j\leq n}{\max }|E_j(r,s)|\xrightarrow{a.s.} 0, r \neq s.
\end{aligned} \right.
\end{equation*}

Using them in (\ref{T2_simplification}), we get
\begin{align}\label{Term2_simplification2}
\absmod{Term_2 - \frac{1}{p}\sum_{j=1}^n\sum_{r=1}^K\frac{b_{j}^{(r)}m_{rn}}{1 + c_n\sum_{s=1}^Kb_{j}^{(s)}h_{sn}}} \xrightarrow{a.s.} 0.
\end{align}
Now note that 
\begin{align*}
&\frac{1}{p}\sum_{j=1}^n\sum_{r=1}^K\frac{b_{j}^{(r)}m_{rn}}{1 + c_n\sum_{s=1}^Kb_{j}^{(s)}h_{sn}}\\
=& \frac{1}{p} \sum_{j=1}^n \frac{\sum_{r=1}^K \frac{1}{n}\operatorname{trace}\{b_{j}^{(r)}A_r\Bar{Q}M_nQ_{}\}}{1 + c_n\sum_{s=1}^Kb_{j}^{(s)}h_{sn}}\\
=& \frac{1}{p} \operatorname{trace} \bigg\{\sum_{r=1}^KA_n^{(r)}\bigg(\sum_{j=1}^n\frac{1}{n}\frac{b_{j}^{(r)}}{1 + c_n\sum_{s=1}^Kb_{j}^{(s)}h_{sn}}\bigg)\Bar{Q}M_nQ_{}\bigg\}\\
=& \frac{1}{p}\operatorname{trace} \bigg(\sum_{r=1}^KA_n^{(r)} \times \frac{1}{n}\operatorname{trace}\{B_n^{(r)}[I_n + c_n\sum_{s=1}^Kh_{sn}B_n^{(s)}]^{-1}\}\bigg)\Bar{Q}M_nQ.
\end{align*}

Finally using Lemma \ref{ConcentrationLemma}, we get
{\small \begin{align*}
&\absmod{Term_2 - \frac{1}{p}\operatorname{trace} \bigg(\sum_{r=1}^KA_n^{(r)} \times \frac{1}{n}\operatorname{trace}\{B_n^{(r)}[I_n + c_n\sum_{s=1}^Kh_{sn}B_n^{(s)}]^{-1}\}\bigg)\Bar{Q}M_nQ}\xrightarrow{a.s.} 0\\
\implies & \absmod{Term_2 - \bigg(zI_p -zI_p + \frac{1}{p}\operatorname{trace} \bigg(\sum_{r=1}^KA_n^{(r)} \times \frac{1}{n}\operatorname{trace}\{B_n^{(r)}[I_n + c_n\sum_{s=1}^K\mathbb{E}h_{sn}B_n^{(s)}]^{-1}\}\bigg)\Bar{Q}M_nQ\bigg)}\xrightarrow{a.s.} 0\\
\implies& \absmod{Term_2 - \frac{1}{p}\operatorname{trace}\{(zI_p + F(z))\Bar{Q}M_nQ\}}\xrightarrow{a.s.} 0\\
\implies & |Term_2 - Term_1| \xrightarrow{a.s.} 0.
\end{align*}} 
This concludes the proof.
\end{proof}

\begin{lemma}\label{O_of_z_is_holomorphic}
    Let $\textbf{x}(\cdot) = \{x_r(\cdot)\}_{r=1}^K$ and $\textbf{y}(\cdot) = \{y_r(\cdot)\}_{r=1}^K$  be Stieltjes Transforms of some measures on $\mathbb{R}$. Using definition (\ref{defining_Operator_O}) and $c,H,G$ as defined in Theorem \ref{mainTheorem}, the below functions are holomorphic for any $r \in [K]$:
    $$\eta_r: \mathbb{C}_+ \rightarrow \mathbb{C}_+; \quad \eta_r(z) = O_r(z, c\textbf{x}(z), G),$$
    $$\nu_r: \mathbb{C}_+ \rightarrow \mathbb{C}_+; \quad \nu_r(z) = O_r(z, \textbf{y}(z), H).$$
\end{lemma}
\begin{proof}
    Fix $r \in [K]$. Let $\gamma$ be an arbitrary closed curve in $\mathbb{C}_+$. There exists a compact set $M \subset \mathbb{C}_+$ that contains $\gamma$. Since $x_r(\cdot)$ is holomorphic, $x_r(M) \subset \mathbb{C}_+$ is compact. Define $B_M := \underset{z \in M}{\inf} |z|\Im(x_r(z))$. Clearly $B_M > 0$ and this allows us to establish the following bound:
    \begin{align*}
        \absmod{\frac{\theta_r}{-z(1 + c\boldsymbol{\theta}^T\textbf{x}(z))}} \leq \frac{1}{c|z|\Im(x_r(z))} \leq \frac{1}{cB_M} < \infty.
    \end{align*}
    Now applying Fubini, Cauchy and Morera, we have the following result:
    \begin{align*}
        \int_\gamma O_r(z, c\textbf{x}(z), G) dz =& \int_\gamma \int \frac{\theta_r dG(\boldsymbol{\theta})}{-z(1 + c\boldsymbol{\theta}^T\textbf{x}(z))}dz\\
        =&  \int \bigg(\int_\gamma \frac{\theta_r}{-z(1 + c\boldsymbol{\theta}^T\textbf{x}(z))}dz\bigg) dG(\boldsymbol{\theta}) = 0.
    \end{align*}
    The proof for the other part is similar.
\end{proof}

\subsection{Proof of Theorem Existence under \textbf{A1-A2}}\label{sec:ProofExistenceA12}
\begin{proof}
By Theorem \ref{CompactConvergence}, we can choose a common subsequence $\{n_k\}_{k=1}^\infty$ such that $\{\textbf{h}_{n_k}, \textbf{g}_{n_k}\}_{k=1}^\infty$ converge uniformly within every compact subset of $\mathbb{C}_+$. For convenience, we denote these subsequences as $\{\textbf{x}_k,\textbf{y}_k\}_{k=1}^\infty$ and the corresponding limits as $\textbf{x}^\infty, \textbf{y}^\infty$ respectively. For convenience, we introduce the following notations along the subsequence $\{n_k\}_{k=1}^\infty$:
\begin{enumerate}
    \item $\tilde{\textbf{x}}_k = \tilde{\textbf{h}}_{n_k}; \quad \tilde{\tilde{\textbf{x}}}_k = \tilde{\tilde{\textbf{h}}}_{n_k}$,
    \item $\tilde{\textbf{y}}_k = \tilde{\textbf{g}}_{n_k}; \quad \tilde{\tilde{\textbf{y}}}_k = \tilde{\tilde{\textbf{g}}}_{n_k}$,
    \item $d_k = c_{n_k}$, and
    \item $P_k = H_{n_k}; \quad Q_k = G_{n_k}$.
\end{enumerate}

Fix $z = u + \mathbbm{i}v \in \mathbb{C}_+$ and let $r \in [K]$. From Lemma \ref{hn_gn_tilde_tilde2}, we have 
\begin{align*}\label{gm_tilde_tilde2}
&\Tilde{x}_{rk}(z) - \Tilde{\Tilde{x}}_{rk}(z) \rightarrow 0\\
\implies &\displaystyle \Tilde{x}_{rk}(z)- \int\dfrac{\lambda_r 
dP_k(\boldsymbol{\lambda})}{-z(1 + \boldsymbol{\lambda}^T\textbf{O}(z, d_k\tilde{\textbf{x}}_k(z), Q_k))} \longrightarrow 0 \notag\\
\implies &\Tilde{x}_{rk}(z) -\int\dfrac{\lambda_r d\{P_k(\boldsymbol{\lambda})-H(\boldsymbol{\lambda})\}}{-z(1 + \boldsymbol{\lambda}^T\textbf{O}(z, d_k\tilde{\textbf{x}}_k(z), Q_k))} -
\int\dfrac{\lambda_r dH(\boldsymbol{\lambda})}{-z(1 + \boldsymbol{\lambda}^T\textbf{O}(z, d_k\tilde{\textbf{x}}_k(z), Q_k))} \longrightarrow 0 \notag.
\end{align*}
Note that by \textbf{A1} and Lemma \ref{im_of_z_times_Or_positive}, the common integrand in the second and third terms are bounded.
\begin{align}
    \absmod{\frac{\lambda_r}{-z(1 + \boldsymbol{\lambda}^T\textbf{O}(z, d_k\tilde{\textbf{x}}_k(z), Q_k))}} \leq \frac{\tau}{v}.
\end{align}
Since $P_k \xrightarrow{d} H$ by \textbf{T4}, the second term vanishes in the limit. Moreover $Q_k \xrightarrow{d} G$ and $\tilde{\textbf{x}}_k \xrightarrow{} \textbf{x}^\infty$. Therefore, by Lemma \ref{F3_F4_results}, we have $\textbf{O}(z, d_k\tilde{\textbf{x}}_k(z), Q_k) \xrightarrow{} \textbf{O}(z, d_k\textbf{x}^\infty(z), G)$. Finally, applying DCT, we get
\begin{align}
    x_r^\infty(z) = \int \frac{\lambda_r dH(\boldsymbol{\lambda})}{-z(1 + \boldsymbol{\lambda}^T\textbf{O}(z, c\textbf{x}^\infty(z), G))}.
\end{align}
This implies that any subsequential limit of $\textbf{h}_{n}(z)$ satisfies equation (\ref{h_main_equation}). Similarly, starting from the second result of Lemma \ref{hn_gn_tilde_tilde2}, we can establish that any subsequential limit of $\textbf{g}_{n}(z)$ satisfies equation (\ref{g_main_equation}).
\begin{align}
    y_r^\infty(z) = \int \frac{\theta_r dG(\boldsymbol{\lambda})}{-z(1 + c\boldsymbol{\theta}^T\textbf{O}(z, \textbf{y}^\infty(z), H))}.
\end{align}
Now we will show that $\textbf{x}^\infty$ and $\textbf{y}^\infty$ satisfy (\ref{hg_recursive}). Using Lemma \ref{F3_F4_results}, we observe that
\begin{align}
    &||\textbf{x}^\infty(z) - \textbf{O}(z, \textbf{y}^\infty(z), H)||_1\\ \notag
    =& \underset{k \rightarrow \infty}{\lim} ||\textbf{x}_k(z) - \textbf{O}(z, \textbf{y}_k(z), P_k)||_1\\ \notag
    =& \underset{k \rightarrow \infty}{\lim} ||\tilde{\textbf{x}}_k(z) - \textbf{O}(z, \textbf{y}_k(z), P_k)||_1\\ \notag
    =& \sum_{r=1}^K \underset{k \rightarrow \infty}{\lim} |\tilde{x}_{rk}(z) - O_r(z, \textbf{y}_k(z), P_k)|.
\end{align}
Making use of Lemma \ref{ConcentrationLemma} and Lemma \ref{F3_F4_results}, we expand the $r^{th}$ term in the above expansion as follows.
\begin{align}\label{x_rk_expansion}
    &|\tilde{x}_{rk}(z) - O_r(z, \textbf{y}_k(z), P_k)|\\ \notag
    =& \,\absmod{\int \frac{\lambda_rdP_k(\boldsymbol{\lambda})}{-z(1 + \boldsymbol{\lambda}^T \textbf{O}(z, d_k\mathbb{E}\textbf{x}_k(z), Q_k))} - \int \frac{\lambda_r dP_k(\boldsymbol{\lambda})}{-z(1 + d_k\boldsymbol{\lambda}^T\textbf{y}_k(z))}}\\ \notag
    =& \,|z|\absmod{\int \frac{\boldsymbol{\lambda}^T(\textbf{y}_k(z) - \textbf{O}(z, d_k\mathbb{E}\textbf{x}_k(z), Q_k))\lambda_r dP_k(\boldsymbol{\lambda})}{(-z - d_kz\boldsymbol{\lambda}^T \textbf{O}(z, \mathbb{E}\textbf{x}_k(z), Q_k))(-z - z\boldsymbol{\lambda}^T\textbf{y}_k(z))}}\\ \notag
    \leq & \,|z|\sum_{s=1}^K   \absmod{y_{sk}(z) - O_{s}(z, d_k\mathbb{E}\textbf{x}_k(z), Q_k)} \int \absmod{\frac{\lambda_r \lambda_s}{(z + z\boldsymbol{\lambda}^T \textbf{O}(z, d_k\mathbb{E}\textbf{x}_k(z), Q_k))(z + z\boldsymbol{\lambda}^T\textbf{y}_k(z))}}dP_k(\boldsymbol{\lambda}) \\ \notag
    \leq& \,\frac{|z|\tau^2}{v^2} \sum_{s=1}^K |y_{sk}(z) - O_{s}(z, d_k\mathbb{E}\textbf{x}_k(z), Q_k)|\\ \notag
\leq& \,\frac{|z|\tau^2}{v^2} \sum_{s=1}^K |y_{sk}(z) - O_{s} (z,d_k\textbf{x}_k(z), Q_k)| + \epsilon\, \text{, for sufficiently large } k \\ \notag
=& \,\frac{|z|\tau^2}{v^2} ||\textbf{y}_k - \textbf{O}(z, d_k\textbf{x}_k(z), Q_k)||_1 + \epsilon.
\end{align}
Similarly, we expand the $s^{th}$ term in the above expansion and observe that
\begin{align}\label{y_sk_expansion}
    &|y_{sk}(z) - O_{s}(z, d_k\textbf{x}_k(z), Q_k)|\\ \notag
    =&\,\absmod{\int \frac{\theta_sdQ_k(\boldsymbol{\theta})}{-z(1 + d_k \boldsymbol{\theta}^T \textbf{O}(z, d_k\mathbb{E}\textbf{y}_k(z), P_k))} -\int \frac{\theta_sdQ_k(\boldsymbol{\theta})}{-z(1+d_k\boldsymbol{\theta}^T\textbf{x}_k(z))}}\\ \notag
    =&\, |z| \absmod{\int \frac{\boldsymbol{\theta}^T(\textbf{x}_k(z) - \textbf{O}(z, \mathbb{E}\textbf{y}_k(z), P_k))\theta_s dQ_k(\boldsymbol{\theta})}{(-z - d_k z \boldsymbol{\theta}^T \textbf{O}(z, \mathbb{E}\textbf{y}_k(z), P_k))(-z - d_k z\boldsymbol{\theta}^T\textbf{x}_k(z))}}\\ \notag
    \leq& \,|z|\sum_{t=1}^K |x_{tk}(z) - O_{t}(z, \mathbb{E}\textbf{y}_k(z), P_k)| \int \absmod{\frac{\theta_s\theta_t}{(z + d_k z \boldsymbol{\theta}^T \textbf{O}(z, \mathbb{E}\textbf{y}_k(z), P_k))(z + d_k z\boldsymbol{\theta}^T\textbf{x}_k(z))}}dQ_k(\boldsymbol{\theta})\\ \notag
    \leq& \,\frac{|z|\tau^2}{v^2}\sum_{t=1}^K \bigg(|x_{tk}(z) - O_{t}(z,\textbf{y}_k(z), P_k)| + \epsilon\bigg)\, \text{, for large } k\\ \notag
    =& \,\frac{|z|\tau^2}{v^2}||\textbf{x}_k - \textbf{O}(z, \textbf{y}_k(z), P_k)||_1 + \frac{K\epsilon |z|\tau^2}{v^2}.
\end{align}
Therefore, combining (\ref{x_rk_expansion}) and (\ref{y_sk_expansion}), we have
\begin{align*}
    &||\textbf{y}_k(z) - \textbf{O}(z, d_k\textbf{x}_k(z), Q_k)||_1 \leq \frac{K|z|\tau^2}{v^2}||\textbf{x}_k - \textbf{O}(z, \textbf{y}_k(z), P_k)||_1 + \frac{K^2\epsilon |z|\tau^2}{v^2}\\
    \implies & |\tilde{x}_{rk}(z) - O_r(z, \textbf{y}_k(z), P_k)| \leq \frac{K|z|^2\tau^4}{v^4}||\textbf{x}_k - \textbf{O}(z, \textbf{y}_k(z), P_k)||_1 + \frac{K^2\epsilon |z|^2\tau^4}{v^4} + \epsilon.
\end{align*}
Choose $z \in \mathbb{C}_+$ such that $|u| \leq v$ which in particular implies that $|z|^2 \leq 2v^2$. For sufficiently large $k$, Theorem \ref{DeterministicEquivalent} implies that
\begin{align}
  |x_{rk}(z) - O_{r}(z, \textbf{y}_k(z), P_k)|
  \leq& \,|\tilde{x}_{rk}(z) - O_{r}(z, \textbf{y}_k(z), P_k)| + \epsilon\\ \notag
  \implies ||\textbf{x}_k(z) - \textbf{O}(z, \textbf{y}_k(z), P_k)||_1 \leq& \frac{2K^2\tau^4}{v^2}||\textbf{x}_k(z) - \textbf{O}(z, \textbf{y}_k(z), P_k)||_1 + \frac{2\tau^4K^3}{v^2}\epsilon + K\epsilon. \notag
\end{align}
Using Lemma \ref{F3_F4_results}, we take limits on both sides to get the following.
\begin{align}
  ||\textbf{x}^\infty(z) - \textbf{O}(z, \textbf{y}^\infty(z), H)||_1 \leq& \frac{2K^2\tau^4}{v^2}||\textbf{x}^\infty(z) - \textbf{O}(z, \textbf{y}^\infty(z), H)||_1 + \frac{2\tau^4K^3}{v^2}\epsilon + K\epsilon.
\end{align}
Choose $\epsilon>0$ arbitrarily small and then for large enough $v>0$, we will end up with a contradiction unless $||\textbf{x}^\infty(z) - \textbf{O}(z, \textbf{y}^\infty(z), H)||_1 = 0$. Similarly, we can derive that $||\textbf{y}^\infty(z) - O(z, c\textbf{x}^\infty(z), G)||_1 = 0$. For any $r \in [K]$, $x_r^\infty(\cdot)$ is holomorphic as it is a Stieltjes Transform and $O_{r}(\cdot)$ as a function of $z \in \mathbb{C}_+$ is holomorphic by Lemma \ref{O_of_z_is_holomorphic}. Since these two agree on an open subset of $\mathbb{C}_+$, they must be equal on the whole of $\mathbb{C}_+$.

So $\textbf{x}^\infty$ and $\textbf{y}^\infty$ satisfy (\ref{h_main_equation}), (\ref{g_main_equation}) and (\ref{hg_recursive}). By Theorem \ref{Uniqueness2}, all these subsequential limits must coincide which we will refer to as $\textbf{h}^\infty$ and $\textbf{g}^\infty$ going forward. Moreover, $h_r^\infty$ and $g_r^\infty$ themselves are Stieltjes Transforms of real measures due to Theorem \ref{CompactConvergence}. This establishes (1) and (2) of Theorem \ref{Existence_A12}.

We now show that $s_n(z) \xrightarrow{a.s.} s_{F}(z)$ where $s_{F}(z)$ is defined in (\ref{s_main_eqn}). From Theorem \ref{DeterministicEquivalent}, we have 
$$|s_n(z) - \frac{1}{p}\operatorname{trace}(\bar{Q}(z))| \xrightarrow{a.s.} 0.$$ 
Therefore, all that remains is to show that 
$$\displaystyle \absmod{\frac{1}{p}\operatorname{trace}(\bar{Q}(z)) - \int \frac{dH(\boldsymbol{\lambda})}{-z(1 + \boldsymbol{\lambda}^T\textbf{O}(z,c\textbf{h}^\infty(z), G))}} \rightarrow 0.$$
Due to \textbf{T3} of Theorem \ref{mainTheorem}, we have
\begin{align}\label{average_trace_Qbarz}
    \frac{1}{p}\operatorname{trace}(\bar{Q}(z)) &= \int \frac{dH_n(\boldsymbol{\lambda})}{-z(1 + \boldsymbol{\lambda}^T\textbf{O}(z, c_n\mathbb{E}{\textbf{h}}_n(z), G_n))} \\ \notag
    &= \int \frac{d\{H_n(\boldsymbol{\lambda})-H(\boldsymbol{\lambda})\}}{-z(1 + \boldsymbol{\lambda}^T\textbf{O}(z, c_n\mathbb{E}{\textbf{h}}_n(z), G_n))} + \int \frac{dH(\boldsymbol{\lambda})}{-z(1 + \boldsymbol{\lambda}^T\textbf{O}(z, c_n\mathbb{E}{\textbf{h}}_n(z), G_n))}.
\end{align}
By Lemma \ref{im_of_z_times_Or_positive}, the common integrand in both the terms is bounded by $1/v$. The second term goes to $0$, since $H_n \xrightarrow{d} H$. By Lemma \ref{ConcentrationLemma}, we now have $||\mathbb{E}\textbf{h}_n(z) - \textbf{h}^\infty(z)||_1 \xrightarrow{a.s.} 0$. Therefore, by Lemma \ref{F3_F4_results}, we conclude
$||\textbf{O}(z, c_n\mathbb{E}{\textbf{h}}_n(z), G_n)) - \textbf{O}(z, c_n\textbf{h}_n^\infty(z), G))||_1 \xrightarrow{} 0$. Finally applying DCT in the second term of (\ref{average_trace_Qbarz}) gives us
\begin{align}
    \underset{n \rightarrow \infty}{\lim} \int \frac{dH(\boldsymbol{\lambda})}{-z(1 + \boldsymbol{\lambda}^T\textbf{O}(z, c_n\mathbb{E}{\textbf{h}}_n(z), G))}  = \int \frac{dH(\boldsymbol{\lambda})}{-z(1 + \boldsymbol{\lambda}^T\textbf{O}(z, c{\textbf{h}}^\infty(z), G))} = s_F(z).
\end{align}
Therefore, $s_n(z) \xrightarrow{a.s.} s_{F}(z)$. This establishes the equivalence between $(\ref{s_main_eqn})$ and (\ref{s_alt_char_1}). 

Now, we will show that $s_F(\cdot)$ satisfies the conditions of Theorem \ref{GeroHill}. From Lemma \ref{Lemma_Im_h_g_bound}, we have
\begin{align}
|\mathbbm{i}y(\textbf{h}^\infty(\mathbbm{i}y))^T\textbf{g}^\infty(\mathbbm{i}y)| \leq \sum_{r=1}^K y|h_r^\infty(\mathbbm{i}y)|\,|g_r^\infty(\mathbbm{i}y)| \leq Ky\frac{C_0}{y}\frac{C_0}{y} \rightarrow 0 \text{ as } y \rightarrow \infty.
\end{align}
Thus using (\ref{s_alt_char_1}), we have
\begin{align}
zs_F(z) = -z(z^{-1} + \textbf{h}^T(z)\textbf{g}(z))\quad \implies\underset{y \rightarrow \infty}{\lim}\mathbbm{i}ys_F(\mathbbm{i}y) = -1.    
\end{align}

Since $s_{F}(.)$ satisfies the necessary and sufficient condition from Proposition \ref{GeroHill}, it is the Stieltjes transform of some probability distribution. By Proposition \ref{GeroHill}, this underlying measure $F$ is the L.S.D. of $F^{S_n}$. This completes the proof of Theorem \ref{mainTheorem} under Assumptions \ref{A12}.
\end{proof}

\section{Results related to implementation of Lindeberg's Principle}
\begin{theorem}\label{thm:TruncatedEdgeBound}
    Let $c \in (0, \infty)$ and $\lambda_+ = (1+\sqrt{c})^2$. Suppose that the entries of the matrix $\textbf{X}_n = (x_{jkn},j\leq p,k \leq n)$ are independent (not necessarily identically distributed) and satisfy 
    \begin{enumerate}
        \item[1] $\mathbb{E}x_{jkn} = 0$,
        \item[2] $|x_{jkn}| \leq n^b;\, b \in (0, \frac{1}{2})$,
        \item[3] $\underset{j,k,n}{\sup}|\mathbb{E}|x_{jkn}|^2 - 1| \longrightarrow 0$, and
        \item[4] $\mathbb{E}|x_{jkn}|^l \leq d(\delta_n\sqrt{n})^{l-2}$ for all $l \geq 2$ for some $d > 0$,
    \end{enumerate}
    where $\delta_n \rightarrow 0$ and $b >0$. Let $\textbf{S}_n = \frac{1}{n}\textbf{X}_n\textbf{X}_n^*$. Then for any $\epsilon>0$,
    $$\mathbb{P}(\lambda_{max}(\textbf{S}_n) > \lambda_+ + \epsilon \text{, i.o.}) = 0.$$
\end{theorem}

\begin{proof}
Fix arbitrary $\epsilon > 0$. We have $\lambda_{max}(\textbf{S}_n) \leq \operatorname{tr}(\textbf{S}_n)$. In particular, for $t = \lambda_+ + \epsilon$ and any $k \in \mathbb{N}$, we have 
     \begin{align}\label{eqn:Markov1}
        \mathbb{P}(\lambda_{max}(\textbf{S}_n) > t) \leq \mathbb{P}(\operatorname{tr}(\textbf{S}_n^k) > t^k) \leq \frac{\mathbb{E}\operatorname{tr}(\textbf{S}_n^k)}{t^k}.
    \end{align}

Expanding the tracial term above, we get
\begin{align}\label{eqn:trace_expansion}
    \operatorname{tr}(S_n^{\,k})
= \frac{1}{n^{k}}
\sum_{i_1,\dots,i_k}
\sum_{j_1,\dots,j_k}
x_{i_1 j_1}\,
\overline{x_{i_2 j_1}}\,
x_{i_2 j_2}\,
\overline{x_{i_3 j_2}}\,
\cdots\,
x_{i_k j_k}\,
\overline{x_{i_1 j_k}}.
\end{align}
Here, $i_l \in [p]$ and $j_l \in [n]$ for each $l \in [k]$. Note that if \textbf{i} and \textbf{j} are such that $x_{i_l,j_l}$ or $\overline{x_{i_l,j_l}}$ appears only once for some $l \in [k]$, then that term does not contribute anything to $\mathbb{E}(S_n^k)$. So the contributing terms in the RHS of (\ref{eqn:trace_expansion}) are those whose unique (upto conjugacy, i.e. $x_{ij}$ and $\overline{x}_{ij}$ will be counted as the same element) components appear at least twice. 

Let $s$ be the number of unique terms for a particular combination of \textbf{i} and \textbf{j} indices. Note that $1 \leq s \leq k + 1$ where $s = 1$ denotes the case where we have a single node and $s = k +1$ denotes where the bi-partite graph becomes a tree without any cycles. For all other values of $s$, the graph has cycles.

Let the number of unique row indices be $u$ and the number of unique column indices be $v$. There are $O((cn)^u)$ ways to choose the unique row indices and $n^v$ ways to choose the column indices. By \textbf{a basic graph theoretic result}, (i.e., a connected graph $(V,E)$ satisfies $|V| \leq |E|+1$) we have $u + v \leq s + 1$. So, the total number of \textbf{i,j} tuples that involve $s$ distinct $x_{ij}$ elements are given by $c^un^{u+v} \leq c^kn^{s+1}$. Let $m_l$ be the multiplicity of $x_{i_l,j_l}$. Then, $\sum_{l=1}^s m_l = 2k$. By Condition 4, we get the following bound on the contribution of individual terms of (\ref{eqn:trace_expansion}) \begin{align*}
   \absmod{\mathbb{E} \prod_{l=1}^s x_{i_l,j_l}^{m_l}} \leq d^s (n^b)^{2(k-s)}.
\end{align*}

Without loss of generality, we can assume $c\leq 1$. This is because we only need an almost sure upper bound for $||\frac{1}{\sqrt{n}}X||_{op}$. If $c>1$, we note that $||\frac{1}{\sqrt{n}}X||_{op} = \sqrt{\frac{p}{n}}||\frac{1}{\sqrt{p}}X^*||_{op}$. Thus, we need to multiply the bound by $2\sqrt{c}$ (the factor of $2$ is for good measure) to get a valid upper bound.

For $s = k + 1$, a bound for overall contribution is given by 
$$T_{n+1} := \frac{1}{n^k}c^kn^{k+2} d^{k+1}n^{2b(k-k-1)} = d(dc)^kn^{2(1-b)} \leq d^{k+1}n^{2(1-b)}.$$

Similarly, for $s = k$, a bound for overall contribution is given by 
$$T_n := \frac{1}{n^k}c^kn^{k+1} d^{k+1}n^{2b(k-k)} = d(dc)^kn^{0} \leq d^{k+1}n.$$

Since $2(1 - b) > 1$, we have $T_{n+1} >> T_n$. The other contributions are also negligible. Therefore, for large enough $n$, we have, $\mathbb{E}\operatorname{tr}(S_n^k) = O (d^{k+1}n^{2(1-b)})$. Plugging this into (\ref{eqn:Markov1}), we get
\begin{align*}
    \mathbb{P}(\lambda_{max}(S_n) > t) \leq \frac{d^{k+1}n^{2(1-b)}}{t^k} = d(d/t)^kn^{2(1-b)}.
\end{align*}
We will choose $k$ (depending on $n$) suitably to impose a polynomial decay of fixed order $N \geq 2$ on the tail probability of $\lambda_{max}$. Note that we can choose $d$ to be $1 + \epsilon/2$ because of Condition 3. Since $\lambda_+ = (1 + \sqrt{c})^2 >1$, we always have $t = \lambda_+ + \epsilon > 1 + \epsilon/2 = d$. In particular, $\log_n(d/t) < 0$.  For this note that
\begin{align*}
    &\frac{d^{k}n^{2(1-b)}}{t^k} \leq \frac{1}{n^N}\\
    \iff & k\log_n(d/t) + 2(1-b) \leq -N\\
    \iff& k \geq -\frac{N + 2(1 - b)}{\log_n(d/t)}.
\end{align*}
\begin{remark}
    Note that Condition 4 is implied by Conditions 2 and 3. Moreover, Condition 3 is implied if we assume a uniform $2+\eta_0$ moment bound on the innovations.
\end{remark}
\end{proof}

\begin{theorem}\label{Chatterjee_Modified}
    Suppose \textbf{Z} and \textbf{W} are random vectors in $\mathbb{R}^N$ with \textbf{W} having independent components. For $1 \leq i \leq N$, let
\begin{enumerate}
    \item $A_i := \mathbb{E}|\mathbb{E}(z_i|z_1, \ldots, z_{i-1})-\mathbb{E}w_i|;\quad B_i := \mathbb{E}|\mathbb{E}(z_i^2|z_1, \ldots, z_{i-1})-\mathbb{E}w_i^2|$,
    \item $\Phi_i := [z_{1},\ldots, z_{i},w_{i+1},\ldots, w_{N}];\quad \Phi_i^0 := [z_{1},\ldots, z_{i-1},0,w_{i+1},\ldots, w_{N}]$,
    \item $\Psi_i^\lambda := \lambda \Phi_i^0 + (1-\lambda)\Phi_i, \lambda \in [0,1]$.
\end{enumerate}
Let $M_3$ be a bound on $\underset{i}{\max}\,\mathbb{E}|X_i|^3+\mathbb{E}|Y_i|^3$. Suppose $f:\mathbb{R}^N \rightarrow \mathbb{R}$ is a thrice continuously differentiable function. For $r=1,2$, let $L_r(f)$ be a constant such that $|\partial_i^rf(\textbf{x})| \leq L_r(f)$ for each $i$ and $\textbf{x}$, where $\partial_i^r$ denotes the r-fold derivative in the $i^{th}$ coordinate. Then
\begin{align}\label{chatterjee_modified_expression}
    |\mathbb{E}f(\textbf{Z}) - \mathbb{E}f(\textbf{W})| \leq
    \sum_{i=1}^N (A_iL_1(f) +& B_iL_2(f)) + \frac{M_3}{6}\mathbb{E}\bigg(\underset{\lambda \in [0,1]}{\sup}|\partial_i^3 f(\Psi_i^\lambda)|+\underset{\lambda \in [0,1]}{\sup}|\partial_i^3 f(\Psi_{i-1}^\lambda)|\bigg).
\end{align}
\end{theorem}

\begin{proof}
The proof is very similar to that of Theorem 1.1 of \cite{Chatterjee}.
\end{proof}

\subsection{Proof of Theorem \ref{Lindeberg_implementation}}\label{sec:Proof_Lindeberg_Implementation}
\begin{proof}
Separating the real and imaginary parts of $Z_n^{(r)},W_n^{(r)}$, we reshape them into vectors as follows:
\begin{enumerate}
    \item $\textbf{W} = \operatorname{vec}(\Re(W_n^{(1)}),\Im(W_n^{(1)}), \ldots, \Re(W_n^{(K)}),\Im(W_n^{(K)})) \in \mathbb{R}^{N}$,
    \item $\textbf{Z} = \operatorname{vec}(\Re(Z_n^{(1)}),\Im(Z_n^{(1)}), \ldots, \Re(Z_n^{(K)}),\Im(Z_n^{(K)})) \in \mathbb{R}^{N}$.
\end{enumerate}
For a fixed $z = u  + \mathbbm{i}v\in \mathbb{C}_+$, let $\mathcal{Q}(\textbf{m}) = (\mathcal{S}(\textbf{m}) - zI_p)^{-1}$. Also, let 
\begin{align}\label{defining_f}
    f: \mathbb{R}^{N} \rightarrow \mathbb{C}; \quad f(\textbf{m}) := \frac{1}{p}\operatorname{trace}(\mathcal{Q}(\textbf{m})).
\end{align}
Clearly, $f(\textbf{Z}) = s_n(z)$ and $f(\textbf{W}) = t_n(z)$. We will use a modified version of Theorem 1.1 from \cite{Chatterjee} to achieve this. As per the notations of Theorem \ref{Chatterjee_Modified}, we have $A_i = 0,B_i = 0$ for $i \in [N]$ since the entries of $\textbf{Z}$ are independent and the first two moments of \textbf{Z} and \textbf{W} match. Clearly $M_3 < \infty$ in this case. So all that remains is to show is that the last term in (\ref{chatterjee_modified_expression}) can be made arbitrarily small for large $n$.

For arbitrary $l \in [N]$ and $\lambda \in [0,1]$, we evaluate $\underset{\lambda \in [0,1]}{\sup}|\partial_l^3 f(\Phi_l^\lambda)|$. Denote $\textbf{m} = \Psi_l^\lambda$. Using (\ref{third_derivative_formula}), we have
\begin{align}\label{third_derivative_bound}
& p|\partial_l^3 f(\textbf{m})| \leq 6||\mathcal{Q}(\textbf{m})||_{op}^4||\partial_l \mathcal{S}(\textbf{m})||_F^3 + 6||\mathcal{Q}(\textbf{m})||_{op}^3||\partial_l^2 \mathcal{S}(\textbf{m})||_F ||\partial_l \mathcal{S}(\textbf{m})||_F \\ \notag
    \implies& |\partial_l^3 f(\textbf{m})| \leq \frac{6}{pv^4}||\partial_l \mathcal{S}(\textbf{m})||_F^3 + \frac{6}{pv^3} ||\partial_l^2 \mathcal{S}(\textbf{m})||_F ||\partial_l \mathcal{S}(\textbf{m})||_F.
\end{align}
Here we used the fact that $||\mathcal{Q}(\textbf{m})||_{op} \leq 1/v$ holds for any \textbf{m}. Since $l \in [N] = [2Kpn]$, there exists $k \in [K]$ such that $np(2k-2)+1 \leq l \leq 2npk$. 

\textbf{Case1:} $np(2k-2)+1 \leq l \leq np(2k-1)$ for some $k \in [K]$. Then, $m_l$ is an entry in the real part of $\mathcal{X}_k(\textbf{m})$, say the $(i,j)^{th}$ entry. Using Lemma (\ref{partial_derivative_formula}), we derive an expression for $\partial_l S(\textbf{m})$.
\begin{align*}
    n\partial_l \mathcal{S}(\textbf{m})
    &= (\partial_l \mathcal{X}(\textbf{m})) \mathcal{X}(\textbf{m})^* + \mathcal{X}(\textbf{m}) (\partial_l \mathcal{X}(\textbf{m})^*)\\ \notag
    &= \sum_{r=1}^K (\partial_l \mathcal{X}_r(\textbf{m}))\mathcal{X}(\textbf{m})^* + \mathcal{X}(\textbf{m}) \sum_{r=1}^K (\partial_l \mathcal{X}_r(\textbf{m})^*)\\ \notag
    &= (\partial_l \mathcal{X}_k(\textbf{m})) \mathcal{X}(\textbf{m})^* + \mathcal{X}(\textbf{m}) (\partial_l \mathcal{X}_k(\textbf{m})^*)\\
    &= U_{\cdot i}^{(k)}V_{j\cdot}^{(k)}\mathcal{X}(\textbf{m})^* + \mathcal{X}(\textbf{m})V_{\cdot j}^{(k)}U_{i\cdot}^{(k)}.
\end{align*}
\textbf{Case2:} Suppose $np(2k-1)+1 \leq l \leq 2npk$. Let $l$ be the $(i,j)^{th}$ entry as before. In this case, we have
\begin{align*}
    n\partial_l \mathcal{S}(\textbf{m}) = \mathbbm{i}\bigg(U_{\cdot i}^{(k)}V_{j\cdot}^{(k)}\mathcal{X}(\textbf{m})^* - \mathcal{X}(\textbf{m})V_{\cdot j}^{(k)}U_{i\cdot}^{(k)}\bigg).
\end{align*}
In either case, using basic inequalities involving operator and Frobenius norms, we observe
\begin{align*}
    ||\partial_l \mathcal{S}(\textbf{m})||_F 
    &\leq  (||U_{\cdot i}^{(k)}V_{j\cdot}^{(k)}\mathcal{X}(\textbf{m})^*||_F + ||\mathcal{X}(\textbf{m})V_{\cdot j}^{k}U_{i\cdot}^{(k)}||_F)/n \\ \notag
    &\leq (||U_{\cdot i}^{(k)}||_2\,||V_{j\cdot}^{(k)}\mathcal{X}(\textbf{m})^*||_2 + ||\mathcal{X}(\textbf{m})V_{\cdot j}^{(j)}||_2\,||U_{i\cdot}^{(k)}||_2)/n\\ \notag
    &\leq \sum_{r=1}^K (||U_{\cdot i}^{(k)}||_2\,||V_{j\cdot}^{(k)}\mathcal{X}_r(\textbf{m})^*||_2 + ||\mathcal{X}_r(\textbf{m})V_{\cdot j}^{(k)}||_2\,||U_{i\cdot}^{(k)}||_2)/n\\ \notag
    &\leq \sum_{r=1}^K ||U_{\cdot i}^{(k)}||_2\,||V_{j\cdot}^{(k)}||_2 ||\mathcal{X}_r(\textbf{m})||_{op}/n + ||V_{\cdot j}^{(k)}||_2\,||U_{i\cdot}^{(k)}||_2 ||\mathcal{X}_r(\textbf{m})||_{op}/n\\ \notag
    &\leq {2K\tau n^{-\frac{1}{2}}}||\mathcal{X}_r(\textbf{m})/\sqrt{n}||_{op}.
\end{align*}
By Theorem \ref{thm:TruncatedEdgeBound}, for sufficiently large $n$ we have the following bound for $||\partial_l \mathcal{S}(\textbf{m})||_F$.
\begin{align*}
    &||\mathcal{X}_r(\textbf{m})/\sqrt{n}||_{op} 
    =\, ||\lambda\mathcal{X}_r(\Phi_l^0)/\sqrt{n} + (1-\lambda)\mathcal{X}_r(\Phi_l)/\sqrt{n}||_{op}
    \leq\, 4C\tau (1+\sqrt{c}) \, \text{ a.s.} \\
    \implies&  ||\partial_l \mathcal{S}(\textbf{m})||_F \leq {8KC\tau^2n^{-0.5}(1+\sqrt{c})} := C_1 n^{-0.5}.
\end{align*}
Simplifying $\partial_l^2 \mathcal{S}(\textbf{m})$ using (\ref{partial_derivative_formula}) when $l$ corresponds to the $(i,j)^{th}$ entry in the real part of $\mathcal{X}_k(\textbf{m})$, we have
\begin{align*}
    n\partial_l^2 \mathcal{S}(\textbf{m}) &= \partial_l (U_{\cdot i}^{(k)}V_{j\cdot}^{(k)}\mathcal{X}(\textbf{m})^* + \mathcal{X}(\textbf{m})V_{\cdot j}^{(k)}U_{i\cdot}^{(k)})\\
    &=\sum_{r=1}^K \partial_l(U_{\cdot i}^{(k)}V_{j\cdot}^{(k)}\mathcal{X}_r(\textbf{m})^* +  \mathcal{X}_r(\textbf{m})V_{\cdot j}^{(k)}U_{\cdot i}^{(k)})\\ \notag
    &=\partial_l(U_{\cdot i}^{(k)}V_{j\cdot}^{(k)}\mathcal{X}_k(\textbf{m})^* +  \mathcal{X}_k(\textbf{m})V_{\cdot j}^{(k)}U_{\cdot i}^{(k)})= 2U_{\cdot i}^{(k)}V_{j\cdot}^{(k)}V_{\cdot j}^{(k)}U_{\cdot i}^{(k)}\\ \notag
    \implies ||\partial_l^2 S(\textbf{m})||_F &\leq {2\tau^2}/{n}.
\end{align*}
The same bound works even when $l$ corresponds to the $(i,j)^{th}$ entry of the imaginary part of $\mathcal{X}_k(\textbf{m})$. Therefore, an upper bound for the third partial derivative of $f$ is given by
\begin{align*}
     |\partial_l^3 f(\textbf{m})|  \leq \frac{6}{pv^4}(C_1n^{-0.5})^3 + \frac{6}{pv^3} \frac{2\tau^2}{n}{C_1n^{-0.5}} = {C(c,K, v, \tau)}{n^{-2.5}}.
\end{align*}

Since $l\in[N]$ and $\lambda \in [0,1]$ were arbitrary, we use Theorem \ref{Chatterjee_Modified} to conclude that 
\begin{align*}
    |\mathbb{E}f(\textbf{Z}) - \mathbb{E}f(\textbf{W})| \leq&\, \sum_{l=1}^N |\mathbb{E}f(\Phi_l) - \mathbb{E}f(\Phi_{l-1})|
    \leq\, 2Kpn \times \frac{M_3}{6} \frac{C(c,K, v, \tau)}{n^{2.5}} \rightarrow 0.
\end{align*}
\end{proof}

\section{Existence of solution under general conditions}\label{sec:ProofExistenceGeneral}

\begin{lemma}\label{Im_zgtauz_geq_0}
    For any $z \in \mathbb{C}_+$ and $r \in [K]$ and $\tau > 0$, we have $\Im(zg_r^\tau(z)) \geq 0$.
\end{lemma}
\begin{proof}
    Note that by definition, $g_{rn}^\tau(\cdot)$ are Stieltjes Transforms of measures on subsets of positive reals. In particular, this implies that $\Im(zg_{rn}^\tau(z)) \geq 0$. By Theorem \ref{Existence_A12}, for a fixed $\tau>0$, we have $g_{rn}^\tau \xrightarrow{} g_r^\tau$ where the convergence is uniform in any compact subset of $\mathbb{C}_+$. Therefore we have $\Im(zg_r^\tau(z)) \geq 0$.
\end{proof}

\subsection{Proof of Step7 and Step8 of Theorem \ref{Existence_General}}
\begin{proof} For any fixed $\tau>0$ and $r \in [K]$, note that $|h_{rn}^\tau(z)| \leq C_0/v$ implies that $|h_r^\tau(z)| \leq C_0/v$. So using arguments similar to those used in Theorem \ref{CompactConvergence} and Montel's Theorem, $\mathcal{H}_r = \{h_r^\tau: \tau>0\}$ and $\mathcal{G}_r = \{g_r^\tau: \tau>0\}$ are normal families for any $r \in [K]$. Let $x_r^\infty$ and $y_r^\infty$ be any subsequential limit points corresponding to the subsequences $\{\textbf{h}^{\tau_n}\}_{n=1}^\infty$ and $\{\textbf{g}^{\tau_n}\}_{n=1}^\infty$ resepctively where $\tau_n \rightarrow \infty$.
    
    Note that $x_r^\infty$ is a Stieltjes Transform by Theorem \ref{Grommer}. In particular, this implies that for any $z \in \mathbb{C}_+$, we must have $\Im(x_r^\infty(z)) > 0$ by a basic property of Stieltjes Transforms. For large $n$, we have the following bound.
\begin{align}
    \absmod{\frac{\theta_r}{-z(1+c\boldsymbol{\theta}^T\textbf{h}^{\tau_n})}} \leq \frac{1}{c|z|\Im(h_r^{\tau_n}(z))} \xrightarrow{} \frac{1}{c|z|\Im(x_r^{\infty}(z))} < \infty.
\end{align}
Also, we have
\begin{align}
    \absmod{\frac{\theta_r}{-z(1+c\boldsymbol{\theta}^T\textbf{x}^\infty)}} \leq \frac{1}{c|z|\Im(x_r^\infty(z))} < \infty.
\end{align}
Thus we have
\begin{align}
    &|O_{r}(z, c\textbf{h}^{\tau_n}, G^{\tau_n}) - O_{r}(z, c\textbf{x}^\infty, G)| \\ \notag
    \leq& \,|O_{r}(z, c\textbf{h}^{\tau_n}, G^{\tau_n}) - O_{r}(z, c\textbf{h}^{\tau_n}, G)| + |O_{r}(z, c\textbf{h}^{\tau_n}, G) - O_{r}(z, \textbf{x}^\infty, G)|\\ \notag
    =& \, \absmod{\int \frac{\theta_r}{-z(1+c\boldsymbol{\theta}^T\textbf{h}^{\tau_n})}d\{G^{\tau_n}(\boldsymbol{\theta})-G(\boldsymbol{\theta})\}}  + \absmod{\int \frac{\theta_rdG(\boldsymbol{\theta})}{-z(1+c\boldsymbol{\theta}^T\textbf{h}^{\tau_n})} - \int\frac{\theta_rdG(\boldsymbol{\theta})}{-z(1+c\boldsymbol{\theta}^T\textbf{x}^\infty)}}. \notag
\end{align}
Noting that $G^{\tau_n} \xrightarrow{d} G$ and applying DCT, we therefore have $\textbf{O}(z, c\textbf{h}^{\tau_n}, G^{\tau_n}) \xrightarrow{} \textbf{O}(z, c\textbf{x}^\infty, G)$. By Theorem \ref{Existence_A12}, for any $n \in \mathbb{N}$, we have $\textbf{g}^{\tau_n} = \textbf{O}(z, c\textbf{h}^{\tau_n}, G^{\tau_n})$. Since $\textbf{g}^{\tau_n} \xrightarrow{} \textbf{y}^\infty$, we therefore have $\textbf{y}^\infty = \textbf{O}(z, c\textbf{x}^\infty, G)$.

Similarly, we observe that $y_r^\infty$ is a Stieltjes Transform by Theorem \ref{Grommer} which implies, in particular, that $\Im(y_r^\infty(z)) > 0$ for any $z \in \mathbb{C}_+$. For large $n$, we have the following bound.
\begin{align}
    \absmod{\frac{\lambda_r}{-z(1+\boldsymbol{\lambda}^T\textbf{g}^{\tau_n})}} \leq \frac{1}{|z|\Im(g_r^{\tau_n}(z))} \xrightarrow{} \frac{1}{|z|\Im(y_r^{\infty}(z))} < \infty.
\end{align}
Also, we have
\begin{align}
    \absmod{\frac{\lambda_r}{-z(1+\boldsymbol{\lambda}^T\textbf{y}^\infty)}} \leq \frac{1}{|z|\Im(y_r^\infty(z))} < \infty.
\end{align}

Now we try to establish
\begin{align}
    &|O_{r}(z, \textbf{g}^{\tau_n}, H^{\tau_n}) - O_{r}(z, \textbf{y}^\infty, H)| \\ \notag
    \leq& \,|O_{r}(z, \textbf{g}^{\tau_n}, H^{\tau_n}) - O_{r}(z, \textbf{g}^{\tau_n}, H)| + |O_{r}(z, \textbf{g}^{\tau_n}, H) - O_{r}(z, \textbf{y}^\infty, H)|\\ \notag
    =& \, \absmod{\int \frac{\lambda_r}{-z(1+\boldsymbol{\lambda}^T\textbf{g}^{\tau_n})}d\{H^{\tau_n}(\boldsymbol{\lambda})-H(\boldsymbol{\lambda})\}}  + \absmod{\int \frac{\lambda_rdH(\boldsymbol{\lambda})}{-z(1+\boldsymbol{\lambda}^T\textbf{g}^{\tau_n})} - \int\frac{\lambda_rdH(\boldsymbol{\lambda})}{-z(1+\boldsymbol{\lambda}^T\textbf{y}^\infty)}} \notag.
\end{align}
Noting that $H^{\tau_n} \xrightarrow{d} H$ and applying DCT, we therefore have $\textbf{O}(z, \textbf{g}^{\tau_n}, H^{\tau_n}) \xrightarrow{} \textbf{O}(z, \textbf{y}^\infty, H)$. By Theorem \ref{Existence_A12}, for any $n \in \mathbb{N}$, we have $\textbf{h}^{\tau_n}(z) = \textbf{O}(z, \textbf{g}^{\tau_n}(z), H^{\tau_n})$. Since $\textbf{h}^{\tau_n} \xrightarrow{} \textbf{x}^\infty$, we therefore have $\textbf{x}^\infty(z) = \textbf{O}(z, \textbf{y}^\infty(z), H)$.

Thus we see that any (pair of) subsequential limit ($\textbf{x}^\infty, \textbf{y}^\infty$) satisfies (\ref{h_main_equation}), (\ref{g_main_equation}) and (\ref{hg_recursive}). By Theorem \ref{Uniqueness2}, all subsequential points must coincide which we will denote as $\textbf{h}^\infty, \textbf{g}^\infty$. This establishes (1) and (2) of Theorem \ref{Existence_General}.

We now show that $s^\tau(z) \xrightarrow{a.s.} s_{F}(z)$ where $s_{F}(z)$ is defined in (\ref{s_main_eqn}). Note that 
\begin{align}
    &\absmod{s^\tau(z) - s_F(z)}\\ \notag
    =& \, \absmod{\int \frac{dH^\tau(\boldsymbol{\lambda})}{-z(1+\boldsymbol{\lambda}^T\textbf{g}^\tau(z))} -\int \frac{dH(\boldsymbol{\lambda})}{-z(1+\boldsymbol{\lambda}^T\textbf{g}^\infty(z))}}\\ \notag
    \leq& \, \absmod{\int \frac{1}{-z(1+\boldsymbol{\lambda}^T\textbf{g}^\tau(z))}d\{H^\tau(\boldsymbol{\lambda}) - H(\boldsymbol{\lambda})\}} +\\
    &\phantom{\int \frac{1}{-z(1+\boldsymbol{\lambda}^T\textbf{g}^\tau(z))}d}\int\absmod{\frac{1}{-z(1+\boldsymbol{\lambda}^T\textbf{g}^\tau(z))} - \frac{1}{-z(1+\boldsymbol{\lambda}^T\textbf{g}^\infty(z))}}dH(\boldsymbol{\lambda}).\notag
\end{align}
Note that for any $r \in [K]$, $\Im(zg_r^\tau(z)) \geq 0$ as $g_r^\tau(\cdot)$ are Stieltjes Transforms of measures on subsets of positive reals. Since $g_r^\tau \xrightarrow{} g_r^\infty$, we also have $\Im(zg_r^\infty(z)) \geq 0$. Therefore, the integrands associated in the above expansion are bounded by $1/\Im(z)$ and $2/\Im(z)$ respectively. Thus, by application of DCT and using the fact that $H^\tau \xrightarrow{d} H$, we get $|s^\tau(z) - s_F(z)| \rightarrow 0$. This establishes the equivalence between $(\ref{s_main_eqn})$ and (\ref{s_alt_char_1}).

Finally, we will show that $s_F(\cdot)$ satisfies the conditions of Theorem \ref{GeroHill}. From Lemma \ref{Lemma_Im_h_g_bound}, we have
\begin{align}
|\mathbbm{i}y(\textbf{h}^\infty(\mathbbm{i}y))^T\textbf{g}^\infty(\mathbbm{i}y)| \leq \sum_{r=1}^K y|h_r^\infty(\mathbbm{i}y)|\,|g_r^\infty(\mathbbm{i}y)| \leq Ky\frac{C_0}{y}\frac{C_0}{y} \rightarrow 0 \text{ as } y \rightarrow \infty.
\end{align}
Thus using (\ref{s_alt_char_1}), we have
\begin{align}
zs_F(z) = -z(z^{-1} + \textbf{h}^T(z)\textbf{g}(z))\quad \implies\underset{y \rightarrow \infty}{\lim}\mathbbm{i}ys_F(\mathbbm{i}y) = -1.
\end{align}
This concludes the proof of Step7 and Step8 of Theorem \ref{Existence_General}.
\end{proof}

\subsection{Truncation, Centralization and Rescaling Steps}\label{Step10}
{\large\textbf{Impact of spectral truncation of scaling matrices}}

Let $H_r$ and $G_r$ be the marginal distributions of $H$  and $G$ respectively for $r \in [K]$. Using (\ref{R1}), (\ref{R3}), we observe that
\begin{align}
||F^{S_n} - F^{S_n^\tau}|| &\leq \frac{1}{p}\operatorname{rank}(S_n - S_n^\tau)\\ \notag
&\leq \frac{2}{p}\operatorname{rank}\bigg(\sum_{r=1}^K U_{n}^{(r)}Z_{n}^{(r)} V_{n}^{(r)} - \sum_{r=1}^K U_{n}^{r,\tau} Z_{n}^{(r)} V_{n}^{r,\tau}\bigg) \\ \notag
&\leq \frac{2}{p}\sum_{r=1}^K \operatorname{rank}\bigg(U_{n}^{(r)}Z_{n}^{(r)} V_{n}^{(r)} - U_{n}^{r,\tau} Z_{n}^{(r)} V_{n}^{r,\tau}\bigg)\\ \notag
&\leq \frac{2}{p} \sum_{r=1}^K \bigg(\operatorname{rank}(U_{n}^{(r)} - U_{n}^{r}(\tau)) + \operatorname{rank}(V_{n}^{(r)} - V_{n}^{r}(\tau))\bigg) \\ \notag
&= 2\sum_{r=1}^K \bigg((1 - H_{rn}(\tau))+ (1 - G_{rn}(\tau))\bigg) \\ \notag
&\longrightarrow  2K\sum_{r=1}^K(1 - H_r(\tau) + 1 - G_r(\tau)) \longrightarrow 0 \text{, as } \tau \rightarrow \infty.
\end{align}
Here we used the fact that $\tau > 0$ was chosen in Step1 of Theorem \ref{Existence_General} such that $(\tau, \ldots, \tau)$ is a continuity point of $H$.

{\large\textbf{Truncation of innovation entries}}

Again using (\ref{R1}), (\ref{R3}), we have
\begin{align}\label{Tn_Un}
    ||F^{S_n^\tau} - F^{\hat{S}_n}|| &\leq \frac{1}{p} \operatorname{rank}(S_n^\tau - \hat{S}_n)\\ \notag
    &\leq \frac{2}{p}\operatorname{rank}\bigg(\sum_{r=1}^K U_{n}^{(r)}(\tau) Z_{n}^{(r)} V_{n}^{(r)}(\tau) - \sum_{r=1}^K U_{n}^{(r)}(\tau) \hat{Z}_{n}^{(r)} V_{n}^{(r)}(\tau)\bigg) \\ \notag
    &\leq \frac{2}{p}\sum_{r=1}^K \operatorname{rank} \bigg(U_{n}^{(r)}(\tau) (Z_{n}^{(r)} - \hat{Z}_{n}^{(r)}) V_{n}^{(r)}(\tau)\bigg)\\ \notag
    &\leq \frac{2}{p}\sum_{r=1}^K\operatorname{rank}(Z_{n}^{(r)} - \hat{Z}_{n}^{(r)}).
\end{align}
\begin{remark}\label{not_necessarily_diagonal}
    Note that these results hold even if the scaling matrices are not necessarily diagonal.
\end{remark}
Recall that $z_{ij}^{(r)}$ and $\hat{z}_{ij}^{(r)}$ denote the $ij^{th}$ elements of $Z^{(r)}$ and $\hat{Z}^{(r)}$, respectively. For $r \in [K]$, define $I_{ij}^{(r)} := \mathbbm{1}_{\{z_{ij}^{(r)} \neq \Hat{z}_{ij}^{(r)}\}} = \mathbbm{1}_{\{|z_{ij}^{(r)}| > n^b\}}$ where $b$ is defined in \textbf{A2}. Using (\ref{R2}), we have $$\operatorname{rank}(Z_r - \Hat{Z}_r) \leq \sum_{ij} I_{ij}^{(r)}.$$
Now observe that
\begin{align*}
 &\mathbb{P}(I_{ij}^{(r)} = 1)
=\mathbb{P}(|z_{ij}^{(r)}| > n^b)
\leq \dfrac{\mathbb{E}|z_{ij}^{(r)}|^{2+\eta_0}}{n^{b(2+\eta_0)}}
\leq \dfrac{M_{2+\eta_0}}{n^{b(2+\eta_0)}}.
\end{align*}
Since $\frac{1}{2+\eta_0}<b<\frac{1}{2}$, we have 
\begin{align*}
\dfrac{1}{p}\sum_{i,j}\mathbb{P}(I_{ij}^{(r)} = 1)
\leq \frac{npM_{2+\eta_0}}{pn^{b(2+\eta_0)}}
     \longrightarrow 0.
\end{align*}
Also, we have $\operatorname{Var}I_{ij}^{(r)} \leq \mathbb{P}(I_{ij}^{(r)}=1)$. For arbitrary $\epsilon > 0$, and large enough $n$, we must  have $\sum_{i,j}\operatorname{Var}I_{ij}^{(r)} \leq p\epsilon/2$. Finally we use Bernstein's Inequality to get the following bound:
\begin{align*}
    \mathbb{P}\bigg(\dfrac{1}{p}\sum_{i,j}I_{ij}^{(r)} > \epsilon\bigg)
    &\leq \hspace{1mm} \mathbb{P}\bigg(\sum_{i,j}(I_{ij}^{(r)} - \mathbb{P}(I_{ij}^{(r)} = 1)) > \dfrac{p\epsilon}{2}\bigg)\\
    &\leq \hspace{1mm} 2\exp{\bigg(-\dfrac{p^2\epsilon^2/4}{2(p\epsilon/2 + \sum_{i,j} \operatorname{Var}I_{ij}^{(r)})}\bigg)}\\
    &\leq \hspace{1mm} 2\exp{\bigg(-\dfrac{p^2\epsilon^2/4}{2(p\epsilon/2 + p\epsilon/2)}\bigg)} = 2\exp{\bigg(-\dfrac{p\epsilon}{8}\bigg)}.
\end{align*}
By Borel Cantelli lemma, $\frac{1}{p}\sum_{ij}I_{ij}^{(r)} \xrightarrow{a.s.}0$ and thus 
\begin{align}\label{rank_difference}
    \frac{1}{p}\operatorname{rank}(Z_n^{(r)}-\hat{Z}_n^{(r)}) \xrightarrow{a.s.} 0.
\end{align} 
Combining this with (\ref{Tn_Un}), we have $||F^{S_n^\tau} - F^{\hat{S}_n}||
\xrightarrow{a.s.} 0$.

{\large\textbf{Centering of entries}}

The next result to be proved is $||F^{\hat{T}_n}- F^{\Tilde{T}_n}|| \xrightarrow{a.s.} 0$. 

Define $\check{W}_n^{(r)} = (\check{w}^{(r)}_{ij}) := (w_{ij}^{(r)}\mathbbm{1}_{\{w_{ij}^{(r)} > n^a\}})$ for $r \in [K]$. Similar to (\ref{rank_difference}), we have
\begin{align}\label{rank_difference2}
\dfrac{1}{p}\operatorname{rank}\check{W}_n^{(r)} = \dfrac{1}{p}\operatorname{rank}(W_n^{(r)} -\hat{W}_n^{(r)}) \xrightarrow{a.s.} 0.
\end{align}

Therefore, we see that
\begin{align}
    ||F^{\hat{T}_n} - F^{\Tilde{T}_n}||
    &\leq \frac{1}{p} \operatorname{rank}(\hat{T}_n - \Tilde{T}_n)\\ \notag
    &\leq \frac{1}{p} \sum_{r=1}^K \operatorname{rank}\bigg(U_{n}^{(r)}(\tau) \hat{W}_{n}^{(r)} V_{n}^{(r)}(\tau) - U_{n}^{(r)}(\tau) \tilde{W}_{n}^{(r)}V_{n}^{(r)}(\tau)\bigg) \\ \notag
    &\leq \frac{1}{p}\sum_{r=1}^K \operatorname{rank}(\hat{W}_{n}^{(r)} -\tilde{W}_{n}^{(r)})\\ \notag
    &= \frac{1}{p} \sum_{r=1}^K \operatorname{rank} (\mathbb{E}\hat{W}_{n}^{(r)})\\ \notag
    &= \frac{1}{p} \sum_{r=1}^K \operatorname{rank}(\mathbb{E} \check{W}_{n}^{(r)}) \longrightarrow 0,
\end{align}
where the second last inequality follows since $\tilde{W}_n^{(r)}=\hat{W}_n^{(r)}-\mathbb{E}\hat{W}_n^{(r)}$ and the last equality is due to the fact that $\textbf{0} = \mathbb{E}W_n^{(r)} = \mathbb{E}\hat{W}_{n}^{(r)} + \mathbb{E}\check{W}_{n}^{(r)}$ and the convergence is due to (\ref{rank_difference2}).

{\large\textbf{Impact of rescaling}}

We will now show that $L(F^{\tilde{T}_n}, F^{\Ddot{T}_n}) \xrightarrow{a.s.} 0$.
Recall from Step5 of Theorem \ref{Existence_General} that $\rho_n^2 := \operatorname{var}(\tilde{w}_{11}^{(1)})$. It is clear that $\rho_n \rightarrow 1$. From (\ref{CorollaryA42}), we have
\begin{align*}
     L^4(F^{\tilde{T}_n},F^{\Ddot{T}_n})
    &\leq \frac{2}{p^2} \operatorname{trace}\bigg(\frac{1}{n}\tilde{Y}_n\tilde{Y}_n^* + \frac{1}{n}\Ddot{Y}_n\Ddot{Y}_n^*\bigg) \times \operatorname{trace}\bigg(\frac{1}{n}(\tilde{Y}_n - \Ddot{Y}_n)(\tilde{Y}_n - \Ddot{Y}_n)^*\bigg)\\ \notag
    &\leq \frac{2\tau^4}{p^2} \operatorname{trace}\bigg(\frac{1}{n}\tilde{W}_n\tilde{W}_n^* + \frac{1}{n}\Ddot{W}_n\Ddot{W}_n^*\bigg) \times \operatorname{trace}\bigg(\frac{1}{n}(\tilde{W}_n - \Ddot{W}_n)(\tilde{W}_n - \Ddot{W}_n)^*\bigg)\\ \notag    
    &\leq \frac{2\tau^4}{p^2n^2} \bigg((1+\rho_n^{-2})\sum_{i,j}|\tilde{w}_{ij}|^2\bigg) \bigg(\sum_{i,j}|\tilde{w}_{ij}|^2(1 - \rho_n^{-1})^2\bigg) \xrightarrow{a.s.} 0.
\end{align*}
This concludes the proof.

\section{Supporting Results for Uniqueness}
\begin{lemma}\label{Osgood}
    \textbf{Osgood's Lemma}: A continuous function of several complex variables is holomorphic if it is holomorphic separately in every variable.
\end{lemma}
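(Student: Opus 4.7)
The plan is to reduce the problem to a local statement on polydiscs and then apply the Cauchy integral formula iteratively in each coordinate, using the continuity hypothesis to justify an absolutely convergent power-series expansion. Since holomorphy is a local property, it suffices to fix an arbitrary point $w = (w_1, \ldots, w_n) \in \Omega \subseteq \mathbb{C}^n$ and a closed polydisc $\overline{\Delta} = \overline{\Delta}_1 \times \cdots \times \overline{\Delta}_n$ centered at $w$, of polyradius $(r_1, \ldots, r_n)$, with $\overline{\Delta} \subset \Omega$. On $\overline{\Delta}$, the function $f$ is continuous (hence bounded by some $M < \infty$) and, by hypothesis, holomorphic in each variable when the others are frozen at any value in the appropriate disc.

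Next, I would iterate the one-variable Cauchy integral formula. Fixing $(z_2, \ldots, z_n)$, the map $\zeta_1 \mapsto f(\zeta_1, z_2, \ldots, z_n)$ is holomorphic on $\Delta_1$ and continuous on $\overline{\Delta}_1$, so
\begin{equation*}
f(z_1, z_2, \ldots, z_n) = \frac{1}{2\pi i} \oint_{\partial \Delta_1} \frac{f(\zeta_1, z_2, \ldots, z_n)}{\zeta_1 - z_1}\, d\zeta_1.
\end{equation*}
Iterating in the remaining variables (each step is legal because of separate holomorphy plus continuity, which lets Fubini apply to interchange the integrations) yields
\begin{equation*}
f(z) = \frac{1}{(2\pi i)^n} \oint_{\partial \Delta_n} \cdots \oint_{\partial \Delta_1} \frac{f(\zeta_1, \ldots, \zeta_n)}{(\zeta_1 - z_1) \cdots (\zeta_n - z_n)}\, d\zeta_1 \cdots d\zeta_n
\end{equation*}
for all $z$ in the open polydisc $\Delta$.

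From here I would expand each Cauchy kernel as a geometric series $\tfrac{1}{\zeta_j - z_j} = \sum_{k_j \geq 0}(z_j - w_j)^{k_j}/(\zeta_j - w_j)^{k_j + 1}$, valid uniformly for $z$ in any smaller polydisc of polyradii $\rho_j < r_j$ and $\zeta_j \in \partial \Delta_j$. Continuity of $f$ on the compact torus $\partial \Delta_1 \times \cdots \times \partial \Delta_n$ gives an $M$-bound, so the multi-indexed series converges absolutely and uniformly on that smaller polydisc; Fubini then permits termwise integration, producing a convergent power series
\begin{equation*}
f(z) = \sum_{\alpha \in \mathbb{N}^n} c_\alpha (z - w)^\alpha, \qquad c_\alpha = \frac{1}{(2\pi i)^n}\oint \cdots \oint \frac{f(\zeta)\, d\zeta_1 \cdots d\zeta_n}{(\zeta_1 - w_1)^{\alpha_1 + 1} \cdots (\zeta_n - w_n)^{\alpha_n + 1}}.
\end{equation*}
Since $f$ admits a convergent multivariate power series at every point of $\Omega$, it is jointly holomorphic there.

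The only real subtlety, and the step I expect to be the main obstacle to write cleanly, is the justification of the iterated Cauchy formula: one must know that the inner integrals depend continuously (in fact, holomorphically) on the remaining variables so that the next Cauchy integration is legitimate. This is precisely where the continuity hypothesis enters — it ensures that $f$ is bounded on each torus and that the one-variable Cauchy integral representations glue together continuously. Without it, this is the classical Hartogs theorem and requires a much more delicate argument; with continuity assumed, a routine application of the dominated convergence theorem (or uniform continuity on the compact torus) makes each interchange of integrations rigorous.
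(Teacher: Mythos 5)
The paper states Osgood's Lemma without proof---it is treated as a classical background result in the appendix---so there is no internal argument to compare against. Your proof is the standard one and is essentially correct: localize to a closed polydisc, use separate holomorphy plus continuity to obtain the iterated Cauchy integral representation on the distinguished boundary, expand the Cauchy kernels as geometric series converging uniformly on a slightly smaller polydisc, and integrate termwise using the $M$-bound from continuity on the compact torus, which yields a locally convergent multivariate power series and hence joint holomorphy.

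One small point of exposition worth tightening: the iteration of the one-variable Cauchy formula does not in fact require knowing beforehand that the inner integrals depend continuously (or holomorphically) on the remaining variables. At each stage you simply apply the one-variable Cauchy formula to $f$ with the other coordinates frozen and substitute the resulting representation inside the previous integral; the continuity hypothesis is what makes $f$ a bounded Borel function on the product torus so that Fubini/Tonelli applies when you pass from nested integrals to a product integral and, more importantly, when you interchange the multi-indexed geometric series with the integration. So the ``main obstacle'' you flag is real, but it lives in the termwise-integration step rather than in the iteration itself. With that clarification, the argument is complete, and your closing remark correctly identifies that dropping the continuity hypothesis turns this into Hartogs' theorem, which requires genuinely harder tools (subharmonicity or the Baire-category/Osgood descent argument).
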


\begin{lemma}\label{jointlyHolomorphic}
    Let $D \subset \mathbb{C}$ be a domain. For $j \in [n]$, let $f_j:D^m \rightarrow \mathbb{C}$ be holomorphic. Then, $\textbf{f}: D^m \rightarrow \mathbb{C}^n$ is holomorphic.
\end{lemma}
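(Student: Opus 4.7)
The plan is to reduce the statement to the definition of holomorphicity for vector-valued maps. Recall that a map $\mathbf{f}: U \to \mathbb{C}^n$ on an open set $U \subset \mathbb{C}^m$ is holomorphic at $\mathbf{z}_0 \in U$ precisely when there exists a $\mathbb{C}$-linear map $L:\mathbb{C}^m\to\mathbb{C}^n$ (the complex Fr\'echet derivative) such that
\[
\mathbf{f}(\mathbf{z}_0+\mathbf{w}) - \mathbf{f}(\mathbf{z}_0) - L(\mathbf{w}) = o(\|\mathbf{w}\|) \quad \text{as } \mathbf{w}\to \mathbf{0}.
\]
So the task reduces to producing such an $L$ at every point $\mathbf{z}_0 \in D^m$ and showing the required little-$o$ estimate.

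First, I would invoke the hypothesis on each coordinate: for every $j\in[n]$, holomorphicity of $f_j:D^m\to\mathbb{C}$ yields a $\mathbb{C}$-linear functional $L_j:\mathbb{C}^m\to\mathbb{C}$ (namely, the collection of Wirtinger derivatives $\partial f_j/\partial z_k(\mathbf{z}_0)$) such that $f_j(\mathbf{z}_0+\mathbf{w})-f_j(\mathbf{z}_0)-L_j(\mathbf{w}) = o(\|\mathbf{w}\|)$. Next, define $L(\mathbf{w}):=(L_1(\mathbf{w}),\ldots,L_n(\mathbf{w}))$; as a tuple of $\mathbb{C}$-linear functionals, $L$ is itself $\mathbb{C}$-linear from $\mathbb{C}^m$ into $\mathbb{C}^n$. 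Using any norm on $\mathbb{C}^n$ (e.g., the Euclidean one), the estimate
\[
\|\mathbf{f}(\mathbf{z}_0+\mathbf{w}) - \mathbf{f}(\mathbf{z}_0) - L(\mathbf{w})\|^2 \;=\; \sum_{j=1}^n \bigl|f_j(\mathbf{z}_0+\mathbf{w}) - f_j(\mathbf{z}_0) - L_j(\mathbf{w})\bigr|^2
\]
is $o(\|\mathbf{w}\|^2)$ since it is a finite sum of quantities of that order. This verifies the definition at the arbitrary point $\mathbf{z}_0$, and the lemma follows.

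There is no serious obstacle here; the only subtlety is making sure one adopts a consistent definition of holomorphy for maps into $\mathbb{C}^n$. If instead one prefers the power-series definition, the argument is even more immediate: each $f_j$ admits a locally convergent multi-variable power series expansion around $\mathbf{z}_0$, and stacking these into an $\mathbb{C}^n$-valued series shows $\mathbf{f}$ is analytic in the vector-valued sense. Either viewpoint makes clear that joint holomorphicity of $\mathbf{f}$ is equivalent to the joint holomorphicity of its coordinate functions, which is exactly the hypothesis. (Osgood's lemma, stated just above, is not needed here since we are given joint --- not merely separate --- holomorphicity of each $f_j$.)
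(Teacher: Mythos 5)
Your proposal is correct and takes essentially the same approach as the paper: both verify Fréchet differentiability of $\mathbf{f}$ by assembling the Fréchet derivatives of the coordinate functions $f_j$ into a single $\mathbb{C}$-linear map, and then pass from the coordinatewise little-$o$ estimates to the vector-valued one using a convenient norm on $\mathbb{C}^n$ (you use the Euclidean norm; the paper uses $\ell^1$, noting norm equivalence, which makes no difference). Your parenthetical remark that Osgood's lemma is not needed here is also a fair observation.
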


\begin{proof}
   We will show that \textbf{f} is Fréchet differentiable. Denote $V = \mathbb{C}^m$ and $W = \mathbb{C}^n$. Fix $\textbf{z} \in V$. For any $j \in [n]$, we have
    \begin{align*}
        \underset{\textbf{h} \rightarrow 0}{\lim}\frac{|f_j(\textbf{z} + \textbf{h}) - f_j(\textbf{z}) - \nabla f_j(\textbf{z})\textbf{h}|}{||\textbf{h}||_V}= 0.
    \end{align*}

Let $A^{(\textbf{z})} \in \mathbb{C}^{n \times m}$ be a matrix with $A_{j\cdot}^{(\textbf{z})} = \nabla f_j(\textbf{z})$. As $W$ is a finite dimensional Banach space, all norms are equivalent. So let $||\textbf{w}||_W = \sum_{j=1}^n|w_j|$. Then, we have
    \begin{align*}
        &\underset{\textbf{h} \rightarrow 0}{\lim}\,\frac{||\textbf{f}(\textbf{z} + \textbf{h}) - \textbf{f}(\textbf{z}) - A^\textbf{z}\textbf{h}||_W}{||\textbf{h}||_V}
        =\, \sum_{j=1}^n\, \underset{\textbf{h} \rightarrow 0}{\lim}\bigg(\frac{|f_j(\textbf{z} + \textbf{h}) - f_j(\textbf{z}) - \nabla f_j(\textbf{z})\textbf{h}|}{||\textbf{h}||_V}\bigg) = 0.
    \end{align*}
\end{proof}
\begin{lemma}\label{lemma_O_of_h_analytic}
    Recall the definition of $\textbf{O}(\cdot)$ from (\ref{defining_Operator_O}). Then, $\textbf{f}:\mathbb{C}_+^K \rightarrow \mathbb{C}_+^K; \quad \textbf{f}(\textbf{h}) = \textbf{O}(z,c\textbf{h},G)$ is a holomorphic function of \textbf{h} and $\textbf{e}:\mathbb{C}_+^K\rightarrow \mathbb{C}_+^K; \quad \textbf{e}(\textbf{g})=\textbf{O}(z,\textbf{g},H)$ is a holomorphic function of \textbf{g}.
\end{lemma}
\begin{proof}
    Note that \textbf{f} is a K-variate function. For each $j \in [K]$, first we show continuity of $f_j$ and then show that $f_j$ is holomorphic separately in each coordinate. By Osgood's Lemma the proof will be complete.

    For arbitrary $\textbf{h} \in \mathbb{C}_+^K$, there exists $\epsilon>0$ such that $\mathcal{B}=B(\textbf{h}; \epsilon) \subset \mathbb{C}_+^K$. Therefore, there exists $M_0>0$ such that $\Im(\tilde{h}_r) \geq M_0$ for all $r \in [K]$ whenever $\tilde{\textbf{h}} \in \mathcal{B}$. Therefore,
    \begin{align*}
        \absmod{f_j(\textbf{h}) - f_j(\tilde{\textbf{h}})}
        &= \absmod{\int \frac{\theta_j dG(\boldsymbol{\theta})}{-z(1+c\boldsymbol{\theta}^T\textbf{h})}-\int \frac{\theta_j dG(\boldsymbol{\theta})}{-z(1+c\boldsymbol{\theta}^T\tilde{\textbf{h}})}}\\
        &\leq \frac{1}{|z|}\sum_{r=1}^K |h_{r}-\tilde{h}_{r}|\int \frac{\theta_j\theta_r dG(\boldsymbol{\theta})}{(1+c\boldsymbol{\theta}^T\textbf{h})(1+c\boldsymbol{\theta}^T\tilde{\textbf{h}})}\\
        &\leq \frac{1}{|z|M_0^2}||\textbf{h} - \tilde{\textbf{h}}||_1\\
        \implies ||\textbf{f}(\textbf{h})-f(\tilde{\textbf{h}})||_1 &\leq \frac{K}{|z|M_0^2}||\textbf{h} - \tilde{\textbf{h}}||_1.
    \end{align*}
This establishes continuity of \textbf{f} at \textbf{h}. For the next part, we will show that for every $j \in [K]$, $f_j(h_r;\textbf{h}_{-r})$ is holomorphic in $h_r$ for each $r \in [K]$. The notation indicates that we are keeping all $K$ coordinates of \textbf{h} fixed except the $r^{th}$ one. Let $\gamma$ be any closed curve in $\mathbb{C}_+$. Then, there exists $M_1 >0$ such that $\Im(h_r) \geq M_1 > 0$ for $h_r \in \gamma$. Then we have
$$\absmod{\frac{\theta_j}{1+c\boldsymbol{\theta}^T\textbf{h}}} \leq \frac{1}{\Im(h_j)} \leq \begin{cases}
    1/M_1 & \text{ if } r = j,\\
    1/\Im(h_j) & \text{ if } r \neq j.\
\end{cases}$$
In either case, we can interchange the two integrals by Fubini. Thus by Cauchy and  Morera's Theorem,
    \begin{align*}
        \int_\gamma f_j(h_r;\textbf{h}_{-r}) dh_r =\frac{1}{-z}\int_\gamma \int \frac{\theta_j dG(\boldsymbol{\theta})}{1 + c\boldsymbol{\theta}^T\textbf{h}} dh_r =\frac{1}{-z}\int\int_\gamma  \frac{\theta_j}{1 + c\boldsymbol{\theta}^T\textbf{h}} dh_r dG(\boldsymbol{\theta}) = 0.
    \end{align*}
This establishes that $f_j$ is holomorphic by Osgood's Lemma. Finally by Lemma \ref{jointlyHolomorphic}, $\textbf{f}$ itself is holomorphic as well. The proofs for \textbf{g} are similar.
\end{proof}
\begin{lemma}\label{PzQz_are_holomorphic}
    Recall the definitions of $\textbf{P}_z, \textbf{Q}_z$ from (\ref{defining_Pz_Qz}). $\textbf{P}_z,\textbf{Q}_z$ are holomorphic functions.
\end{lemma}
\begin{proof}
Writing $\textbf{f}(\textbf{h})=\textbf{O}(z,c\textbf{h},G)$ as in Lemma \ref{lemma_O_of_h_analytic}. Then, we denote the $j^{th}$ coordinate of $\textbf{P}_z$ as
$$p_j(\textbf{h})=\int \frac{\lambda_jdH(\boldsymbol{\lambda})}{-z(1 + \boldsymbol{\lambda}^T\textbf{O}(z,c\textbf{h},G))} = \int \frac{\lambda_jdH(\boldsymbol{\lambda})}{-z(1 + \boldsymbol{\lambda}^T\textbf{f}(\textbf{h}))}.$$
 
Since $p_j$ is K-variate, we will once again apply Osgood's Lemma. To show that $p_j$ is continuous, let $\textbf{h} \in \mathbb{C}_+^K$ be arbitrary. Then there exists $\epsilon>0$ such that $\mathcal{B}=B(\textbf{h};\epsilon) \subset \mathbb{C}_+^K$. Then since $\textbf{f}$ is analytic by Lemma \ref{lemma_O_of_h_analytic}, there exists $M_2>0$ such that for all $r \in [K]$, $\Im(f_r(\tilde{\textbf{h}})) \geq M_2$ whenever $\tilde{\textbf{h}} \in \mathcal{B}$. As a result, we see that 
\begin{align*}
    |p_j(\textbf{h})- p_j(\tilde{\textbf{h}})|
    &= \absmod{\int \frac{\lambda_jdH(\boldsymbol{\lambda})}{-z(1 + \boldsymbol{\lambda}^T\textbf{f}(\textbf{h}))} - \int \frac{\lambda_jdH(\boldsymbol{\lambda})}{-z(1 + \boldsymbol{\lambda}^T\textbf{f}(\tilde{\textbf{h}}))}}\\
    &\leq \sum_{r=1}^K|f_r(\textbf{h})-f_r(\tilde{\textbf{h}})| \int \frac{\lambda_j\lambda_rdH(\boldsymbol{\lambda})}{|z(1 + \boldsymbol{\lambda}^T\textbf{f}(\textbf{h}))(1 + \boldsymbol{\lambda}^T\textbf{f}(\tilde{\textbf{h}}))|}\\
    &\leq ||\textbf{f}(\textbf{h})-\textbf{f}(\tilde{\textbf{h}})||_1 \frac{1}{|z|\Im(f_j(\textbf{h}))\Im(f_r(\tilde{\textbf{h}}))}\\
    &\leq \frac{1}{|z|M_2^2}||\textbf{f}(\textbf{h})-\textbf{f}(\tilde{\textbf{h}})||_1\\
    \implies ||P_z(\textbf{h})-P_z(\tilde{\textbf{h}})||_1 &\leq \frac{K}{|z|M_2^2}||\textbf{f}(\textbf{h})-\textbf{f}(\tilde{\textbf{h}})||_1.
\end{align*}
This establishes continuity of $\textbf{P}_z$ at \textbf{h}. Now, we will show that $p_j(h_r;\textbf{h}_{-r})$ is a holomorphic function of $h_r$ for each $r \in [K]$ keeping $\textbf{h}_{-r}$ fixed. Let $\gamma$ be an arbitrary closed curve in $\mathbb{C}_+$. Since $f_j$ is holomorphic, there exists $M_3>0$ such that $\Im(f_j(h_r;\textbf{h}_{-r}))\geq M_3$ whenever $h_r \in \gamma$. Note that 
$$\absmod{\frac{\lambda_j}{-z(1 + \boldsymbol{\lambda}^T\textbf{f}(h_r;\textbf{h}_{-r}))}} \leq \frac{1}{|z|M_3}.$$
  Therefore by Fubini and Cauchy,
    $$\int_\gamma p_j(h_r;\textbf{h}_{-r})dh_r = \int_\gamma \int p_j(h_r;\textbf{h}_{-r})dH(\boldsymbol{\lambda})dh_r = \int \int_\gamma p_j(h_r;\textbf{h}_{-r})dh_rdH(\boldsymbol{\lambda}) = \int 0\, dH(\boldsymbol{\lambda})=0.$$
    By Morera's theorem, $p_j$ is holomorphic in $h_r$ keeping $\textbf{h}_{-r}$ fixed. Thus by Osgood's Lemma, $p_j$ is holomorphic in $\mathbb{C}_+^K$. Finally by Lemma \ref{jointlyHolomorphic}, $\textbf{P}_z$ itself is holomorphic as well. The proof for $\textbf{Q}_z$ is similar.
\end{proof}

\section{Miscellaneous Proofs}
\subsection{Proof of Theorem \ref{thm:RelaxingCommutativty}}\label{sec:proofRelaxingCommutativty}
\begin{proof}
Recall from (\ref{defining_Sn}) that $S_n = \frac{1}{n}X_nX_n^*$. Analogous to $X_n \in \mathbb{C}^{p\times n}$, we define $Y_{r_0,s_0} \in \mathbb{C}^{p\times n}$ for $r_0,s_0 \in [K]$ as follows:
$$Y_{r_0,s_0} := A_{r_0\rightarrow 1}^\frac{1}{2}Z_1B_{s_0\rightarrow 1}^\frac{1}{2} + \ldots + A_{r_0\rightarrow K}^\frac{1}{2}Z_KB_{s_0\rightarrow K}^\frac{1}{2}.$$

So, $Y_{r_0,s_0}$ is a version of $X_n$ such that the row and the column scaling matrices used across the $K$ coordinates commute among themselves. The sample covariance matrix of $Y_{r_0,s_0}$ is
$$M_{r_0,s_0} := \frac{1}{n}Y_{r_0,s_0}Y_{r_0,s_0}^*.$$
Using (\ref{R1}),(\ref{R3}), we observe that,
\begin{align*}
    ||F^{S_n} - F^{M_{r_0,s_0}}||  &\leq \frac{1}{p}\operatorname{rank}(S_n - M_{r_0,s_0}) \\
    &\leq \frac{2}{p}\operatorname{rank}(X_n - Y_{r_0,s_0})\\
    &\leq \frac{2}{p}\sum_{r=1}^K \bigg(\operatorname{rank} (A_r^\frac{1}{2} - A_{r_0\rightarrow r}^\frac{1}{2}) + \operatorname{rank} (B_r^\frac{1}{2} - B_{s_0\rightarrow r}^\frac{1}{2})\bigg)\\
    &\leq \frac{4}{p}\sum_{r=1}^K \bigg(\operatorname{rank} (P_r - P_{r_0}) + \operatorname{rank} (Q_r - Q_{s_0})\bigg).
\end{align*}
Therefore, under conditions \textbf{C1} or \textbf{C2} of the theorem, $F^{S_n}$ almost surely shares the same weak limit as that of $F^{M_{r_0,s_0}}$. Note that $M_{r_0,s_0}$ is an analog of our original sample covariance matrix $S_n$ but the components of $M_{r_0,s_0}$ satisfy all the conditions (including \textbf{T3}) of the Theorem \ref{mainTheorem}. The almost sure limit of $F^{M_{r_0,s_0}}$ is characterized in the theorem.

To show sufficiency of \textbf{C3}, note that the deterministic equivalent for the resolvent $Q(z) = (S_n - zI_p)^{-1};z \in \mathbb{C}_+$ from Theorem \ref{DeterministicEquivalent} was as follows:
\begin{align*}
\Bar{Q}(z) = \bigg(-zI_p + \sum_{r=1}^K A_r \bigg(\frac{1}{n}\operatorname{trace}\{B_r[I_n + c_n\sum_{s=1}^K\mathbb{E}h_{sn}(z)B_s]^{-1}\}\bigg) \bigg)^{-1}.
\end{align*}
We introduce some efficient wrapper functions to compress the notation. We will denote $\textbf{A} = (A_1, \ldots, A_K)$ and $\textbf{B} = (B_1, \ldots, B_K)$. For fixed $z \in \mathbb{C}_+$ and p.s.d. matrices $B_r$ of dimension $n$, we denote 
\begin{align*}
   \textbf{V}(\textbf{B}) &:= \bigg(I_n + c_n\sum_{s=1}^K\mathbb{E}h_{sn}(z)B_{s}\bigg)^{-1},\\
  U_r(\textbf{B}) &:= \frac{1}{n}\operatorname{trace}\bigg(B_{r}\textbf{V}(\textbf{B})\bigg).
\end{align*}
With this, we have the following simpler expression for $\bar{Q}(z)$ as follows:
$$\bar{Q}(z) = \bigg(-zI_p + \sum_{r=1}^K U_r(z,\textbf{B})A_r\bigg)^{-1}.$$
With $r_0,s_0 \in [K]$ from \textbf{C3}, we define an analogous version of $\bar{Q}(z)$ as follows:
\begin{align*}
        R_{r_0,s_0}(z) &:= \bigg(-zI_p + \sum_{r=1}^K A_{r_0 \rightarrow r} \bigg(\frac{1}{n}\operatorname{trace}\{B_{s_0 \rightarrow r}[I_n + c_n\sum_{s=1}^K\mathbb{E}h_{sn}(z)B_{s_0 \rightarrow s}]^{-1}\}\bigg) \bigg)^{-1}\\
        &= \bigg(-zI_p + \sum_{r=1}^K U_r(\textbf{B}_{s_0 \rightarrow})A_{r_0\rightarrow r}\bigg)^{-1}\\
        &= \bigg(-zI_p + \sum_{r=1}^K A_{r_0 \rightarrow r} \int \frac{\theta_r dG_n(\boldsymbol{\theta})}{1 + c_n \boldsymbol{\theta}^T \mathbb{E}\textbf{h}_n(z)}\bigg)^{-1}.
\end{align*}

Fixing $r_0,s_0 \in [K]$ from C3, we will now refer to $R_{r_0,s_0}(\cdot)$ as $R(\cdot)$. Since $\textbf{A}_{r_0\rightarrow}$ and $\textbf{B}_{s_0\rightarrow}$ satisfy \textbf{T3}, $R(z)$ is an ideal candidate for a deterministic  equivalent of $Q(z)$ and it is clear that, using it instead of $\bar{Q}(z)$ will lead exactly to the same conclusions of Theorem \ref{mainTheorem}. Thus, it suffices to show that under \textbf{C3} and \textbf{A1} of Assumptions \ref{A12}, the following holds:
\begin{align}\label{C3_result}
    \absmod{\frac{1}{p}\operatorname{trace}(\bar{Q}(z) - R(z))} \rightarrow 0.
\end{align}
Let $0 < v = \Im(z)$. The proof of (\ref{C3_result}) depends on the following interim results.
\begin{enumerate}
      \item $|U_r(\textbf{B}_{s_0\rightarrow})|$ is uniformly (in $n$) bounded as a function of $z$ and $\tau$ from \textbf{A1}. More specifically,
      \begin{align}\label{C3_interim_result_1}
          |U_r(\textbf{B}_{s_0 \rightarrow})| \leq \frac{|z|\tau}{v}.
      \end{align}
    \item The second result we need is the following: \begin{align}\label{C3_interim_result_2}
        \absmod{U_r(\textbf{B}_{s_0\rightarrow}) - U_r(\textbf{B})} \rightarrow 0 \text{ as } n \rightarrow \infty.
    \end{align}
    \item  The third result is as follows:
    \begin{align}\label{C3_interim_result_3}
        \max\{||R(z)||_{op}, ||\bar{Q}(z)||_{op}\} \leq \frac{1}{v}.
    \end{align}
\end{enumerate}

By (\ref{R0}), (\ref{C3_interim_result_1}),  (\ref{C3_interim_result_2}) and (\ref{C3_interim_result_3}) and \textbf{C3} we observe that 
\begin{align*}
    &\absmod{\frac{1}{p}\operatorname{trace}(\bar{Q}(z) - R(z))}\\
    =&\, \absmod{\frac{1}{p}\operatorname{trace}\bigg(R(z)\bar{Q}(z) \sum_{r=1}^K[U_r(\textbf{B}_{s_0\rightarrow})A_{r_0\rightarrow r} - U_r(\textbf{B})A_r]\bigg)}\\
    \leq&\, ||R(z)\bar{Q}(z)||_{op} \sum_{r=1}^K\frac{1}{\sqrt{p}}||U_r(\textbf{B}_{s_0\rightarrow})A_{r_0\rightarrow r} - U_r(\textbf{B})A_r||_F\\
    \leq& \, \frac{1}{v^2}\sum_{r=1}^K|U_r(\textbf{B}_{s_0\rightarrow})| \times \frac{1}{\sqrt{p}}||A_{r_0\rightarrow r} - A_r||_F + \frac{1}{v^2} \sum_{r=1}^K  \absmod{U_r(\textbf{B}_{s_0\rightarrow}) - U_r(\textbf{B})} \times \frac{1}{\sqrt{p}}||A_{r}||_F
    \rightarrow 0.
\end{align*}

Here we used the fact that $||M||_F \leq ||M||_{op}||M||_*$ and $||A_r||_{op},||A_{r_0 \rightarrow r}||_{op} \leq \tau$ by \textbf{A1}.

\textbf{Proof of (\ref{C3_interim_result_1}):} Since $h_{sn}(\cdot)$ is the Stieltjes transform of some \textit{positive} measure, we have $\Im(zh_{sn}(z)) \geq 0$ for any $s\in[K], z\in \mathbb{C}_+$. Therefore, we have
\begin{align*}
    |U_r(\textbf{B}_{s_0\rightarrow})| &= \absmod{\frac{1}{n}\operatorname{trace}\bigg(B_{s_0\rightarrow} [I_n + \sum_{s=1}^K \mathbb{E}(c_nh_{sn})B_{s_0\rightarrow s}]^{-1}\bigg)}\\
    &= \absmod{\int\frac{\theta_rdG_n(\boldsymbol{\theta})}{1  + c_n \langle \mathbb{E}\textbf{h}_n, \boldsymbol{\theta}\rangle}}\\
    &\leq \absmod{\int \frac{|z|\theta_rdG_n(\boldsymbol{\theta})}{|z + c_nz\sum_{s=1}^K\mathbb{E}h_{sn}\theta_s|}}\\
    &\leq \frac{|z|\tau}{|\Im(z) + c_nz\sum_{s=1}^K\theta_s\Im(\mathbb{E}h_{sn})|} \leq \frac{|z|\tau}{v}.
\end{align*}
\textbf{Proof of (\ref{C3_interim_result_2}):} Note that
\begin{align*}
    &\absmod{U_r(\textbf{B}_{s_0\rightarrow}) - U_r(\textbf{B})}\\
    =& \absmod{\frac{1}{n}\operatorname{trace}\bigg(B_r(V(\textbf{B}_{s_0 \rightarrow}) - V(\textbf{B}))\bigg)} + \absmod{\frac{1}{n}\operatorname{trace}\bigg((B_{s_0\rightarrow r} - B_r)V(\textbf{B}_{s_0\rightarrow})\bigg)} = I + II.
\end{align*}

We claim that $||V(\textbf{B}_{s_0\rightarrow})||_{op} \leq |z|/v$. Let $\{\theta_{sj}\}_{j=1}^n$ denote the eigenvalues of $B_{s_0\rightarrow s}$ (and also $B_s$). Therefore, for any $j \in [n]$, we have 
$$\absmod{\frac{1}{1 + c_n\sum_{s=1}^K\mathbb{E}h_{sn}\theta_{sj}}} \leq \frac{|z|}{|\Im(z) + c_n\sum_{s=1}^K\theta_{sj}\mathbb{E}h_{sn}|} \leq \frac{|z|}{v}.$$
Thus, the term II in the above expansion goes to $0$ on using \textbf{C3} and basic inequalities related to traces of matrices. Similarly, we note that
$V(\textbf{B}) = z(zI_n + c_n\sum_{s=1}^K \mathbb{E}(zh_{sn}) B_s)^{-1}$. Again using the fact that $\Im(zh_{sn}(z)) \geq 0$, we observe that 
\begin{align*}
    ||V(\textbf{B})||_{op} \leq \frac{|z|}{|\Im(z + c_n\sum_{s=1}^K \mathbb{E}(zh_{sn}(z))B_s)|} \leq \frac{|z|}{v}.
\end{align*}

 Now, analyzing the term I, for large $n$, we have
\begin{align*}
    &\absmod{\frac{1}{n}\operatorname{trace}\bigg(B_r(V(\textbf{B}_{s_0 \rightarrow}) - V(\textbf{B}))\bigg)}\\
    =& \absmod{\frac{1}{n}\operatorname{trace}\bigg(V(\textbf{B})B_rV(\textbf{B}_{s_0\rightarrow}) \sum_{s=1}^K \mathbb{E}(c_nh_{sn}) (B_s - B_{s_0 \rightarrow s})\bigg)}\\
    =& ||V(\textbf{B})B_rV(\textbf{B}_{s_0\rightarrow})||_{op}\sum_{s=1}^K \frac{1}{\sqrt{n}} |\mathbb{E}(c_nh_{sn}(z))| \times ||B_s - B_{s_0\rightarrow s}||_F\\
    \leq& \, \tau \bigg(\frac{|z|}{v}\bigg)^2 \frac{2cC_0}{v} \sum_{s=1}^K\frac{1}{\sqrt{n}} ||B_s - B_{s_0 \rightarrow s}||_F \longrightarrow 0 \text{, by \textbf{C3}.}
\end{align*}

\textbf{Proof of (\ref{C3_interim_result_3}):}
We have
$$U_r(\textbf{B}_{s_0\rightarrow}) = \int\frac{\theta_rdG_n(\boldsymbol{\theta})}{1  + c_n \langle \mathbb{E}\textbf{h}_n, \boldsymbol{\theta}\rangle} = -zO_r(z, \mathbb{E}(c_n\textbf{h}_n), G_n),$$ 
as per the notation developed in (\ref{defining_Operator_O}). By Lemma \ref{im_of_z_times_Or_positive} and (\ref{C3_interim_result_2}), for large $n$, we have
\begin{align*}
    \bar{Q}(z)^{-1} = -(vI_p + \Pi) + \mathbbm{i} (-uI_p + \Psi),
\end{align*}
where $\Pi$ and $\Psi$ are p.s.d. matrices. Thus, we have $||\bar{Q}(z)||_{op}\leq 1/v$. The proof of $||R(z)||_{op}\leq 1/v$ easily follows, as the \textit{spatial and temporal components} of $R(z)$ commute.

For any $p \times p$ matrix $A$, by Cauchy-Schwarz we have 
$$|\operatorname{trace}(A)| \leq \sqrt{p\operatorname{trace}(A^*A)},$$
and when B is p.s.d, we have 
$$|\operatorname{trace}(AB)| \leq ||A||_{op}\operatorname{trace}(B).$$
We also used the fact that  $||A||_F^2 \leq ||A||_{op}||A||_*$.
\end{proof}

\subsection{Proof of Theorem \ref{pointMassTheorem}}\label{sec:proofpointMassTheorem}
\begin{proof}
The main result related to the L.S.D. holds as long as the innovation entries satisfy \textbf{T2}. Throughout this proof, we assume without loss of generality that the innovation entries are i.i.d. standard Gaussian. This is done, in particular, to access the results established in Corollary \ref{corollary_AB_identity}.

\textbf{Notation}: The innovation matrices $Z_n^{(r)}$ shall be denoted as $Z_r$ for simplicity. For $n \in \mathbb{N}$, denote $\mathcal{P}(n)$ to be the set of all the permutations of $[n]$. Let $\delta$ in $\mathcal{P}(p)$ and $U = \operatorname{diag}(u_{1}, \ldots, u_{p})$ be a diagonal matrix of order $p$. Then, we define
\begin{align*}
    U^\delta := \operatorname{diag}(u_{\delta_1},\ldots,u_{\delta_p}).
\end{align*}
We start with the observation that if $\{X_i\}_{i=1}^n$ are identically distributed non-negative random variables, then 
\begin{align}\label{Obs1}
    X_1 \overset{stoc}{\geq} \frac{1}{n}\sum_{i=1}^nX_i,
\end{align}
where, $\overset{stoc}{\geq}$ denotes the stochastic ordering. 

Let $Z_r;r\in [K]$ be some fixed realization of the innovation matrices and $\textbf{x} \in \mathbb{C}^p$ be such that $||\textbf{x}||=1$. Let $\mathcal{D} := (\mathbb{C}^{p \times p})^K \times (\mathbb{C}^ {n \times n})^K$ and define the mapping
$$L: \mathcal{D} \rightarrow \mathbb{C}^n \quad L((U_r,V_r)_{r=1}^K) := \frac{1}{\sqrt{n}}\sum_{r=1}^K V_rZ_r^*U_r\textbf{x}.$$

Clearly, $L$ is linear in $U_r,V_r$. Then, $QF((A_r,B_r)_{r=1}^K) := \textbf{x}^*S_n\textbf{x} = L^*L$ is jointly convex in $U_r,V_r,r\in [K]$. We now state a few distributional equivalence relationships.
\begin{enumerate}
    \item For $r\in [K]$, let $\delta_r \in \mathcal{P}(p)$ and $\sigma_r \in \mathcal{P}(n)$. Then we have $V_rZ_r^*U_r \overset{d}{=} V_r^{\sigma_r} Z_r^*U_r^{\delta_r}$.
    \item Combining everything, we have
    $$QF(U_1,\ldots,U_K,V_1,\ldots,V_K) \overset{d}{=} QF(U_1^{\delta_1},\ldots,U_K^{\delta_K},V_1^{\sigma_1},\ldots,V_K^{\sigma_K}).$$
\end{enumerate}
For $r \in [K]$, let $\bar{U}_r = \bigg(\frac{1}{p}\operatorname{trace}(U_r)\bigg)I_p$ and $\bar{V}_r = \bigg(\frac{1}{n}\operatorname{trace}(V_r)\bigg)I_n$. Therefore,
\begin{align}\label{stochastic_dominance}
\textbf{x}^*S_n\textbf{x}&=QF((U_{r}, V_{r})_{r=1}^K)\\\notag
&\overset{stoc}{\geq} \frac{1}{(p!)^K(n!)^K}\sum_{\delta_i \in \mathcal{P}(p)}\sum_{\sigma_j \in \mathcal{P}(n)} QF(U_1^{\delta_1},\ldots,U_K^{\delta_K}, V_1^{\sigma_1},\ldots, V_K^{\sigma_K})\\ \notag
&\geq \frac{1}{(p!)^K}\sum_{\delta_i \in \mathcal{P}(p)}QF(U_{\pi_1}^{\delta_1},\ldots,U_{\pi_K}^{\delta_K}, \bar{V}_1,\ldots,\bar{V}_K)\\\notag
&\geq QF(\bar{U}_1,\ldots,\bar{U}_K,\bar{V}_1,\ldots,\bar{V}_K).
\end{align}
where the first inequality follows due to (\ref{Obs1}) and the subsequent inequalities follow from applying convexity separately in each of the $2K$ coordinates of $QF(\cdot)$. Due to the conditions on $H$ and $G$ imposed in Theorem \ref{mainTheorem}, there exists $\epsilon>0$ such that $$\underset{n \rightarrow \infty}{\liminf}\underset{r \in [K]}{\min}\bigg\{\frac{1}{p}\operatorname{trace}(U_r), \frac{1}{n}\operatorname{trace}(V_r)\bigg\} \geq \epsilon.$$

By Corollary \ref{corollary_AB_identity}, when $0 < c < 1$, we have
\begin{align*}
    &\underset{n \rightarrow \infty}{\liminf}\,QF(\bar{U}_1,\ldots,\bar{U}_K, \bar{V}_1,\ldots,\bar{V}_K)\\
    \geq&\,
    \underset{n \rightarrow \infty}{\liminf}\, QF(\epsilon I_p,\ldots,\epsilon I_p, \epsilon I_n,\ldots,\epsilon I_n) \geq K
\epsilon^2(1-\sqrt{c})^2 > 0.                                                                      
\end{align*}
Thus when $0<c < 1$, for any fixed $\textbf{x} \in \mathbb{C}^p$, we must have $x^*S_nx \overset{stoc}{\geq} \epsilon^2 (1-\sqrt{c})^2$. 
This implies that the smallest eigen value is bounded away from $0$ in the limiting sense. As a result, we have $F_c(\{0\}) = 0$ where the L.S.D. is denoted as $F_c$ to emphasize the role of $c>0$. Let $F_c^{\bar{S}_n} \xrightarrow{d} \bar{F}_c$ where $\bar{F}_c$ denotes the L.S.D. of the dual sample covariance matrix $\bar{S}_n$. Utilizing the relationship between $F_c$ and $\bar{F}_c$, we have
\begin{align}\label{F_Fbar_relationship}
    F_c(\{0\}) = \bigg(1 - \frac{1}{c}\bigg)\delta_0(\{0\}) + \frac{1}{c}\bar{F}_c(\{0\}) \implies \bar{F}_c(\{0\}) = 1 - c.
\end{align}
This establishes the result when $c < 1$. When $c > 1$, by interchanging the roles of $(S_n, \bar{S}_n)$ and $(c ,\frac{1}{c})$, we get $\bar{F}_c(\{0\})=0$ and therefore from (\ref{F_Fbar_relationship}), we have $F_c(\{0\}) = 1-1/c$.

Now we address the case when c = 1. From (\ref{stochastic_dominance}), we have 
\begin{align}\label{stoch_domi}
    x^*S_nx \overset{stoc}{\geq} QF(\epsilon I_p,\ldots,\epsilon I_p, \epsilon I_n,\ldots, \epsilon I_n).
\end{align}

Let $T_n$ be the sample covariance matrix when $A_n^{(r)}=\epsilon I_p$ and $B_n^{(r)} = \epsilon I_n$ for all $r \in [K]$. Since (\ref{stoch_domi}) holds for any $||\textbf{x}|| = 1$, stochastically we must have $S_n \underset{l\ddot{o}wner}{\geq} T_n$ or $S_n - T_n$ is stochastically a p.s.d matrix. Therefore, $F^{S_n}(t) \leq F^{T_n}(t)$ for any $t \in \mathbb{R}$. Recall that $F^{S_n} \xrightarrow{d} F^{c=1}$ and from Corollary \ref{corollary_AB_identity}, we have $F^{T_n} \xrightarrow{d} F^{MP,c=1}$ where $F^{MP,c=1}$ denotes the Marchenko-Pastur type limiting law for the given set of scaling matrices. If $X_n \overset{stoc}{\geq} Y_n$, $X_n \xrightarrow{d} X$ and $Y_n \xrightarrow{d} Y$, then $X \overset{stoc}{\geq} Y$. In particular, $F_X(t) \leq F_Y(t)$ for any $t \in \mathbb{R}$. Thus, $F^{c=1}(0,a) \leq F^{MP,c=1}(0, a) \rightarrow 0$ as $a \rightarrow 0$. The convergence to $0$ follows since the Marchenko-Pastur type law derived in Corollary \ref{corollary_AB_identity} exhibits $1/\sqrt{t}$ behavior when $t \approx 0$. This proves the result for $c = 1$.
\end{proof}

\begin{lemma}\label{Lemma_c_geq_1}
    In Theorem \ref{pointMassTheorem}, when $c\geq 1$, we must have $\underset{\epsilon \downarrow 0}{\lim}\,h_{r_0}(\mathbbm{i}\epsilon) =\infty$ for some $r_0 \in [K]$.
\end{lemma}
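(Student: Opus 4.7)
The plan is to exploit the alternate characterization (\ref{s_alt_char_1}) together with the atom value $F(\{0\}) = 1 - 1/c$ from Theorem \ref{pointMassTheorem} to show that the scalar integral $\int dG(\boldsymbol{\theta})/(1 + c\boldsymbol{\theta}^T \mathbf{h}(\mathbbm{i}\epsilon)) \to 0$ as $\epsilon \downarrow 0$, and then derive a contradiction under the assumption that every $h_r(\mathbbm{i}\epsilon)$ stays bounded. Multiplying (\ref{s_alt_char_1}) by $-z$ rewrites it as $-z s(z) = (1 - 1/c) + (1/c)\int dG(\boldsymbol{\theta})/(1 + c\boldsymbol{\theta}^T\mathbf{h}(z))$. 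Setting $z = \mathbbm{i}\epsilon$ and letting $\epsilon \downarrow 0$, the left side tends to $F(\{0\})$ by the standard identity $\lim_{\epsilon \downarrow 0}(-\mathbbm{i}\epsilon) s(\mathbbm{i}\epsilon) = F(\{0\})$ (dominated convergence applied to $\int (-\mathbbm{i}\epsilon)/(x - \mathbbm{i}\epsilon) dF(x)$, whose integrand is bounded by $1$ and converges pointwise to $\mathbbm{1}_{\{x=0\}}$ on $\mathbb{R}_+$), and Theorem \ref{pointMassTheorem} identifies this limit with $1 - 1/c$. The two $1 - 1/c$ terms cancel, yielding the desired vanishing of the integral.

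For the contradiction, suppose $\sup_{r \in [K]} |h_r(\mathbbm{i}\epsilon)|$ is uniformly bounded as $\epsilon \downarrow 0$. By Remark \ref{h_rn_is_a_ST} together with Theorem \ref{CompactConvergence}, each $h_r$ is the Stieltjes transform of a positive measure $\nu_r$ on $\mathbb{R}_+$, so $\Re h_r(\mathbbm{i}\epsilon) = \int x (x^2 + \epsilon^2)^{-1} d\nu_r(x) \geq 0$. Hence, for every $\boldsymbol{\theta} \in \mathbb{R}_+^K$, $\Re(1 + c\boldsymbol{\theta}^T \mathbf{h}(\mathbbm{i}\epsilon)) \geq 1$ and therefore $|1/(1 + c\boldsymbol{\theta}^T \mathbf{h}(\mathbbm{i}\epsilon))| \leq 1$. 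Extract a subsequence $\epsilon_k \downarrow 0$ along which $\mathbf{h}(\mathbbm{i}\epsilon_k) \to \mathbf{h}^* = \alpha + \mathbbm{i}\beta$ coordinatewise, with $\alpha, \beta \in \mathbb{R}_+^K$ obtained by passing to the limit of $\Re h_r$ and $\Im h_r$. Dominated convergence then gives $\int dG/(1 + c\boldsymbol{\theta}^T \mathbf{h}^*) = 0$. But the real part of the integrand equals $(1 + c\boldsymbol{\theta}^T\alpha)/|1 + c\boldsymbol{\theta}^T \mathbf{h}^*|^2$, which is strictly positive at every $\boldsymbol{\theta}$ in the support of $G$ (the numerator is at least $1$ and the denominator is finite), so the real part of the limit integral is strictly positive, contradicting the vanishing.

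The boundedness assumption thus fails, producing some $r_0 \in [K]$ and a sequence $\epsilon_k \downarrow 0$ along which $|h_{r_0}(\mathbbm{i}\epsilon_k)| \to \infty$. The main obstacle in matching the full statement $\lim_{\epsilon\downarrow 0} h_{r_0}(\mathbbm{i}\epsilon) = \infty$ is upgrading this subsequential unboundedness to a true limit, i.e.\ ruling out oscillation between large and moderate values. The plan is to exploit that $\Re h_{r_0}(\mathbbm{i}\epsilon) = \int x (x^2 + \epsilon^2)^{-1} d\nu_{r_0}$ is monotone non-decreasing in $1/\epsilon$, so it has a limit in $[0, \infty]$: if this limit is $+\infty$, then $|h_{r_0}(\mathbbm{i}\epsilon)| \to \infty$ along every sequence. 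Otherwise $\Re h_{r_0}$ is bounded and the divergence must lie in $\Im h_{r_0}$, in which case the Stieltjes--Perron identification $\epsilon\, \Im h_{r_0}(\mathbbm{i}\epsilon) \to \nu_{r_0}(\{0\})$, combined with the fixed-point identity $-\mathbbm{i}\epsilon h_{r_0}(\mathbbm{i}\epsilon) = \int \lambda_{r_0} dH(\boldsymbol{\lambda})/(1+\boldsymbol{\lambda}^T\mathbf{g}(\mathbbm{i}\epsilon))$ coming from (\ref{h_main_equation})--(\ref{hg_recursive}) and the dual analogue $\int dH/(1+\boldsymbol{\lambda}^T\mathbf{g}(\mathbbm{i}\epsilon)) \to 1-1/c \neq 1$ (derived from (\ref{s_alt_char_2}) in the same way), forces $\nu_{r_0}(\{0\}) > 0$, giving $\Im h_{r_0}(\mathbbm{i}\epsilon) \sim \nu_{r_0}(\{0\})/\epsilon \to \infty$.
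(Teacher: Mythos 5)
Your core contradiction argument matches the paper's. Both start from $\lim_{\epsilon\downarrow 0}(-\mathbbm{i}\epsilon)s(\mathbbm{i}\epsilon)=F(\{0\})=1-1/c$ together with (\ref{s_alt_char_1}) to conclude $\int dG(\boldsymbol{\theta})/(1+c\boldsymbol{\theta}^T\mathbf{h}(\mathbbm{i}\epsilon))\to 0$, and both then contradict boundedness of $\mathbf{h}(\mathbbm{i}\epsilon)$ by showing this integral cannot vanish: the paper lower-bounds $\int dG/|1+c\boldsymbol{\theta}^T\mathbf{h}(\mathbbm{i}\epsilon)|$ uniformly in $\epsilon$ on a ball of positive $G$-mass (using $\Re f\geq 0$, $\Im f\leq 0$ to convert $\int f\to 0$ into $\int |f|\to 0$), whereas you pass to a subsequential limit $\mathbf{h}^*$ and argue the real part of the limit integral is strictly positive. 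These are the same idea packaged differently, and your version is correct (though the subsequence extraction is not strictly needed).

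Your final paragraph addresses a point the paper leaves tacit, namely upgrading ``unbounded'' to $\lim_{\epsilon\downarrow 0}h_{r_0}(\mathbbm{i}\epsilon)=\infty$, and identifying the monotonicity of $\Re h_{r_0}(\mathbbm{i}\epsilon)$ is the right first move. However, the argument you give in the bounded-$\Re$ case does not cohere. The fixed-point identity gives $\epsilon\,\Im h_{r_0}(\mathbbm{i}\epsilon)=\Re\int\lambda_{r_0}\,dH(\boldsymbol{\lambda})/(1+\boldsymbol{\lambda}^T\mathbf{g}(\mathbbm{i}\epsilon))$, while the ``dual analogue'' controls $\int dH/(1+\boldsymbol{\lambda}^T\mathbf{g}(\mathbbm{i}\epsilon))$ \emph{without} the $\lambda_{r_0}$ weight; nothing in the cited facts transfers positivity from the unweighted integral to the weighted one, so the stated chain does not ``force $\nu_{r_0}(\{0\})>0$''. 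The correct and much simpler closing is: if $\Re h_{r_0}(\mathbbm{i}\epsilon)\uparrow L<\infty$, then $\int_{x>0}x^{-1}\,d\nu_{r_0}(x)=L<\infty$, hence by dominated convergence (dominating by $(2x)^{-1}$ on $x>0$) the part of $\Im h_{r_0}(\mathbbm{i}\epsilon)=\nu_{r_0}(\{0\})/\epsilon+\int_{x>0}\epsilon\,d\nu_{r_0}(x)/(x^2+\epsilon^2)$ coming from $x>0$ vanishes; if additionally $\nu_{r_0}(\{0\})=0$ then $h_{r_0}(\mathbbm{i}\epsilon)\to L$ and is bounded, contradicting what the first part established, so $\nu_{r_0}(\{0\})>0$ and $|h_{r_0}(\mathbbm{i}\epsilon)|\geq\Im h_{r_0}(\mathbbm{i}\epsilon)\to\infty$. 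Replace the fixed-point reasoning with this dichotomy and the proof is complete.
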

\begin{proof}
    Suppose not, then there exists $M > 0$ such that for all $r \in [K]$, $|h_{r}(\mathbbm{i}\epsilon)| \leq M$ for any $\epsilon > 0$. In that case, we have
\begin{align}
    \frac{1}{1 + cKM ||\boldsymbol{\theta}||_2} \leq \frac{1}{1 + c|\boldsymbol{\theta}^T\textbf{h}(\mathbbm{i}\epsilon)|} \leq \absmod{\frac{1}{1+c\boldsymbol{\theta}^T\textbf{h}(\mathbbm{i}\epsilon)}}.
\end{align}
Denoting $f(\epsilon; \boldsymbol{\theta}) = {(1 + c\boldsymbol{\theta}\textbf{h}(\mathbbm{i}\epsilon))}^{-1}$,
we observe that $\Re(f(\epsilon; \boldsymbol{\theta})) \geq 0$ and $\Im(f(\epsilon; \boldsymbol{\theta})) \leq 0$. In light of this, we have 
\begin{align*}
    \underset{\epsilon \downarrow 0}{\lim}\int \absmod{f(\epsilon; \boldsymbol{\theta})}dG(\boldsymbol{\theta}) = \underset{\epsilon \downarrow 0}{\lim}\int f(\epsilon; \boldsymbol{\theta})dG(\boldsymbol{\theta}) = 0.
\end{align*}
The last equality follows from (\ref{s_alt_char_1}) and Theorem \ref{pointMassTheorem}. Thus, when $c\geq 1$, we have 
\begin{align*}
    \underset{\epsilon \downarrow 0}{\lim}\int \frac{dG(\boldsymbol{\theta})}{1+c\boldsymbol{\theta}^T\textbf{h}(\mathbbm{i}\epsilon)} &= 0.
\end{align*} 
Define $R := \{||\boldsymbol{\theta}||_2 \leq r/M\}$ where $r>0$ is chosen such that $G(R) > 0$. This is possible due to the conditions on $G$. Finally, we arrive at a contradiction by shrinking $\epsilon$ to $0$ since the quantity on the right converges to $0$ as shown below:
\begin{align*}
    0< \frac{1}{1+crK} \times G(R) \leq \int_R \frac{dG(\boldsymbol{\theta})}{1 + cM ||\boldsymbol{\theta}||_2} &\leq \int_{R \cup R^c} \frac{dG(\boldsymbol{\theta})}{1 + cM ||\boldsymbol{\theta}||_2} \leq \int \frac{dG(\boldsymbol{\theta})}{\absmod{1+c\boldsymbol{\theta}^T\textbf{h}(\mathbbm{i}\epsilon)}}.
\end{align*}
\end{proof}

\subsection{Proof of Theorem \ref{thm:Continuity}}\label{sec:proofContinuity}
\begin{proof}
Fix $\epsilon>0$, $z \in \mathbb{C}_+$ and $H,G$ with unique solutions denoted by \textbf{h, g}. Let ($\tilde{H},\tilde{G}$) be any other pair of distribution functions with corresponding unique solutions given by $\tilde{\textbf{h}}, \tilde{\textbf{g}}$. We will show that if $L(H, \tilde{H})$, $L(G, \tilde{G})$ are sufficiently small, then $||\textbf{h}-\tilde{\textbf{h}}||_1$ and $||\textbf{g}-\tilde{\textbf{g}}||_1$ are also small.
\begin{align}
    |h_r - \tilde{h}_r|
    =&\, \frac{1}{|z|}\absmod{\int \frac{\lambda_r dH(\boldsymbol{\lambda})}{1 + \boldsymbol{\lambda}^T\textbf{g}} - \int \frac{\lambda_rd\tilde{H}(\boldsymbol{\lambda})}{1 + \boldsymbol{\lambda}^T\tilde{\textbf{g}}}}\\ \notag
    \leq&\, \frac{1}{|z|}\int \absmod{\frac{\lambda_r}{1 + \boldsymbol{\lambda}^T\textbf{g}}} d\{H(\boldsymbol{\lambda}) - \tilde{H}(\boldsymbol{\lambda})\} + |z|\int \absmod{\frac{\lambda_r}{-z(1 + \boldsymbol{\lambda}^T\textbf{g})} - \frac{\lambda_r}{-z(1 + \boldsymbol{\lambda}^T\tilde{\textbf{g}})}}d\tilde{H}(\boldsymbol{\lambda}) \\ \notag
    \leq&\, \frac{\epsilon}{|z|K} + \sum_{j=1}^K|g_j - \tilde{g}_j| \int \absmod{\frac{\lambda_r\lambda_j}{(-z -z \boldsymbol{\lambda}^T\textbf{g})(-z -z \boldsymbol{\lambda}^T\tilde{\textbf{g}})}}d\tilde{H}(\boldsymbol{\lambda}) \\ \notag
    \leq&\, \frac{\epsilon}{|z|K}  + \frac{D_0}{v^2}\sum_{j=1}^K|g_j - \tilde{g}_j|.
\end{align}
The last inequality follows as $$\absmod{\frac{\lambda_r}{1 + \boldsymbol{\lambda}^T\textbf{g}}} \leq \frac{1}{\Im(g_r)} < \infty.$$

By Lemma \ref{im_of_z_times_Or_positive}, we have $\Im(zg_k(z)) \geq 0$ and $\Im(z\tilde{g}_k(z))$ for any $k \in [K]$. Similarly, when $L(G, \tilde{G})$ is sufficiently small, the $j^{th}$ term in the second expression can be bounded above.
\begin{align}
    &|g_j - \tilde{g}_j|
    =\,\absmod{\int \frac{\theta_j dG(\boldsymbol{\theta})}{-z(1 + c\boldsymbol{\theta}^T\textbf{h})} - \int \frac{\theta_jd\tilde{G}(\boldsymbol{\theta})}{-z(1 + c\boldsymbol{\theta}^T\tilde{\textbf{h}}})}\\ \notag
    \leq&\, \frac{1}{|z|}\int \absmod{\frac{\theta_j}{1 + c\boldsymbol{\theta}^T\textbf{h}}} d\{G(\boldsymbol{\theta}) - \tilde{G}(\boldsymbol{\theta})\} + \int \absmod{\frac{\theta_j}{-z(1 + c\boldsymbol{\theta}^T\textbf{h})} - \frac{\theta_j}{-z(1 + c\boldsymbol{\theta}^T\tilde{\textbf{h}})}}d\tilde{G}(\boldsymbol{\theta}) \\ \notag
    \leq&\, \frac{1}{|z|}\int \absmod{\frac{\theta_j}{1 + c\boldsymbol{\theta}^T\textbf{h}}} d\{G(\boldsymbol{\theta}) - \tilde{G}(\boldsymbol{\theta})\} + \sum_{s=1}^K|z(h_s - \tilde{h}_s)| \int \absmod{\frac{\theta_j\theta_s}{(-z -z c\boldsymbol{\theta}^T\textbf{h})(-z -z c\boldsymbol{\theta}^T\tilde{\textbf{h}})}}d\tilde{G}(\boldsymbol{\theta}) \\ \notag
    \leq&\, \frac{\epsilon}{K|z|} + \frac{D_0}{v^2}\sum_{s=1}^K|z(h_s - \tilde{h}_s)| = \frac{\epsilon}{|z|K} + \frac{D_0|z|} {v^2}||\textbf{h} - \tilde{\textbf{h}}||_1.
\end{align}
Again, the first term goes to $0$ as $L(G, \tilde{G}) \rightarrow 0$ by DCT since $$\absmod{\frac{\theta_j}{1 + c\boldsymbol{\theta}^T\textbf{h}}} \leq \frac{1}{c\Im(h_j)} < \infty.$$
Combining everything, we get
\begin{align}
    |h_r - \tilde{h}_r| \leq& \frac{\epsilon}{|z|K} +  \frac{D_0}{v^2}\sum_{j=1}^K \bigg(\frac{\epsilon}{|z|K} + \frac{D_0|z|} {v^2}||\textbf{h} - \tilde{\textbf{h}}||_1\bigg) \\ \notag 
    \implies ||\textbf{h} - \tilde{\textbf{h}}||_1 \leq& \frac{\epsilon}{|z|} + \frac{KD_0}{v^2}\bigg(\frac{\epsilon}{|z|} + \frac{KD_0|z|}{v^2}||\textbf{h} - \tilde{\textbf{h}}||_1\bigg) \\ \notag
    =& \frac{\epsilon}{|z|}\bigg(1 + \frac{KD_0}{v^2}\bigg) + \frac{K^2D_0^2|z|}{v^4}||\textbf{h} - \tilde{\textbf{h}}||_1.
\end{align}

For every $\epsilon>0$, there exists $\delta>0$ such that the above holds whenever $$\max\{L(H, \tilde{H}), L(G, \tilde{G})\} < \delta.$$ For $\epsilon=1/n$, there exists $\tilde{H}_n, \tilde{G}_n, \tilde{\textbf{h}}_n, \tilde{\textbf{g}}_n$ such that the above inequality holds. Repeating the arguments in (\ref{h_rn_compact_bound}), $\{\tilde{\textbf{h}}_{n}\}$ has a convergent subsequence, say $\{\tilde{\textbf{h}}_{n_k}\}$ by \textit{Montel's Theorem}. But any subsequential limit satisfies the below.

\begin{align}\label{contraction_in_continuity}
    \underset{k \rightarrow \infty}{\lim}||\textbf{h}-\tilde{\textbf{h}}_{n_k}||_1 \leq \frac{K^2D_0^2|z|}{v^4}\underset{k \rightarrow \infty}{\lim}||\textbf{h} - \tilde{\textbf{h}}_{n_k}||_1.
\end{align}

Choose $z = u + \mathbbm{i}v$ with $|u| \leq v$ and $v\geq V_0$, where $V_0 := (2K^2D_0^2)^\frac{1}{3}$. Then, for such $z \in \mathbb{C}_+$, it is easy to see by a contraction argument that we must have $\underset{n \rightarrow \infty}{\lim}||\textbf{h}(z)-\tilde{\textbf{h}}_{n_k}(z)|| = 0$ for any subsequence. Therefore, we have $\underset{k \rightarrow \infty}{\lim}||\textbf{h}(z)-\tilde{\textbf{h}}_{n}(z)|| = 0$.
Similarly, we can show that $\underset{k \rightarrow \infty}{\lim}||\textbf{g}(z)-\tilde{\textbf{g}}_{n}(z)|| = 0$.
Now, noting that $\tilde{\textbf{h}}_n(\cdot), \textbf{h}(\cdot)$ are locally bounded (e.g. see proof of Theorem \ref{CompactConvergence}), the result can be extended to any $z \in \mathbb{C}_+$ by Theorem \ref{VitaliPorter}.
\end{proof}

\end{document}